\newtheorem{theorem}{Theorem}[section]
\newtheorem{lemma}[theorem]{Lemma}
\theoremstyle{definition}
\newtheorem{definition}[theorem]{Definition}
\newtheorem{remark}[theorem]{Remark}
\newtheorem{notation}[theorem]{Notation}
\newtheorem{example}[theorem]{Example}
\newcommand\cB{\mathcal{B}}
\newcommand\cD{\mathcal{D}}
\newcommand\cH{\mathcal{H}}
\newcommand\cM{\mathcal{M}}
\newcommand\cO{\mathcal{O}}
\newcommand\cX{\mathcal{X}}
\renewcommand\AA{\mathbb{A}}
\newcommand\CC{\mathbb{C}}
\newcommand\NN{\mathbb{N}}
\newcommand\PP{\mathbb{P}}
\newcommand\QQ{\mathbb{Q}}
\newcommand\RR{\mathbb{R}}
\newcommand\ZZ{\mathbb{Z}}
\newcommand\rH{\mathrm{H}}
\newcommand\rT{\mathrm{T}}
\newcommand{\frakm}{\mathfrak{m}}
\newcommand{\frakq}{\mathfrak{q}}
\newcommand{\phiv}{\varphi}
\newcommand{\inv}{^{-1}}
\newcommand{\into}{\hookrightarrow} 
\newcommand{\onto}{\twoheadrightarrow} 
\DeclareMathOperator{\rank}{rank} 
\DeclareMathOperator{\Hom}{Hom} 
\DeclareMathOperator{\Ext}{Ext}
\DeclareMathOperator{\Spec}{Spec} 
\DeclareMathOperator{\Proj}{Proj} 
\DeclareMathOperator{\QCoh}{QCoh} 
\def\conv#1{\mathrm{conv} \left\langle #1  \right\rangle} 
\newcommand{\vertices}{\mathrm{vert}} 
\def\cone#1{\mathrm{cone} \left\langle #1 \right\rangle}
\newcommand{\rec}{\mathrm{rec}} 
\newcommand{\mut}{\mathrm{mut}} 
\def\toric#1#2{\mathrm{TV}_{#1}(#2)} 
\newcommand{\interior}{\mathrm{int}} 
\title{Homogeneous deformations of toric pairs}
\author{Andrea Petracci}
\address{Institut f\"ur Mathematik, Freie Universit\"at Berlin, Arnimallee 3, 14195 Berlin, Germany}
\email{andrea.petracci@fu-berlin.de}
\begin{document}

\begin{abstract}
We extend the Altmann--Mavlyutov construction of homogeneous deformations of affine toric varieties to the case of toric pairs $(X, \partial X)$, where $X$ is an affine or projective toric variety and $\partial X$ is its toric boundary. As an application, we generalise a result due to Ilten to the case of Fano toric pairs.
\end{abstract}

\maketitle

\section{Introduction}

An important trend in modern algebraic geometry is to study pairs consisting of a variety with a divisor. Recent work by Gross--Hacking--Keel~\cite{ghk} suggests that Mirror Symmetry, which was originally formulated for Calabi--Yau varieties, is better understood as a correspondence between \emph{log Calabi--Yau pairs}, i.e.\ pairs $(X, B)$ where $X$ is a variety and $B$ is an effective divisor such that $K_X + B$ is linearly trivial. \emph{Toric pairs} -- that is, pairs $(X,\partial X)$ where $X$ is a toric variety with toric boundary $\partial X$ -- are one of the simplest examples of log Calabi--Yau pairs, and can be understood to lie at the boundary of the moduli space of log Calabi--Yau pairs. It is therefore interesting to understand deformations of toric pairs in this setting.

The aim of this paper is to construct deformations of toric pairs via combinatorial methods of toric geometry, by generalising the constructions due to Altmann \cite{altmann_minkowski_sums, altmann_one_parameter_families} and Mavlyutov \cite{mavlyutov}. The deformations we construct are \emph{homogeneous} with respect to the action of the torus (see Remark~\ref{rmk:homogeneous}) and unobstructed.

After surveying the work of Mavlyutov \cite{mavlyutov} on deformations of affine toric varieties, we extend his construction to deformations of affine toric pairs (Theorem~\ref{thm:deformations_affine_toric_pairs_easy}).
More precisely, if $X$ is an affine toric variety without torus factors and $\partial X$ is its toric boundary, then from some combinatorial input (which we call $\partial$-deformation datum) we construct a formal deformation of the closed embedding $\partial X \into X$ over a power series ring in finitely many variables over $\CC$.  The construction of the deformation is achieved by constructing an affine toric variety $\tilde{X}$ and a closed embedding $X \into \tilde{X}$ and by deforming the equations of this closed embedding.

By applying Proj to this construction, we also construct deformations of projective toric pairs (Theorem~\ref{thm:deformations_projective_toric_pairs_easy}). Both in the affine and in the projective case, the deformations we construct lie inside a bigger toric variety $\tilde{X}$ and are explicit in terms of Cox coordinates of $\tilde{X}$; therefore, in specific examples, it is easy to check if we get smoothings.

Finally, we apply our construction of deformations of projective toric pairs to a particular case which arises in the study of Mirror Symmetry for Fano varieties \cite{mirror_symmetry_and_fano_manifolds}: in this way we are able to reprove and extend an important result of Ilten \cite{ilten_sigma} about families of Fano varieties coming from a combinatorial procedure on Fano polytopes called ``mutation'' (Theorem~\ref{thm:mutations_induce_deformations_easy}).

Now we give a more detailed account of what we do.

\subsection{Minkowski decompositions and deformations of affine toric pairs}

Let $\sigma$ be a full dimensional strongly convex rational polyhedral cone inside a lattice $N$ and let $X = \toric{\CC}{\sigma}$ be the affine toric variety over $\CC$ associated to $\sigma$. Klaus Altmann has extensively studied the deformation theory of $X$. In \cite{altmann_computation_tangent} he computes the tangent space $\rT^1_X$ to deformations of $X$. In \cite{altmann_versal_deformation} he describes the miniversal deformation of $X$ when it is an isolated Gorenstein singularity. In \cite{altmann_minkowski_sums} he notices that Minkowski decompositions of a polyhedron inside $\sigma$, under some hypotheses, induce certain  deformations of $X$;
for example, the two Minkowski decompositions of the standard hexagon (Figure~\ref{fig:esagono}) \begin{figure}
\centering
\def\svgwidth{10cm}
\input{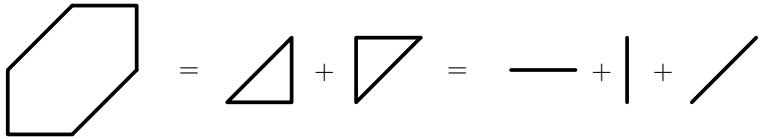}
\caption{The two Minkowski decompositions of the hexagon \label{fig:esagono}}
\end{figure} induce two different deformations of the anticanonical affine cone over the smooth del Pezzo surface of degree $6$. In \cite{altmann_one_parameter_families} he constructs deformations of $X$ from Minkowski decompositions of more general polyhedra inside the cone $\sigma$.

In \cite{mavlyutov} Anvar Mavlyutov gives a unified description of all Altmann's deformations thanks to the use of Cox coordinates. His construction has the same strategy as Altmann's: starting from a Minkowski decomposition of some polyhedron (with some assumptions) one embeds the considered affine toric variety into a larger affine toric variety and then deforms the equations of this closed embedding. More specifically, starting from a Minkowski decomposition of a polyhedron inside the cone $\sigma$ one can construct a bigger cone $\tilde{\sigma}$ in a bigger lattice $\tilde{N}$ and embed the toric variety $X$ associated to $\sigma$ inside the toric variety $\tilde{X} = \toric{\CC}{\tilde{\sigma}}$ associated to $\tilde{\sigma}$ via binomial equations in the Cox coordinates of $\tilde{X}$; by deforming these binomial equations with extra monomials one may produce a deformation of $X$. The precise statement is a theorem of Mavlyutov \cite{mavlyutov} and is rewritten in \S\ref{sec:deformations_affine_mavlyutov} together with a detailed proof. There the Minkowski decomposition is encoded in the notion of a deformation datum (see Definition~\ref{def:deformation_datum}).

We have noticed that Mavlyutov's construction can be applied also to deform the affine toric pair $(X, \partial X)$, where $\partial X$ is the toric boundary of $X$. More precisely, we construct a formal deformation of the closed embedding $\partial X \into X$ over a power series ring $A$ in finitely many variables over $\CC$. By a \emph{formal deformation} of $\partial X \into X$ over $A$ we mean a collection made up of a commutative diagram
\[
\xymatrix{
B_n \ar@{^(->}[r] \ar[rd] & X_n \ar[d] \\
& \Spec A/\frakm_A^{n+1}
}
\]
for each $n \in \NN$, where $\frakm_A$ is the maximal ideal of $A$, $B_n \into X_n$ is a closed embedding, $B_n$ and $X_n$ are flat over $A / \frakm_A^{n+1}$, and all these diagrams are required to be compatible in the following way: the $0$th diagram is just the embedding $\partial X \into X$ over $\Spec \CC$, and the base change of the $(n+1)$th diagram along the closed embedding induced by $A/\frakm_A^{n+2} \onto A/\frakm_A^{n+1}$ is isomorphic to the $n$th diagram.

Our main theorem, which significantly rests on \cite{mavlyutov}, is the following.

\begin{theorem} \label{thm:deformations_affine_toric_pairs_easy}
Let $X$ be an affine toric variety without torus factors and let $\partial X$ be its toric boundary.
Given a Minkowski decomposition of a polyhedron satisfying certain conditions, one can construct a formal deformation of the pair $(X, \partial X)$ over a power series ring in finitely many variables over $\CC$. (See Theorem~\ref{thm:deformations_affine_toric_pairs} for the precise statement.)
\end{theorem}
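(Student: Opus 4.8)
The plan is to build on Mavlyutov's construction of deformations of the affine toric variety $X$ itself, which is recalled (with proof) in \S\ref{sec:deformations_affine_mavlyutov}, and to carefully keep track of the toric boundary throughout that construction. The starting point is a Minkowski decomposition of a suitable polyhedron sitting inside the cone $\sigma$ defining $X = \toric{\CC}{\sigma}$; this data (a \emph{$\partial$-deformation datum}, refining the notion of deformation datum in Definition~\ref{def:deformation_datum}) produces a larger lattice $\tilde{N}$, a larger cone $\tilde{\sigma}$, and hence a larger affine toric variety $\tilde{X} = \toric{\CC}{\tilde\sigma}$, together with a closed embedding $X \into \tilde X$ cut out by explicit binomial equations in the Cox coordinates of $\tilde X$. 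First I would verify that this closed embedding is compatible with the toric boundaries, i.e. that $\partial X = X \cap \partial \tilde X$ scheme-theoretically; this should follow because the embedding is torus-equivariant and the binomials defining $X$ inside $\tilde X$ restrict, on each torus-invariant stratum of $\tilde X$, to the binomials defining the corresponding stratum of $X$.

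Next I would write down the deformed equations. Mavlyutov deforms each defining binomial of $X \subseteq \tilde X$ by adding extra monomials (parametrised by the new variables of the power series ring $A$), producing a flat family $X_n \to \Spec A/\frakm_A^{n+1}$ for each $n$, compatible in $n$, hence a formal deformation of $X$. The key new point is to show that these \emph{same} extra monomials can be chosen so that they only involve Cox coordinates of $\tilde X$ corresponding to rays \emph{not} on the relevant boundary divisors; then, restricting the deformed equations to the appropriate coordinate subspace of $\tilde X$ (the one cutting out $\partial \tilde X$, or rather the stratum closure corresponding to $\partial X$), one obtains a flat family $B_n \to \Spec A/\frakm_A^{n+1}$ deforming $\partial X$, sitting inside $X_n$ as a closed subscheme. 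Flatness of $B_n$ should be checked either by exhibiting an explicit regular sequence / Gröbner-style argument on the binomial-plus-correction equations, or by invoking the same lifting-of-relations criterion that Mavlyutov uses for $X_n$, applied stratum by stratum to the toric boundary. Assembling the diagrams $B_n \into X_n \to \Spec A/\frakm_A^{n+1}$ and checking the compatibility under $A/\frakm_A^{n+2} \onto A/\frakm_A^{n+1}$ — which is immediate since everything is defined by the same formulas truncated at different orders — yields the desired formal deformation of the pair $(X,\partial X)$.

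The hypothesis that $X$ has no torus factors is used to guarantee that $\tilde X$ is well-behaved (in particular that the Cox ring construction applies cleanly and the rays of $\tilde\sigma$ behave as expected), exactly as in Mavlyutov's setting; I would simply carry this assumption through. The main obstacle I anticipate is the flatness of the boundary family $B_n$: restricting a flat family to a torus-invariant subscheme need not preserve flatness in general, so one genuinely has to use the special form of the deformed equations. Concretely, the issue is to show that the extra monomials appearing in the deformation of each binomial $f$ do not create unwanted relations after setting the boundary Cox coordinates to zero — equivalently, that the initial terms with respect to a suitable weighting still generate the initial ideal, so that the flat limit argument (semicontinuity of Hilbert function, or the Bayer–type criterion) goes through on the boundary stratum as well as on $X$ itself. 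I expect this to reduce, via the combinatorics of the Minkowski decomposition, to a statement already essentially present in \cite{mavlyutov}, applied to the faces of $\sigma$; making that reduction precise, and pinning down exactly which conditions on the Minkowski decomposition are needed (these become the extra axioms of a $\partial$-deformation datum), is the technical heart of the argument.
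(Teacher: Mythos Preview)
Your high-level strategy --- build on Mavlyutov's construction and track the boundary --- is right, and you correctly identify flatness of the boundary family as the crux. But the mechanism you propose for handling $\partial X$ is not the one that works, and as stated it has a gap. The paper does \emph{not} realise $\partial X$ as $X \cap \partial \tilde X$, nor by restriction to any ``coordinate subspace'' of $\tilde X$ (there is no single such subspace: $\partial X$ has one component per ray of $\sigma$). Instead one singles out a specific reduced divisor $\cD \subset \tilde X$, cut out by the monomial
\[
z \;=\; \prod_{\substack{\xi \in \tilde\sigma(1)\colon \\ \langle e_i^*,\xi\rangle \le 0 \ \forall i}} x_\xi,
\]
so only \emph{some} of the toric boundary components of $\tilde X$ are used, and then proves $X \cap \cD = \partial X$ by a direct ideal-theoretic computation in the Cox ring (the equality $\mathrm{Cox}(\overline I)=\overline J \cap S_0$); torus-equivariance alone does not suffice here. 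Moreover, your hope that the deformation monomials can be chosen to avoid the boundary Cox coordinates is not realised: the third term of each trinomial~\eqref{eq:trinomials} is divisible by $z_0 = \prod_{\langle e_i^*,\xi\rangle<0} x_\xi^{-\langle e_i^*,\xi\rangle}$, and those variables \emph{do} appear in $z$.

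The actual flatness argument runs differently. One sets $\cB = \cX \cap (\cD \times \AA^k)$ and shows that the $k$ trinomials together with the single monomial $z$ form a regular sequence in the Cox ring, reducing via Lemma~\ref{lemma:complete_intersection_flatness_neighbourhood} to the central fibre, i.e.\ to showing that $y_1-z_0,\dots,y_k-z_0,\,z$ is regular. This is Lemma~\ref{lemma:regular_sequence_binomials_monomial}, and it is precisely here that the extra axiom~(iv) of a $\partial$-deformation datum (that $Q_1,\dots,Q_k$ are \emph{lattice} polyhedra) is used: it forces each $y_i$ to be a \emph{reduced} monomial in variables disjoint from those in $z_0$ and from each other, which is exactly the hypothesis of that lemma. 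So the combinatorial condition you anticipated does enter, but through a regular-sequence argument on binomials-plus-a-monomial, not through restriction to strata or a Gr\"obner/lifting-of-relations criterion.
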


Example~\ref{ex:cA1} shows how to use this theorem to deform the 3-fold toric $cA_1$ singularity $\Spec \CC[x,y,z,u]/(xy-u^2)$ together with its toric boundary.

\subsection{Deformations of projective toric pairs}

The deformation theory of complete toric varieties is not fully understood yet. When $X$ is a smooth complete  toric variety, Nathan Ilten \cite{ilten_deformations_smooth_toric_surfaces} has computed the tangent space $\rT^1_X$ to deformations of $X$. But, when $X$ is a singular complete toric variety, the tangent space $\rT^1_X$ and the miniversal deformation of $X$ are unknown in general.

Nonetheless, in the literature there are some constructions of homogeneous deformations of toric varieties. 
 Ilten and Vollmert \cite{deformations_T_varieties} construct deformations of rational $T$-varieties of complexity $1$, which are a generalisation of toric varieties \cite{geometry_T_varieties, altmann_hausen_affine_T_varieties, altmann_hausen_suess_T_varieties}. Hochenegger and Ilten \cite{hochenegger_ilten} construct deformations of a rational complexity-1 T-variety together with a T-line bundle. Mavlyutov \cite{mavlyutov} uses Minkowski decompositions of polyhedral complexes in order to construct homogeneous  deformations. Laface and Melo \cite{laface_melo} construct deformations of smooth complete toric varieties by using their Cox rings.

Here, by avoiding the languages of T-varieties and of Cox rings (or more precisely by sweeping them under the carpet), we propose an explicit construction of deformations of polarised projective toric varieties together with their toric boundaries. These deformations live in an ambient projective toric variety $\tilde{X}$ and are completely explicit in terms of the Cox coordinates of $\tilde{X}$.

Our strategy consists in deforming a projective toric variety $X$ by deforming its affine cone $C$ with respect to an ample torus-invariant $\QQ$-Cartier $\QQ$-divisor $D$ on $X$. Deforming a polarised projective variety by deforming its affine cone have already appeared in the literature, e.g.\ \cite{pinkham_deformations_C*action, piene_schlessinger}; in our toric context we took the idea from a specific case in \cite{ilten_sigma}.  More specifically, if the fan of $X$ is in the lattice $N$ of rank $n$, then the section ring
\[
\bigoplus_{k \in \NN} \rH^0(X, \cO_X(\lfloor kD \rfloor))
\]
coincides with $\CC[\tau^\vee \cap M_0]$, where $\tau$ is an $(n+1)$-dimensional strongly convex rational polyhedral cone in the lattice $N_0 = N \oplus \ZZ e_0$ such that $e_0$ is in the interior of $\tau$. We refer the reader to \S\ref{sec:polarised_projective_toric_varieties} for more details about the relationship between the pair $(X,D)$ and the cone $\tau$. Starting from a Minkowski decomposition of a polyhedron inside $\tau$ and by applying Mavlyutov's construction (\cite{mavlyutov} and Theorem~\ref{thm:mavlyutov}), we can deform $C = \Spec \CC[\tau^\vee \cap M_0]$, which is the affine cone over $X$; by applying Proj we construct a deformation of $X = \Proj \CC[\tau^\vee \cap M_0]$. 
Theorem~\ref{thm:deformations_projective_toric_varieties} expresses this deformation via explicit equations in Cox coordinates of a projective variety $\tilde{X}$.

If, in addition, the divisor $D$ is a $\ZZ$-divisor, then we are able to deform also the toric boundary of $X$. This is the content of the following theorem.

\begin{theorem} \label{thm:deformations_projective_toric_pairs_easy}
Let $X$ be a projective toric variety with toric boundary $\partial X$. Given an ample torus-invariant $\QQ$-Cartier $\ZZ$-divisor on $X$ and a Minkowski decomposition of a polyhedron satisfying certain conditions, one can construct a deformation of the pair $(X, \partial X)$ over a power series ring in finitely many variables over $\CC$. (See Theorem~\ref{thm:deformations_projective_toric_pairs} for the precise statement.)
\end{theorem}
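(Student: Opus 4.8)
The plan is to reduce the projective statement to the affine one (Theorem~\ref{thm:deformations_affine_toric_pairs_easy}) by passing to the affine cone and then applying $\Proj$. First I would set up the polarised toric variety $(X, D)$, where $D$ is the given ample torus-invariant $\QQ$-Cartier $\ZZ$-divisor, and form the cone $\tau \subseteq (N_0)_\RR = N_\RR \oplus \RR e_0$ together with the affine cone $C = \Spec \CC[\tau^\vee \cap M_0]$ over $X$, as described in \S\ref{sec:polarised_projective_toric_varieties}. The hypothesis that $X$ has a toric boundary $\partial X$ and that $D$ is a $\ZZ$-divisor means that $e_0$ spans a ray of $\tau$, hence $C$ is an affine toric variety without torus factors and its toric boundary $\partial C$ is exactly the affine cone over $\partial X$ together with the vertex divisor; more precisely, $\partial C = C \cap \{$union of the facets of $\tau$ not through $e_0\}$ maps to $\partial X$ under $C \setminus \{0\} \to X$. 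So a Minkowski decomposition of a polyhedron inside $\tau$, satisfying the conditions of the $\partial$-deformation datum of Definition~\ref{def:deformation_datum}, is exactly the combinatorial input needed.

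Next I would invoke Theorem~\ref{thm:deformations_affine_toric_pairs_easy} (in its precise form, Theorem~\ref{thm:deformations_affine_toric_pairs}) applied to the pair $(C, \partial C)$: this produces a formal deformation of $\partial C \into C$ over a power series ring $A = \CC[\![t_1, \dots, t_r]\!]$, realised inside a bigger affine toric variety $\tilde{C} = \toric{\CC}{\tilde\tau}$ via explicit equations in Cox coordinates. The key point is that this whole construction is homogeneous for the torus action, and in particular $\CC^*$-equivariant for the grading induced by $e_0$ (equivalently by the $M_0 \to \ZZ$ given by the last coordinate, which records the $D$-degree). Concretely, the bigger lattice $\tilde{N}_0$ still contains $e_0$ in the interior of $\tilde\tau$, the deforming monomials all live in degree $\geq 1$, and the family $\cX \to \Spec A$ and the subfamily $\cB \subseteq \cX$ carry compatible $\CC^*$-actions. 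I would then take $\Proj$ relative to $\Spec A$ of the graded $A$-algebras defining $\cX$ and $\cB$, obtaining $\mathcal{Y} \to \Spec A$ with a closed subscheme $\partial\mathcal{Y} \into \mathcal{Y}$; the zero-fibre recovers $(X, \partial X) = (\Proj \CC[\tau^\vee \cap M_0], \Proj \text{ of the boundary ideal})$. One checks flatness of $\mathcal{Y}$ and $\partial\mathcal{Y}$ over $A/\frakm_A^{n+1}$ fibrewise: flatness of the cones is preserved by $\Proj$ because the relevant graded modules are flat in each degree (this is where one uses that the section ring of $(X, \lfloor kD \rfloor)$ is the degree-$k$ piece of $\CC[\tau^\vee \cap M_0]$, i.e. that $C$ is genuinely the cone over the polarised $X$), and then assembling the base changes to $A/\frakm_A^{n+1}$ yields the required compatible system of diagrams, i.e. a formal deformation of $\partial X \into X$ over $A$.

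The main obstacle I expect is not the $\Proj$ formalism itself but the bookkeeping needed to guarantee that $\Proj$ of the affine family is flat and has the right special fibre — equivalently, that the deformation produced by Theorem~\ref{thm:deformations_affine_toric_pairs} for the cone is compatible with the grading in the strong sense that taking degree-$k$ pieces commutes with the base changes $A/\frakm_A^{n+2} \onto A/\frakm_A^{n+1}$. This forces one to be careful about which polyhedra and which characters $w$ are allowed: the $\partial$-deformation datum must be chosen so that the extra monomials deforming the binomial equations of $C \into \tilde{C}$ are homogeneous of positive degree with respect to the $e_0$-grading, so that nothing is lost or created in degree $0$ and the irrelevant ideal behaves well. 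I would also need to verify that $\partial\mathcal{Y}$ is indeed flat — the toric boundary is a non-reduced-looking complete intersection of the cone equations with the product of Cox coordinates, and one must check that intersecting with $\Proj$ does not disturb this; this follows from the corresponding affine statement in Theorem~\ref{thm:deformations_affine_toric_pairs} together with homogeneity, but it is the place where the argument has the most moving parts. Everything else — constructing $\tilde{X} = \Proj$ of $\tilde{C}$, writing the equations in Cox coordinates of $\tilde{X}$, identifying the special fibre — is routine once the graded structure is in hand.
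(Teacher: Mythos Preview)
Your overall strategy --- pass to the affine cone $C$, apply Theorem~\ref{thm:deformations_affine_toric_pairs}, then $\Proj$ --- matches the paper's, but two of your supporting claims are wrong and one of them hides the real role of the $\ZZ$-divisor hypothesis.

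First, $e_0$ does \emph{not} span a ray of $\tau$; by construction (Lemma~\ref{lemma:polarised_projective_varieties}) it lies in the \emph{interior} of $\tau$. Your description of $\partial C$ is accordingly confused: $\partial C$ is simply the full toric boundary of $C$, and Lemma~\ref{lemma:polarised_toric_proj} already gives $\partial X = \Proj \CC[\tau^\vee\cap M_0]/L$ for any ample $\QQ$-Cartier $\QQ$-divisor $D$, with no integrality needed at that step.

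The genuine gap is where the $\ZZ$-divisor assumption enters. It is not used to relate $\partial C$ to $\partial X$; rather, it forces (via Lemma~\ref{lemma:polarised_projective_varieties}) every ray of $\tau$, and hence of $\tilde\tau$, to have primitive generator of the form $\rho - a_\rho e_0$ with $\rho$ primitive in $N$ (resp.\ $\tilde N$) and $a_\rho\in\ZZ$. In the language of Lemma~\ref{lemma:cox_coordinates_proj_toric_and_affine_cone} this means all $b_\rho=1$, so the comparison map $S_{\tilde X}\to S_{\tilde C}$ between the Cox rings of $\tilde X$ and its affine cone $\tilde C$ is literally the identity. That is precisely what allows the \emph{squarefree} monomial defining $\cD\subseteq\tilde X$ to agree with the squarefree monomial that Theorem~\ref{thm:deformations_affine_toric_pairs} prescribes on the cone side; without $b_\rho=1$ the two differ by exponents and the identification $X\cap\cD=\partial X$ via the affine statement does not follow. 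Finally, the homogeneity condition you need is not that ``the deforming monomials live in degree $\geq 1$'' but that $w\in M\subseteq M_0$ (so $\tilde w$ has zero $e_0^*$-component), which is what makes the trinomial ideal $\NN$-homogeneous for $e_0$. With these points in place the paper concludes via Lemma~\ref{lemma:cox_coordinates_proj_toric_and_affine_cone}, Lemma~\ref{lemma:regular_sequence_binomials_monomial} for the regular sequence, and the flatness argument of Theorem~\ref{thm:deformations_projective_toric_varieties}(B).
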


\subsection{Mutations and deformations of Fano toric pairs} \label{sec:intro_mutations_deformations}

A \emph{Fano polytope} in a lattice $N$ is a full dimensional polytope $P \subseteq N_\RR$ such that the origin $0 \in N$ lies in the strict interior of $P$ and every vertex of $P$ is a primitive lattice point.
The \emph{spanning fan} (also called the \emph{face fan}) of a Fano polytope $P \subseteq N_\RR$, i.e.\ the fan in $N_\RR$ whose cones are the cones over the faces of $P$, determines a toric variety $X_P$ which is Fano, i.e.\ its anticanonical divisor is $\QQ$-Cartier and ample. This establishes a bijective correspondence between Fano $T_N$-toric varieties and Fano polytopes in $N$. 

Starting from a primitive vector $w \in M$ and a polytope $F \subseteq w^\perp \subseteq N_\RR$ satisfying certain conditions (see Definition~\ref{def:factor}) with respect to the Fano polytope $P \subseteq N_\RR$, it is possible to construct another Fano polytope $P' := \mathrm{mut}_{w,F}(P) \subseteq N_\RR$ (see Definition~\ref{def:mutation}). This procedure is called \emph{mutation} \cite{sigma} and its motivation lies in the study of Mirror Symmetry for Fano varieties \cite{mirror_symmetry_and_fano_manifolds, conjectures}.

It was observed by Nathan Ilten \cite{ilten_sigma} that if two Fano polytopes $P$ and $P'$ in $N_\RR$ are related by a mutation then the corresponding Fano varieties $X_P$ and $X_{P'}$ are two closed fibres of a flat family over $\PP^1$. Ilten's construction relies on the theory of deformations of T-varieties developed in \cite{deformations_T_varieties}.

Here we apply our Theorem~\ref{thm:deformations_projective_toric_pairs_easy} (i.e.\ Theorem~\ref{thm:deformations_projective_toric_pairs}) to this case because the toric boundary $\partial X_P$ is an ample $\QQ$-Cartier $\ZZ$-divisor on $X_P$ and the combinatorial conditions in the definition of mutation allows us to construct a $\partial$-deformation datum. We will show that $X_P$ and $X_{P'}$ are two fibres of the flat family of divisors defined by a trinomial in the Cox coordinates of a projective toric variety of dimension $\dim X_P + 1$. In addition to what was done by Ilten, we can show that also the toric boundary $\partial X_P$ deforms to $\partial X_{P'}$.

\begin{theorem} \label{thm:mutations_induce_deformations_easy}
Let $P$ and $P'$ be two Fano polytopes related by a mutation.
Let $X_P$ (resp.\ $X_{P'}$) be the Fano toric variety associated to the spanning fan of $P$ (resp.\ $P'$) and let $\partial X_P$ (resp.\ $\partial X_{P'}$) be the toric boundary of $X_P$ (resp.\ $X_{P'}$). Then there exists a commutative diagram
\[
\xymatrix{
\cB \ \ar[rd] \ar@{^{(}->}[r] & \cX \ar[d] \\
& V
}
\]
such that $V$ is an open subscheme of $\PP^2_\CC$, the morphism $\cB \into \cX$ is a closed embedding, the morphisms $\cB \to V$ and $\cX \to V$ are projective and flat, and there are two closed points in $V$ for which the base change of the diagram to them are the closed embeddings $\partial X_P \into X_P$ and $\partial X_{P'} \into X_{P'}$ over $\Spec \CC$, respectively.
\end{theorem}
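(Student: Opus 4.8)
The plan is to deduce this from Theorem~\ref{thm:deformations_projective_toric_pairs_easy}, in the precise form of Theorem~\ref{thm:deformations_projective_toric_pairs}, applied to the polarised pair $(X_P, -K_{X_P})$. Since $X_P$ is Fano, the anticanonical divisor $-K_{X_P}$ coincides with the toric boundary $\partial X_P$ and is torus-invariant, $\QQ$-Cartier, ample, and a $\ZZ$-divisor, so the hypotheses of Theorem~\ref{thm:deformations_projective_toric_pairs_easy} are satisfied. The cone attached to this pair (see \S\ref{sec:polarised_projective_toric_varieties}) is the cone $\tau$ over $P$ placed at height one in $N_0 = N \oplus \ZZ e_0$, so that $e_0$ lies in the interior of $\tau$ and $\Spec \CC[\tau^\vee \cap M_0]$ is the affine cone over $X_P$ with respect to $-K_{X_P}$.

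First I would convert the mutation datum into a $\partial$-deformation datum for $(N_0, \tau)$. Writing $P' = \mathrm{mut}_{w,F}(P)$ with $w \in M$ primitive and $F \subseteq w^\perp$ the factor, set $\tilde w = (w, 0) \in M_0$; this is the character needed to run the projective version of the construction, since $\langle \tilde w, e_0 \rangle = 0$. The conditions making $F$ a factor (Definition~\ref{def:factor}) translate, after passing to $\tau$, into a Minkowski decomposition $Q_0 + Q_1$ of an appropriate polyhedral slice of $\tau$, in which $Q_1$ is the lattice polytope cut out by $F$ and $Q_0$ records the complementary part of the slice; one then checks that $(Q_0 + Q_1, Q_0, Q_1, \tilde w)$ satisfies the axioms of Definition~\ref{def:deformation_datum}. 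A single datum suffices, and the resulting equations are polynomial, so the deformation produced is algebraic.

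Feeding this $\partial$-deformation datum into Theorem~\ref{thm:deformations_projective_toric_pairs} yields a projective toric variety $\tilde X$ of dimension $\dim X_P + 1$ and a deformation of $\partial X_P \into X_P$ whose equations, in the Cox coordinates of $\tilde X$, deform the binomial equations of $X$ by a single extra monomial, producing a \emph{trinomial} $c_0 m_0 + c_1 m_1 + c_2 m_2$ with $m_0, m_1, m_2$ honest monomials of the Cox ring of $\tilde X$. Since only these three coefficients enter, I would let $(c_0 : c_1 : c_2)$ range over $\PP^2_\CC$, obtaining a family of closed subschemes $\cB \subseteq \cX$ of $\tilde X \times_\CC \PP^2$, and then take $V \subseteq \PP^2$ to be the open locus over which both $\cX \to V$ and $\cB \to V$ are flat; they are automatically projective because $\tilde X$ is. By Theorem~\ref{thm:deformations_projective_toric_pairs} the fibre over the appropriate coordinate point of $\PP^2$ (where the trinomial degenerates to the binomial defining $X$) is the embedding $\partial X_P \into X_P$.

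The main work, and the expected obstacle, is to locate a \emph{second} closed point of $V$ whose fibre is precisely $\partial X_{P'} \into X_{P'}$. Setting the appropriate two of the three coefficients to nonzero values and the third to zero again degenerates the trinomial to a binomial, and one must check that the closed subscheme of $\tilde X$ it defines, together with its boundary, is exactly $X_{P'}$ with $\partial X_{P'}$ — equivalently, that the monomial data surviving this degeneration reassemble into the spanning fan of $P' = \mathrm{mut}_{w,F}(P)$. This is where the explicit mutation formula for the vertices of $P'$ must be matched, term by term, against the monomial bookkeeping of Mavlyutov's construction; it is in substance the computation carried out by Ilten~\cite{ilten_sigma} for the varieties alone, with the additional task of tracking the toric boundary divisors, which is feasible because the Cox description of $\tilde X$ keeps every boundary component explicit. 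The symmetry of mutation — $P$ is recovered from $P'$ by the inverse mutation — both makes this plausible and helps organise the identification of $X_P$ and $X_{P'}$ with two coordinate fibres of the $\PP^2$-family.
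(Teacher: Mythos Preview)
Your plan is correct and matches the paper's proof: the $\partial$-deformation datum is $(G+F,\,G,\,F,\,w)$ with $G = \conv{\bigcup_h (G_h + e_0)/(-h)}$, Theorem~\ref{thm:deformations_projective_toric_pairs} produces the trinomial family in the Cox coordinates of $\tilde X$, and the fibre over the second binomial point is identified with $(X_{P'},\partial X_{P'})$ via the inverse mutation together with the shear automorphism $v + k e_1 \mapsto v + (k + \langle w, v \rangle) e_1$ of $\tilde N$. The only refinement the paper adds is that it takes $V = \PP^2_\CC \setminus \{[1{:}0{:}0],[0{:}1{:}0]\}$ explicitly and proves flatness over all of $V$ by checking directly that the trinomial and the boundary monomial are coprime in the UFD $S_{\tilde X}$ at every point of $V$ (Lemma~\ref{lemma:trinomial_monomial_regular_sequence}), rather than retreating to an unspecified open flat locus.
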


Very roughly speaking, Ilten's result says that mutations of Fano polytopes create a $1$-dimensional skeleton in the moduli space of Fano varieties. Our theorem extends this interpretation to moduli of log Calabi--Yau pairs $(X,B)$ where $X$ is Fano.

The precise constructions of $V$, $\cB$ and $\cX$ in Theorem~\ref{thm:mutations_induce_deformations_easy} are given in Theorem~\ref{thm:mutations_induce_deformations}. We refer the reader to Example~\ref{ex:P2_P114} for an application of this result to construct the degeneration of $\PP^2$ to the weighted projective plane $\PP(1,1,4)$.

\subsection{Outline of the article} 

In \S\ref{sec:preliminaries_toric_geometry} we discuss Cox coordinates on toric varieties and we study polarised projective toric varieties.
In \S\ref{sec:deformations_affine_mavlyutov} we recall Mavlyutov's construction of deformations of affine toric varieties.
In \S\ref{sec:deformations of toric affine pairs} we construct deformations of affine toric pairs and we prove Theorem~\ref{thm:deformations_affine_toric_pairs_easy}.
In \S\ref{sec:deformations_projective_toric} we construct deformations of projective toric varieties.
In \S\ref{sec:deformations_projective_toric_pairs} we construct deformations of projective toric pairs and we prove Theorem~\ref{thm:deformations_projective_toric_pairs_easy}.
In \S\ref{sec:mutations_induce_deformations} we recall the notion of mutation between Fano polytopes and we prove Theorem~\ref{thm:mutations_induce_deformations_easy}.

\subsection{Notation and conventions}

The sets of non-negative or positive integers are denoted by $\NN := \{ 0, 1, 2, 3, \dots \}$ and $\NN^+ := \{ 1, 2, 3, \dots \}$, respectively.
The symbol $\CC$ stands for an algebraically closed field of characteristic zero.

A \emph{lattice} is a finitely generated free abelian group. The letters $N, N_0, \tilde{N}, \tilde{N}_0$ stand for lattices and $M, M_0, \tilde{M}, \tilde{M}_0$ for their duals, e.g.\ $M = \Hom_\ZZ (N, \ZZ)$. We set $N_\RR := N \otimes_\ZZ \RR$ and $M_\RR := M \otimes_\ZZ \RR$. The perfect pairing $M \times N \to \ZZ$ and its extension $M_\RR \times N_\RR \to \RR$ are denoted by the symbol $\langle \cdot, \cdot \rangle$.

In a real vector space $V$ of finite dimension, a \emph{cone} is a non-empty subset which is closed under sum and multiplication by non-negative real numbers. The conical hull $\cone{S}$ of a subset $S \subseteq V$ is the smallest cone containing $S$, i.e.\ the set made up of $\lambda_1 s_1 + \cdots + \lambda_k s_k$, as $k \in \NN$, $\lambda_i \geq 0$, and $s_i \in S$. A subset of $V$ is called a \emph{polyhedral cone} if it coincides with $\cone{S}$ for some finite subset $S \subseteq V$, or equivalently it is the intersection of a finite number of closed halfspaces passing through the origin. The convex hull of a subset $S \subseteq V$ is denoted by $\conv{S}$.
A \emph{polyhedron} is the intersection of a finite number of closed halfspaces, so it is always convex and closed. A compact polyhedron is called \emph{polytope}. If $Q$ is a polyhedron, $\vertices(Q)$ denotes the set of vertices of $Q$ and $\rec(Q)$ is its recession cone, i.e.\ the cone of the unbounded directions of $Q$. 
If $Q_1$ and $Q_2$ are polyhedra, then their \emph{Minkowski sum} is $Q_1 + Q_2 := \{ q_1 + q_2 \mid q_1 \in Q_1, q_2 \in Q_2 \}$; in this case we say also that this is a \emph{Minkowski decomposition} of $Q$. If $Q$ is a polyhedron such that $\rec(Q)$ is strongly convex, then $Q = \conv{\vertices(Q)} + \rec(Q)$.
We refer the reader to the book \cite{ziegler} for details.

We assume the standard terminology of commutative algebra and of algebraic geometry. By a ring we always understand a commutative ring with unit.

\subsection*{Acknowledgements} 

Parts of this article appear in the author's Ph.D.\ thesis at Imperial College London. The author wishes to thank his advisor Alessio Corti for suggesting the problem and Thomas Prince for fruitful conversations. Moreover, he is grateful to Alessandro Chiodo, Nathan Ilten, Alexander Kasprzyk, and Richard Thomas for helpful comments on a preliminary version of this manuscript. 
The author was funded by a Roth Studentship from Imperial College London, by Tom Coates' ERC Consolidator Grant~682603, and by Alexander Kasprzyk's EPSRC Fellowship~EP/N022513/1.

\section{Preliminaries on toric geometry} \label{sec:preliminaries_toric_geometry}

\subsection{Cox coordinates}

For generalities about toric varieties we refer the reader to \cite{fulton_toric_varieties} and \cite{cox_toric_varieties}. We firstly treat toric schemes, with split tori, which are defined over arbitrary rings and consider their total coordinate rings.

\begin{remark}[Toric schemes over arbitrary rings] \label{rmk:toric_varieties_general_basis}
Let $A$ be a ring, let $N$ be a lattice, and let $\Sigma$ be a fan of strongly convex rational polyhedral cones in $N_\RR$. For every cone $\sigma \in \Sigma$, we consider its dual $\sigma^\vee \subseteq M_\RR$, the semigroup $\sigma^\vee \cap M$, and the semigroup $A$-algebra $A[\sigma^\vee \cap M]$. We denote by $\toric{A}{\Sigma}$ the scheme obtained by gluing the affine schemes $\toric{A}{\sigma} = \Spec A[\sigma^\vee \cap M]$ thanks to the structure of the fan $\Sigma$, as it is customary in toric geometry. One may prove that $\toric{A}{\Sigma}$ is a separated flat scheme of finite presentation over $A$ with relative dimension $\rank N$ and geometrically integral fibres. When $A = \CC$, $\toric{A}{\Sigma} = \toric{\CC}{\Sigma}$ is exactly the toric variety over $\CC$ associated to the fan $\Sigma$ considered in \cite{fulton_toric_varieties, cox_toric_varieties}. 

Now suppose that $N_\RR$ is generated as an $\RR$-vector space by the support $\vert \Sigma \vert$ of $\Sigma$. In other words we assume that $\toric{\CC}{\Sigma}$ has \emph{no torus factors}. Let $\Sigma(1)$ be the set of rays of $\Sigma$. We do not distinguish a ray of $\Sigma$, which is actually a $1$-dimensional cone of $\Sigma$, from its primitive generator, which is actually the lattice point on the ray that is the closest one to the origin. Generalising the definition of Cox coordinates on toric varieties (see \cite{cox_coordinates},\cite[\S{5.2}]{cox_toric_varieties} or \cite{miller_sturmfels}), we say that the polynomial ring $S = A [ x_\rho \mid \rho \in \Sigma(1)]$ is the total coordinate ring of $\toric{A}{\Sigma}$. The variables $x_\rho$ are called \emph{Cox coordinates} or homogeneous coordinates. The $A$-algebra $S$ has a grading with respect to the divisor class group $G_\Sigma = \mathrm{Cl}(\toric{\CC}{\Sigma})$ of the variety $\toric{\CC}{\Sigma}$, which is a quotient of the free abelian group $\ZZ^{\Sigma(1)}$ according to the divisor sequence of $\Sigma$ (see \cite[(5.1.1)]{cox_toric_varieties}):
\begin{equation*}
0 \longrightarrow M \longrightarrow \ZZ^{\Sigma(1)} \longrightarrow G_\Sigma = \mathrm{Cl}(\toric{\CC}{\Sigma}) \longrightarrow 0.
\end{equation*}
For every cone $\sigma \in \Sigma$, setting $x^{\hat{\sigma}} = \prod_{\rho \notin \sigma(1)} x_\rho \in S$, the map defined by
\begin{equation*}
\mathrm{Cox} \colon \chi^u \mapsto x^u = \prod_{\rho \in \Sigma(1)} x_\rho^{\langle u, \rho \rangle},
\end{equation*}
where $u \in \sigma^\vee \cap M$ and $\chi^u$ is the corresponding element in $A[\sigma^\vee \cap M]$,
induces a ring isomorphism
\begin{equation*}
A [\sigma^\vee \cap M] \simeq S_{(x^{\hat{\sigma}})} \subseteq S_{x^{\hat{\sigma}}},
\end{equation*}
where $S_{x^{\hat{\sigma}}}$ is the localization of $S$ obtained by inverting the element $x^{\hat{\sigma}}$ and $S_{(x^{\hat{\sigma}})}$ is the subring of the $S_{x^{\hat{\sigma}}}$ consisting of elements of degree $0$ with respect to the $G_\Sigma$-grading.

Imitating \cite[\S{5.3}]{cox_toric_varieties}, from a $G_\Sigma$-graded $S$-module $E$ one may construct a quasi-coherent sheaf $\tilde{E}$ on $\toric{A}{\Sigma}$ such that, for every cone $\sigma \in \Sigma$, the sections of $\tilde{E}$ over $\toric{A}{\sigma}$ are the elements of $E_{(x^{\hat{\sigma}})}$, i.e.\ the elements of degree $0$ in the localization $E_{x^{\hat{\sigma}}}$. The assignment $E \mapsto \tilde{E}$ is sometimes called \emph{sheafification} and is an exact functor from the category of $G_\Sigma$-graded $S$-modules to the category of quasi-coherent sheaves on $\toric{A}{\Sigma}$. In particular, the sheafification of a $G_\Sigma$-homogeneous ideal $J$ of $S$ induces a closed subscheme of $\toric{A}{\Sigma}$, whose structure sheaf is the sheafification of $S/J$. Moreover, if $A$ is noetherian and $E$ is finitely generated graded $S$-module, then $\tilde{E}$ is coherent on $\toric{A}{\Sigma}$. 
\end{remark}

The following lemma gives a sufficient criterion to ensure the flatness of the sheafification of a graded module on a toric scheme.

\begin{lemma} \label{lemma:flatness_sheafification_graded_module}
Let $N$ be a lattice and let $\Sigma$ be a fan of strongly convex rational polyhedral cones in $N_\RR$ such that $N_\RR$ is generated by $\vert \Sigma \vert$ as $\RR$-vector space. Let $A$ be a ring and let $\toric{A}{\Sigma}$ be the $A$-scheme constructed in Remark \ref{rmk:toric_varieties_general_basis}. Let $S$ be the total coordinate ring of $\toric{A}{\Sigma}$ and let $E$ be a graded $S$-module.
If $E$ is flat as an $A$-module, then $\tilde{E} \in \QCoh(\toric{A}{\Sigma})$ is flat over $\Spec A$.
\end{lemma}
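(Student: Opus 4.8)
The plan is to reduce flatness to an affine-local statement and then to a one-line argument about localizations and gradings of flat modules. Flatness of a quasi-coherent sheaf over $\Spec A$ can be tested on any affine open cover of the total space; since $\toric{A}{\Sigma}$ is covered by the charts $\toric{A}{\sigma} = \Spec A[\sigma^\vee \cap M]$ for $\sigma \in \Sigma$, and since by the construction recalled in Remark~\ref{rmk:toric_varieties_general_basis} the module of sections of $\tilde{E}$ over $\toric{A}{\sigma}$ is $E_{(x^{\hat{\sigma}})}$ (the degree-$0$ part of $E_{x^{\hat{\sigma}}}$ for the $G_\Sigma$-grading), it suffices to prove that $E_{(x^{\hat{\sigma}})}$ is a flat $A$-module for every $\sigma \in \Sigma$.

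First I would treat the localization step. The localization $E_{x^{\hat{\sigma}}}$ is the filtered colimit of the system $E \xrightarrow{\,\cdot x^{\hat{\sigma}}\,} E \xrightarrow{\,\cdot x^{\hat{\sigma}}\,} \cdots$ of $A$-linear maps between copies of the $A$-module $E$ (multiplication by the element $x^{\hat{\sigma}} \in S$ is $S$-linear, hence $A$-linear). Since $E$ is $A$-flat by hypothesis and a filtered colimit of flat $A$-modules is flat, $E_{x^{\hat{\sigma}}}$ is a flat $A$-module.

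Next, the passage to degree $0$. Because $S$ is $G_\Sigma$-graded with $A \subseteq S_0$, because $E$ is a $G_\Sigma$-graded $S$-module, and because $x^{\hat{\sigma}}$ is homogeneous, the localization $E_{x^{\hat{\sigma}}}$ inherits a $G_\Sigma$-grading, and the decomposition $E_{x^{\hat{\sigma}}} = \bigoplus_{g \in G_\Sigma} (E_{x^{\hat{\sigma}}})_g$ into graded pieces is a decomposition of $A$-modules, each piece being an $A$-submodule since $A$ is concentrated in degree $0$. In particular $E_{(x^{\hat{\sigma}})} = (E_{x^{\hat{\sigma}}})_0$ is a direct summand of the flat $A$-module $E_{x^{\hat{\sigma}}}$; as tensoring with an $A$-module commutes with direct sums and a direct sum of homomorphisms is injective if and only if each summand is, a direct summand of a flat module is flat. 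Hence $E_{(x^{\hat{\sigma}})}$ is $A$-flat, and the lemma follows.

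I do not expect any serious obstacle: the proof is a bookkeeping exercise combining ``localization preserves flatness over the base'' with ``a graded piece is a direct summand.'' The only points requiring a little care are to make sure that the grading of $E_{x^{\hat{\sigma}}}$ is by $A$-submodules (equivalently, that the $A$-action on $E_{(x^{\hat{\sigma}})}$ via $A \to A[\sigma^\vee\cap M] \cong S_{(x^{\hat{\sigma}})}$ agrees with the one it carries as a summand of $E_{x^{\hat{\sigma}}}$) and to invoke the standard fact that flatness of a quasi-coherent sheaf can be detected on an affine cover of the total space. Should one wish to avoid the latter, one can argue at the level of stalks instead: every stalk of $\tilde{E}$ at a point of $\toric{A}{\sigma}$ is a localization of $E_{(x^{\hat{\sigma}})}$, hence a filtered colimit of copies of a flat $A$-module, hence $A$-flat.
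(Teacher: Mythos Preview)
Your proof is correct and follows essentially the same route as the paper: reduce to the charts, observe that $E_{x^{\hat{\sigma}}}$ is $A$-flat, and then use that the degree-$0$ part $E_{(x^{\hat{\sigma}})}$ is an $A$-direct summand of a flat $A$-module. You have simply spelled out in more detail the two facts the paper states in one line (flatness of the localization and flatness of a direct summand).
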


\begin{proof}
It is enough to show that $E_{(x^{\hat{\sigma}})}$ is flat over $A$, for every cone $\sigma \in \Sigma$. The localisation $E_{x^{\hat{\sigma}}}$ is a $G_\Sigma$-graded flat $A$-module and the homogeneous localisation $E_{(x^{\hat{\sigma}})}$ is its degree zero part. Therefore, $E_{(x^{\hat{\sigma}})}$ is a direct summand of $E_{x^{\hat{\sigma}}}$ as $A$-modules and is flat over $A$.
\end{proof}

\subsection{Polarised projective toric varieties}
\label{sec:polarised_projective_toric_varieties}

Now we discuss projective toric varieties  $X$ polarised by an ample $\QQ$-Cartier $\QQ$-divisor which is supported on the toric boundary $\partial X$. This section is not necessary for \S\ref{sec:deformations_affine_mavlyutov} and \S\ref{sec:deformations of toric affine pairs}.

The lemma below is a well known characterisation of polarised projective toric varieties.

\begin{lemma} \label{lemma:polarised_projective_varieties}
If $N$ is a lattice of rank $n$, then the following data are naturally equivalent:
\begin{enumerate}
\item a pair $(X,D)$, where $X$ is a projective normal toric variety over $\CC$ with respect to the torus $T_N = \Spec \CC[M]$ and $D$ is an ample torus-invariant $\QQ$-Cartier $\QQ$-divisor on $X$;
\item a pair $(\Sigma, \phiv)$, where $\Sigma$ is a complete fan in $N$ and $\phiv$ is a \emph{strictly convex rational support function} on $\Sigma$, i.e.\ $\phiv \colon N_\RR \to \RR$ is a continuous function such that
\begin{itemize}
\item for every $\sigma \in \Sigma(n)$, there exists $u_\sigma \in M_\QQ$ such that $\phiv (v) = \langle u_\sigma, v \rangle$ for all $v \in \sigma$;
\item for every $\sigma \in \Sigma(n)$, $\phiv(v) < \langle u_\sigma, v \rangle$ for all $v \in N_\RR \setminus \sigma$;
\end{itemize}
\item a rational polytope $P \subseteq M_\RR$ of dimension $n$;
\item a strictly convex rational polyhedral cone $\tau$ in the lattice $N_0 = N \oplus \ZZ e_0$ such that the dimension of $\tau$ is $n+1$ and $e_0$ is in the interior of $\tau$.
\end{enumerate}
In the setting above there are natural bijective correspondences if in addition we require the following further conditions too:
\begin{enumerate}
\item $D$ is a $\QQ$-Cartier $\ZZ$-divisor on $X$;
\item $\phiv$ takes integer values on the primitive generators of the rays of $\Sigma$;
\item every supporting hyperplane of $P$ contains at least a point of the lattice $M$;
\item the primitive generator of every ray of $\tau$ is of the form $\rho - a e_0$ for some $a \in \ZZ$ and $\rho \in N$ primitive.
\end{enumerate}
Moreover, in the setting above there are natural bijective correspondences if we require the following more restrictive further conditions too:
\begin{enumerate}
\item $D$ is a Cartier divisor on $X$;
\item $\phiv$ is a \emph{strictly convex integral support function} on $\Sigma$, i.e.\ we also require that $u_\sigma \in M$ for every $\sigma \in \Sigma(n)$;
\item $P$ is a lattice polytope;
\item every facet of $\tau$ is contained in a hyperplane of the form $(u + e_0^*)^\perp$ for some $u \in M$.
\end{enumerate}
\end{lemma}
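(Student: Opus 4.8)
The plan is to construct the chain of natural bijections $(1)\leftrightarrow(2)\leftrightarrow(3)\leftrightarrow(4)$ and then to check that, under these bijections, the conditions in the middle block of the statement correspond to one another, and likewise the conditions in the last block; granting this, the refinements reduce to tracking integrality through explicit formulas.

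First I would treat $(1)\leftrightarrow(2)\leftrightarrow(3)$, which is classical toric geometry. A torus-invariant $\QQ$-Cartier $\QQ$-divisor $D=\sum_{\rho\in\Sigma(1)}a_\rho D_\rho$ on $X=\toric{\CC}{\Sigma}$ is the same datum as the support function $\phiv=\phiv_D$ that is linear on each maximal cone with $\phiv(\rho)=-a_\rho$ on the rays; then $D$ is $\QQ$-Cartier iff $\phiv$ is rational, $X$ is projective iff $\Sigma$ is complete, and $D$ is ample iff $\phiv$ is strictly convex in the sense of $(2)$; see \cite[Ch.~6]{cox_toric_varieties}. For $(2)\leftrightarrow(3)$ one passes between $\phiv$ and
\[
P_\phiv=\{u\in M_\RR\mid \langle u,v\rangle\geq\phiv(v)\ \text{for all }v\in N_\RR\}=\conv{u_\sigma\mid\sigma\in\Sigma(n)},
\]
which has trivial recession cone, hence is a polytope, is rational because each $u_\sigma\in M_\QQ$, and is $n$-dimensional exactly when $\phiv$ is strictly convex (equivalently $D$ is big); conversely an $n$-dimensional rational polytope has a complete normal fan carrying the strictly convex support function $\phiv_P(v)=\min_{u\in P}\langle u,v\rangle$, with $u_\sigma$ the vertex of $P_\phiv$ dual to $\sigma$. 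See \cite[\S6.1--6.2]{cox_toric_varieties}.

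The genuinely new correspondence is $(3)\leftrightarrow(4)$, the passage to the cone over $P$. Given an $n$-dimensional rational polytope $P\subseteq M_\RR$, I would set
\[
\tau_P:=\cone{P\times\{1\}}^\vee\subseteq N_{0,\RR},
\]
the dual of the polyhedral cone $\cone{P\times\{1\}}\subseteq M_{0,\RR}$ generated by the vectors $(u,1)$ with $u\in\vertices(P)$. Since $P\times\{1\}$ is compact and misses the origin, $\cone{P\times\{1\}}$ is pointed, of dimension $n+1$, is contained in the halfspace of $M_{0,\RR}$ on which the last coordinate is nonnegative, and meets the hyperplane $M_\RR\times\{0\}$ only at $0$; dualising, $\tau_P$ is a strongly convex rational polyhedral cone of dimension $n+1$. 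Since $\cone{P\times\{1\}}$ is polyhedral, hence closed, it equals $\tau_P^\vee$; therefore $e_0$ lies in $\interior\tau_P$ iff every nonzero element of $\tau_P^\vee$ has positive last coordinate, which is exactly what was just observed. A short computation also identifies $\tau_P$ with the epigraph $\{(v,\lambda)\mid\lambda\geq-\phiv_P(v)\}$. For the converse, given a strongly convex rational polyhedral cone $\tau\subseteq N_{0,\RR}$ of dimension $n+1$ with $e_0\in\interior\tau$, the membership $e_0\in\tau$ gives that $\tau^\vee$ lies in the halfspace where the last coordinate is nonnegative, and the interiority of $e_0$ forces $\tau^\vee\cap(M_\RR\times\{0\})=\{0\}$; hence $P:=\{u\in M_\RR\mid(u,1)\in\tau^\vee\}$ is a bounded $n$-dimensional rational polytope and $\tau^\vee=\cone{P\times\{1\}}$, so $\tau=\tau_P$. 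The assignments $P\mapsto\tau_P$ and $\tau\mapsto P$ are mutually inverse and compatible with isomorphisms of $N$. From the epigraph description one reads off that the rays of $\tau$ are generated by the vectors $(\rho,-\phiv_P(\rho))$ for $\rho\in\Sigma(1)$ and its facets lie on the hyperplanes $(u_\sigma+e_0^*)^\perp$, where $e_0^*\in M_0$ is dual to $e_0$.

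Finally I would read off the refinements from these formulas. With $a_\rho=-\phiv(\rho)$: the divisor $D$ is a $\ZZ$-divisor iff $\phiv(\rho)\in\ZZ$ for every primitive ray generator $\rho$; the facet of $P$ dual to $\rho$ lies on $\{u\mid\langle u,\rho\rangle=\phiv(\rho)\}$, and this hyperplane contains a point of $M$ iff $\phiv(\rho)\in\ZZ$ since $\rho$ is primitive; and the ray generator $(\rho,-\phiv(\rho))$ of $\tau$ lies in $N_0$ — and is then automatically primitive and of the form $\rho-\phiv(\rho)e_0$ — iff $\phiv(\rho)\in\ZZ$. Likewise $D$ is Cartier iff every $u_\sigma$ lies in $M$ iff $P$ is a lattice polytope iff every facet of $\tau$ lies on a hyperplane $(u+e_0^*)^\perp$ with $u\in M$, because the facets of $\tau_P$ are cut out precisely by the $(u_\sigma+e_0^*)$. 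I expect the main obstacle to be the converse half of $(3)\leftrightarrow(4)$: showing that an abstract cone $\tau$ with the three listed properties is necessarily the cone over a full-dimensional rational polytope. The crux is the equivalence ``$e_0\in\interior\tau$'' $\iff$ ``$\tau^\vee\cap(M_\RR\times\{0\})=\{0\}$'', after which verifying that the height-one slice of $\tau^\vee$ is a bounded, $n$-dimensional rational polytope is routine; the rest is elementary bookkeeping, the one point requiring care being to keep the $\QQ$-Cartier $\ZZ$-divisor refinement (ray-wise integrality of $\phiv$) separate from the Cartier refinement (integrality of all the $u_\sigma$).
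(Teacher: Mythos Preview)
Your proposal is correct and follows essentially the same approach as the paper: the equivalence $(1)\leftrightarrow(2)\leftrightarrow(3)$ is declared classical with the same formulas, and for $(4)$ the paper also defines $\tau$ as the epigraph of $-\phiv$ (equivalently the cone with rays $\rho-\phiv(\rho)e_0$) and recovers $P$ as the height-one slice $\tau^\vee\cap e_0^{-1}(1)$. The paper's proof is explicitly a sketch, so your version is in fact more detailed, particularly in verifying that $e_0\in\interior(\tau)$ via $\tau^\vee\cap(M_\RR\times\{0\})=\{0\}$ and in tracking the integrality refinements; no changes are needed.
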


\begin{proof}[Sketch of proof]
The equivalence among (1), (2), and (3) is well known (at least under the additional conditions) and associates the pair $(\Sigma, \phiv)$ to the pair 
$(\toric{\CC}{\Sigma}, D)$, where $D = - \sum_{\rho \in \Sigma(1)} \phiv(\rho) D_\rho$, and to the polytope
\begin{equation*}
P = \bigcap_{\rho \in \Sigma(1)} \left\{ u \in M_\RR \left\vert \langle u, \rho \rangle \geq \phiv(\rho) \right. \right\}.
\end{equation*}
Conversely, $\Sigma$ is the normal fan of $P$ and $\phiv = \min_{u \in P} \langle u, \cdot \rangle$. We refer the reader to \cite[\S{6}]{cox_toric_varieties} for more details.

The equivalence with (4) is as follows: $\tau$ is the convex hull of the graph of the function $-\phiv$, i.e.\ $
\tau = \{ v + k e_0 \in N_\RR \oplus \RR e_0 \mid \phiv (v) + k \geq 0 \}
$, or equivalently the cone with rays $\rho - \phiv(\rho) e_0$ as $\rho \in \Sigma(1)$. Conversely, the cones of $\Sigma$ are precisely the images of the faces of $\tau$ along the projection $N \oplus \ZZ e_0 \onto N$ and
\[
P = \tau^\vee \cap e_0 \inv (1) = \{ u \in M_\RR \mid u + e_0^* \in \tau^\vee \}. \qedhere
\]
\end{proof}

The following lemma, which is a reformulation of \cite[Theorem~7.1.13 and Proposition 8.2.11]{cox_toric_varieties}, describes a polarised projective toric variety $X$ as the Proj of an $\NN$-graded ring constructed from the cone $\tau$, where $\tau$ is the cone as in Lemma \ref{lemma:polarised_projective_varieties}. It also gives a description of the toric boundary.

\begin{lemma} \label{lemma:polarised_toric_proj}
Let $N$ be a lattice of rank $n$, let $\tau$ be a $(n+1)$-dimensional strongly convex rational polyhedral cone in the lattice $N_0 = N \oplus \ZZ e_0$ such that $e_0 \in \interior(\tau)$, and let $(X,D)$ be the pair associated to $\tau$ via Lemma \ref{lemma:polarised_projective_varieties}. Consider the ideal
\begin{equation} \label{eq:ideal_boundary_polarised_proj_L}
L = \bigoplus_{u + l e_0^* \in \interior(\tau^\vee) \cap M_0 } \CC \chi^{u + l e_0^*} \subseteq \CC[\tau^\vee \cap M_0],
\end{equation}
which is the ideal of the toric boundary of the affine toric variety $\Spec \CC[\tau^\vee \cap M_0]$.

Then $X = \Proj \CC[\tau^\vee \cap M_0]$ and its toric boundary is $\partial X = \Proj \CC[\tau^\vee \cap M_0] / L$, where $\CC[\tau^\vee \cap M_0]$ is $\NN$-graded via $e_0 \in N_0$.
\end{lemma}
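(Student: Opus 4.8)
The plan is to make explicit the combinatorial dictionary behind \cite[Theorem~7.1.13]{cox_toric_varieties} and \cite[Proposition~8.2.11]{cox_toric_varieties}. First I would recognise $R := \CC[\tau^\vee \cap M_0]$, with the $\NN$-grading induced by $e_0 \in N_0$, as the section ring of the polarised pair $(X,D)$, so that $X = \Proj R$; then I would check on an explicit affine cover of $\Proj R$ that the homogeneous ideal $L$ sheafifies to the ideal sheaf $\mathcal{I}_{\partial X}$ of $\partial X$.

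For the first part, let $\Sigma$ and $P \subseteq M_\RR$ be the complete fan and rational polytope attached to $\tau$ by Lemma~\ref{lemma:polarised_projective_varieties}, so that $X = \toric{\CC}{\Sigma}$ and $D = -\sum_{\rho \in \Sigma(1)} \phiv(\rho) D_\rho$. Since $e_0 \in \interior(\tau)$, a linear functional that is non-negative on $\tau$ and vanishes at $e_0$ is identically zero; hence $\langle m, e_0 \rangle \geq 1$ for every $m \in (\tau^\vee \cap M_0) \setminus \{0\}$, so the grading is a genuine $\NN$-grading with $R_0 = \CC$ and $L \subseteq \bigoplus_{k \geq 1} R_k$. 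Using $P = \tau^\vee \cap e_0\inv(1)$ from the proof of that lemma, one gets $\tau^\vee = \cone{\{u + e_0^* \mid u \in P\}}$ and $R_k = \bigoplus_{u \in kP \cap M} \CC\, \chi^{u + k e_0^*}$ for all $k$. On the other hand $\lfloor kD \rfloor = -\sum_\rho \lceil k\phiv(\rho) \rceil D_\rho$, so by the standard description of the global sections of $\cO_X$ of a torus-invariant Weil divisor, $H^0(X, \cO_X(\lfloor kD \rfloor))$ has $\CC$-basis the characters $\chi^u$ with $u \in M$ and $\langle u, \rho \rangle \geq k\phiv(\rho)$ for all $\rho$ (the ceiling is harmless because $\langle u, \rho \rangle \in \ZZ$), i.e. those with $u \in kP \cap M$. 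Hence $\chi^u \mapsto \chi^{u + k e_0^*}$ is an isomorphism of $\NN$-graded $\CC$-algebras from the section ring $\bigoplus_k H^0(X, \cO_X(\lfloor kD \rfloor))$ onto $R$, and since $D$ is ample, the $\Proj$ of this section ring is $X$ by \cite[Theorem~7.1.13]{cox_toric_varieties}; so $X = \Proj R$.

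For the second part I would cover $\Proj R$ by the affine opens $U_F = \{ \chi^{m_F} \neq 0 \}$, where $F$ runs over the facets of $\tau$ and $m_F \in M_0$ is the primitive generator of the ray of $\tau^\vee$ dual to $F$; these opens cover because every $m \in (\tau^\vee \cap M_0) \setminus \{0\}$ satisfies $Nm - m_F \in \tau^\vee \cap M_0$ for a suitable facet $F$ and $N \gg 0$. Writing $\pi \colon N_0 \to N$ for the projection, one checks (e.g. from $\tau = \{ v + k e_0 \mid \phiv(v) + k \geq 0 \}$) that $\pi(F)$ is a maximal cone $\bar\sigma \in \Sigma(n)$ and $\pi|_F \colon F \to \bar\sigma$ is a linear isomorphism, so that taking degree-zero parts of localisations gives $\cO(U_F) = \CC[\bar\sigma^\vee \cap M]$ — whence $U_F$ is the invariant chart $U_{\bar\sigma} \subseteq X$, again by the construction behind \cite[Theorem~7.1.13]{cox_toric_varieties} — while the sheafification of $L$ over $U_F$ is spanned by the $\chi^u$, $u \in M$, such that $u + k m_F \in \interior(\tau^\vee)$ for some $k \geq 0$. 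Restricting the functional $\langle u + k m_F, \cdot \rangle$ to $F$, on which $\langle m_F, \cdot \rangle \equiv 0$, forces $u \in \interior(\bar\sigma^\vee)$; conversely, if $u \in \interior(\bar\sigma^\vee)$ then $\langle u, \cdot \rangle$ is non-negative on $F$, which is the recession cone of the slice $\{ v \in \tau \mid \langle m_F, v \rangle = 1 \}$, so $\sup \{ -\langle u, \pi(v) \rangle / \langle m_F, v \rangle \mid v \in \tau \setminus F \}$ is finite and a suitable integer $k \geq 0$ exists. Thus the sheafification of $L$ over $U_F$ equals $\bigoplus_{u \in \interior(\bar\sigma^\vee) \cap M} \CC\, \chi^u$, which is precisely the ideal of the torus-invariant boundary of $U_{\bar\sigma}$; as this holds on every chart, $L$ sheafifies to $\mathcal{I}_{\partial X}$ and $\Proj(R/L) = \partial X$, the identification of $\partial X$ in these terms being \cite[Proposition~8.2.11]{cox_toric_varieties}. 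I expect the crux to be exactly the direction just described: producing a single exponent $k$ that works simultaneously for all rays of $\tau$ outside $F$, which is where the finiteness of that supremum — equivalently the condition $u \in \bar\sigma^\vee$ — enters.
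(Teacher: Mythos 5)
Your proposal is correct and takes essentially the same approach as the paper: both identify $X = \Proj \CC[\tau^\vee \cap M_0]$ chart by chart (your opens $D_+(\chi^{m_F})$ are exactly the paper's charts for $\chi^{u_\sigma + h_\sigma e_0^*}$, which span the same rays of $\tau^\vee$), and both verify on each chart that the homogeneous localisation of $L$ is the boundary ideal of $\CC[\bar\sigma^\vee \cap M]$ via the same saturation criterion — existence of $k \geq 0$ with $u + k m_F \in \interior(\tau^\vee)$ — checked by splitting the rays of $\tau$ into those lying in the facet and those outside it. The only cosmetic differences are your citation of the section ring of the ample divisor for $X = \Proj$ and the recession-cone phrasing of the ``$k$ large enough'' step, which in the paper is done ray by ray.
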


\begin{proof}[Sketch of proof]
The $\NN$-grading on $\CC[\tau^\vee \cap M_0]$ is such that the degree of $\chi^{u + l e_0^*}$ is $l$ for every $u \in M$ such that $u + l e_0^* \in \tau^\vee \cap M_0$. It is clear that $L$ is a homogeneous ideal.

Let $\Sigma$ be the fan of $X$ and let $\phiv$ be the support function associated to $D$ as in Lemma~\ref{lemma:polarised_projective_varieties}. There is a bijective correspondence between cones of $\Sigma$ and proper subcones of $\tau$. For any ray $\rho \in \Sigma(1)$, let $\xi_\rho \in \tau(1)$ be the corresponding ray of $\tau$. In other words, $\xi_\rho = b_\rho \rho - a_\rho e_0$ where $b_\rho \in \NN^+$, $a_\rho \in \ZZ$ are such that $\gcd(a_\rho, b_\rho) = 1$ and $\phiv(\rho) = a_\rho / b_\rho$.

Fix an $n$-dimensional cone $\sigma \in \Sigma(n)$. It corresponds to an $n$-dimensional face of $\tau$, namely $F_\sigma = \cone{\xi_\rho \mid \rho \in \sigma(1)}$. Since $D$ is $\QQ$-Cartier, there exist $u_\sigma \in M$ and $h_\sigma \in \NN^+$ such that $F_\sigma$ is contained in the hyperplane $(u_\sigma + h_\sigma e_0^*)^\perp$.
The affine open subscheme $\toric{\CC}{\sigma}$ of the toric variety $X = \toric{\CC}{\Sigma}$ is isomorphic to the affine open subscheme of $\Proj \CC[\tau^\vee \cap M_0]$ defined by the homogeneous element $\chi^{u_\sigma + h_\sigma e_0^*}$ because there is a ring isomorphism
\begin{equation} \label{eq:anello_affine_sigma_isom_localizzazione _omogenea}
\CC [ \tau^\vee \cap M_0]_{(\chi^{u_\sigma + h_\sigma e_0^*})} \overset{\sim}{\longrightarrow} \CC[\sigma^\vee \cap M]
\end{equation}
which is defined by
\begin{equation*}
\frac{\chi^{u + k h_\sigma e_0^*}}{(\chi^{u_\sigma + h_\sigma e_0^*})^k} \mapsto \chi^{u - k u_\sigma}
\end{equation*}
for any $u \in M$, $k \in \NN$ such that $u + k h_\sigma  e_0^* \in \tau^\vee \cap M_0$. This shows that $X = \Proj \CC[\tau^\vee \cap M_0]$.

In order to prove $\partial X = \Proj \CC[\tau^\vee \cap M_0]/L$, we have to check that, for every cone $\sigma \in \Sigma(n)$, the homogeneous localisation $L_{(\chi^{u_\sigma + h_\sigma e_0^*})}$ coincides with the ideal of the toric boundary of $\toric{\CC}{\sigma}$ under the ring isomorphism \eqref{eq:anello_affine_sigma_isom_localizzazione _omogenea}. So, let us fix a cone $\sigma \in \Sigma(n)$ and elements $u\in M$ and $k \in \NN$ such that $u - k u_\sigma \in \sigma^\vee$. The element $\chi^{u+k h_\sigma e_0^*} / (\chi^{u_\sigma + h_\sigma e_0^*})^k \in \CC[\tau^\vee \cap M_0]_{(\chi^{u_\sigma + h_\sigma e_0^*})}$ lies in $L_{(\chi^{u_\sigma + h_\sigma e_0^*})}$ if and only if $\chi^{u+k h_\sigma e_0^*} \in \left( L \colon (\chi^{u_\sigma + h_\sigma e_0^*})^\infty \right)$, i.e.\ there exists $m \in \NN$ such that
\[
u + k h_\sigma e_0^* + m (u_\sigma + h_\sigma e_0^*) \in \interior(\tau^\vee).
\]
In order to check this we need to pair this vector of $M_0$ with the rays of $\tau$, i.e.\ $\xi_\rho = b_\rho \rho - a_\rho e_0$ as $\rho \in \Sigma(1)$. We distinguish two cases:
\begin{itemize}
\item the ray $\rho$ lies in $\sigma$; then $b_\rho \langle u_\sigma, \rho \rangle - a_\rho h_\sigma = 0$; for any $m \in \NN$ we have
\[
\langle u + k h_\sigma e_0^* + m (u_\sigma + h_\sigma e_0^*) , b_\rho \rho - a_\rho e_0 \rangle = b_\rho \langle u - k u_\sigma, \rho \rangle,
\]
which is positive if and only if $\langle u - k u_\sigma, \rho \rangle > 0$;
\item the ray $\rho$ does not lie in $\sigma$; then $b_\rho \langle u_\sigma, \rho \rangle - a_\rho h_\sigma > 0$; then
\begin{gather*}
\langle u + k h_\sigma e_0^* + m (u_\sigma + h_\sigma e_0^*) , b_\rho \rho - a_\rho e_0 \rangle = \\
= b_\rho \langle u, \rho \rangle - a_\rho k h_\sigma + m (b_\rho \langle u_\sigma, \rho \rangle - a_\rho h_\sigma)
\end{gather*}
is positive for $m$ big enough.
\end{itemize}
This shows that the element $\chi^{u+k h_\sigma e_0^*} / (\chi^{u_\sigma + h_\sigma e_0^*})^k$ lies in the homogeneous localisation of $L$ if and only if for every $\rho \in \sigma(1)$ we have $\langle u - k u_\sigma, \rho \rangle > 0$, or equivalently if $u - k u_\sigma$ lies in the ideal of the toric boundary of $\toric{\CC}{\sigma}$.
\end{proof}

In the following lemma we compare the homogeneous coordinate rings of a polarised toric variety and of its affine cone. We deduce an alternate description of closed subschemes of a polarised toric variety.

\begin{lemma} \label{lemma:cox_coordinates_proj_toric_and_affine_cone}
Let $N$ be a lattice of rank $n$, let $\tau$ be a $(n+1)$-dimensional strongly convex rational polyhedral cone in the lattice $N_0 = N \oplus \ZZ e_0$ such that $e_0 \in \interior(\tau)$, and let $(X,D)$ and $(\Sigma, \phiv)$ be the pairs associated to $\tau$ via Lemma \ref{lemma:polarised_projective_varieties}.
Consider the affine toric variety $C = \Spec \CC[\tau^\vee \cap M_0]$. Let $S_X$ and $S_C$ be the homogeneous coordinate rings of $X$ and $C$, respectively.

For every ray $\rho \in \Sigma(1)$, let $\xi_\rho = b_\rho \rho - a_\rho e_0 \in \tau(1)$ be the corresponding ray of $\tau$, where $\phiv(\rho) = a_\rho / b_\rho$ for $a_\rho \in \ZZ$ and $b_\rho \in \NN^+$ such that $\gcd(a_\rho, b_\rho) = 1$.
Consider the ring homomorphism $S_X \to S_C$ given by $x_\rho \mapsto (x_{\xi_\rho})^{b_\rho}$.

Let $J_X$ be a $G_\Sigma$-homogeneous ideal in $S_X$ and let $H \subseteq \CC[\tau^\vee \cap M_0] \simeq (S_C)_0$ be the degree zero part of the ideal $J_X S_C \subseteq S_C$. If $H$ is homogeneous with respect to the $\NN$-grading of $\CC[\tau^\vee \cap M_0]$, then the closed subscheme of $X$ defined by the ideal $J_X$ coincides with $\Proj \CC[\tau^\vee \cap M_0] / H$.
\end{lemma}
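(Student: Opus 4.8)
The plan is to verify the claimed equality of closed subschemes of $X$ locally on the affine charts of $X = \Proj \CC[\tau^\vee \cap M_0]$, exploiting the fact that both sides are already described as Proj of quotients of the section ring (or of its Cox ring). First I would fix a maximal cone $\sigma \in \Sigma(n)$; by Lemma~\ref{lemma:polarised_toric_proj} (more precisely the isomorphism \eqref{eq:anello_affine_sigma_isom_localizzazione _omogenea}) the affine chart $\toric{\CC}{\sigma} \subseteq X$ corresponds to the homogeneous localisation $\CC[\tau^\vee \cap M_0]_{(\chi^{u_\sigma + h_\sigma e_0^*})} \cong \CC[\sigma^\vee \cap M]$, where $F_\sigma \subseteq (u_\sigma + h_\sigma e_0^*)^\perp$. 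On this chart, $\Proj \CC[\tau^\vee \cap M_0]/H$ is cut out by the homogeneous localisation $H_{(\chi^{u_\sigma + h_\sigma e_0^*})}$. So it suffices to show that, under \eqref{eq:anello_affine_sigma_isom_localizzazione _omogenea}, this coincides with the ideal $(J_X)_{(x^{\hat\sigma})}$ which, by Remark~\ref{rmk:toric_varieties_general_basis}, defines the closed subscheme $\{J_X = 0\} \cap \toric{\CC}{\sigma}$ in $S_X{}_{(x^{\hat\sigma})} \cong \CC[\sigma^\vee \cap M]$.

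The key step is to chase the ring map $S_X \to S_C$, $x_\rho \mapsto (x_{\xi_\rho})^{b_\rho}$, through the Cox-to-monomial dictionaries on both sides. On the $X$-side, $\mathrm{Cox}$ sends $\chi^{u}$ (for $u \in \sigma^\vee \cap M$) to $\prod_{\rho} x_\rho^{\langle u, \rho\rangle}$; on the $C$-side, $\mathrm{Cox}$ sends $\chi^{v}$ (for $v \in \tau^\vee \cap M_0$) to $\prod_{\rho} x_{\xi_\rho}^{\langle v, \xi_\rho\rangle}$. I would check that the composite $S_X \to S_C \to \CC[\tau^\vee \cap M_0]$, restricted to degree-zero elements after inverting $x^{\hat\sigma}$ on the left and $\chi^{u_\sigma+h_\sigma e_0^*}$ on the right, agrees with the map that on monomials sends $\chi^{u - k u_\sigma}$ (the image of $\chi^u/(\chi^{u_\sigma})^k$-type elements) to $\chi^{u + k h_\sigma e_0^*}/(\chi^{u_\sigma + h_\sigma e_0^*})^k$, i.e.\ it is precisely the inverse of \eqref{eq:anello_affine_sigma_isom_localizzazione _omogenea}. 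Concretely, $x_\rho \mapsto x_{\xi_\rho}^{b_\rho}$ and $\langle v, \xi_\rho\rangle = \langle v, b_\rho\rho - a_\rho e_0\rangle$ produce exactly the exponent bookkeeping that matches $\phiv(\rho) = a_\rho/b_\rho$, so the pullback of the toric chart coordinate ring is compatible. Granting this compatibility, the degree-zero part of $J_X S_C$ localised at $\chi^{u_\sigma+h_\sigma e_0^*}$ is, by exactness of localisation and the isomorphism, carried onto $(J_X)_{(x^{\hat\sigma})}$, which is what we want; the hypothesis that $H$ is $\NN$-homogeneous is exactly what makes $\Proj \CC[\tau^\vee\cap M_0]/H$ well-defined and makes its $\sigma$-chart equal to $H_{(\chi^{u_\sigma+h_\sigma e_0^*})}$.

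Finally I would note that these identifications are compatible with the gluing over intersections $\toric{\CC}{\sigma} \cap \toric{\CC}{\sigma'}$, since they are all induced by the single global ring map $S_X \to S_C$ and the intrinsic gluing data of the fan $\Sigma$; hence the locally-defined closed subschemes patch to give the asserted global equality of closed subschemes of $X$. The main obstacle I anticipate is purely bookkeeping: carefully matching the two sets of Cox exponents through the substitution $x_\rho \mapsto x_{\xi_\rho}^{b_\rho}$ and confirming that taking the degree-zero part commutes with everything in sight — in particular that $J_X S_C$ being generated in the relevant $G_\Sigma$-degrees forces its degree-zero part $H$ to localise correctly, rather than picking up spurious elements. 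There is no deep conceptual difficulty beyond organising this dictionary, and the hypothesis on $H$ removes the only potential subtlety (that the naive Proj of a non-homogeneous ideal would be meaningless).
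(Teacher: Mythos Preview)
Your overall strategy matches the paper's: fix a maximal cone $\sigma$, reduce to showing $H_{(\chi^{u_\sigma+h_\sigma e_0^*})} = (J_X)_{(x^{\hat\sigma})}$ under the isomorphism \eqref{eq:anello_affine_sigma_isom_localizzazione _omogenea}, and chase through the Cox dictionaries. That part is fine.

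There is, however, a genuine gap at the step you describe as ``by exactness of localisation and the isomorphism, carried onto $(J_X)_{(x^{\hat\sigma})}$''. Two things go wrong here. First, the ``composite $S_X \to S_C \to \CC[\tau^\vee \cap M_0]$'' you invoke does not exist as a ring map: $\CC[\tau^\vee \cap M_0]$ is the $G_\tau$-degree-zero \emph{subring} of $S_C$, not a quotient, so there is no projection $S_C \to \CC[\tau^\vee \cap M_0]$ to compose with. Second, and more seriously, even once you set up the maps correctly you are implicitly using that extending $J_X$ along $S_X \to S_C$ and then contracting back recovers $(J_X)_{x^{\hat\sigma}}$ in the localisation. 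This is \emph{not} a formal consequence of exactness of localisation: extension-then-contraction of ideals along a ring map can strictly enlarge the ideal. What makes it work here is that $S_C$ is a finite free $S_X$-module (a basis is given by the monomials $\prod_\rho x_{\xi_\rho}^{c_\rho}$ with $0 \le c_\rho < b_\rho$), hence $S_X \to S_C$ is faithfully flat, and then one invokes the standard fact that for a faithfully flat extension, $J^{ec} = J$ (e.g.\ \cite[Theorem 7.5(ii)]{matsumura}).

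The paper's proof makes exactly this structure explicit: it introduces the ideal $K = J_X (S_C)_{x^{\bar u_\sigma}}$ in the big localised ring, and shows that \emph{both} $(J_X)_{(x^{\hat\sigma})}$ and $H_{(\chi^{\bar u_\sigma})}$ arise as the contraction of $K$ along the respective inclusions into $(S_C)_{x^{\bar u_\sigma}}$. The first identification is where faithful flatness is used; the second uses that $x^{\bar u_\sigma}$ has $G_\tau$-degree zero, so that taking $G_\tau$-degree-zero parts commutes with this localisation. Your sketch has all the right pieces around this step but is missing precisely this faithful-flatness input, without which the argument does not close.
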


\begin{proof}
Consider the commutative diagram
\[
\xymatrix{
\ZZ^{\tau(1)} \ar[d]^b \ar[r]^{r_\tau} & N_0 \ar@{->>}[d]^{\mathrm{pr}} \\
\ZZ^{\Sigma(1)} \ar[r]^{r_\Sigma} & N
}
\]
where $r_\Sigma$ is the ray map of $X$, $r_\tau$ is the ray map of $C$, $\mathrm{pr}$ is the natural projection, and $b$ is the diagonal matrix with entries $b_\rho$. Consider the dual maps $r_\Sigma^*$ and $r_\tau^*$ and the following commutative diagram with exact rows, where $G_\Sigma$ is the divisor class group of $X$ and $G_\tau$ is the divisor class group of $C$.
\[
\xymatrix{
0 \ar[r] & M \ar@{^{(}->}[d] \ar[r]^{r_\Sigma^*} & \ZZ^{\Sigma(1)} \ar[d]^b \ar[r] & G_\Sigma \ar[d] \ar[r] & 0 \\
0 \ar[r] & M_0 \ar[r]^{r_\tau^*} & \ZZ^{\tau(1)} \ar[r] & G_\tau \ar[r] & 0
}
\] 
The ring homomorphism $S_X \to S_C$ is homogeneous with respect to the group homomorphism $G_\Sigma \to G_\tau$. In particular, the ideal $J_X S_C \subseteq S_C$ is $G_\tau$-homogeneous.

Fix a full dimensional cone $\sigma \in \Sigma(n)$ and let $u_\sigma \in M$ and $h_\sigma \in \NN^+$ be such that the hyperplane $(u_\sigma + h_\sigma e_0^*)^\perp$ contains the corresponding face $F_\sigma$ of $\tau$, as in the proof of Lemma~\ref{lemma:polarised_toric_proj}. We set $\bar{u}_\sigma = u_\sigma + h_\sigma e_0^* \in M_0$ for brevity. We have to show that the ideal $(J_X)_{(x^{\hat{\sigma}})} \subseteq (S_X)_{(x^{\hat{\sigma}})} \simeq \CC[\sigma^\vee \cap M]$ is mapped to $H_{(\chi^{\bar{u}_\sigma})}$ via the isomorphism \eqref{eq:anello_affine_sigma_isom_localizzazione _omogenea}.

Since $\bar{u}_\sigma$ is zero on the face $F_\sigma$ and strictly positive on $\tau \setminus F_\sigma$, a Cox coordinate $x_\xi$ of $C$ appear in the monomial $x^{\bar{u}_\sigma} \in S_C$ if and only if $\xi \notin F_\sigma$. This implies that there is a ring homomorphism
\begin{equation} \label{eq:localised_S_X_to_S_C}
(S_X)_{x^{\hat{\sigma}}} \longrightarrow (S_C)_{x^{\bar{u}_\sigma}}
\end{equation}
that is the localisation of $S_X \to S_C$ defined above. At this point it is not difficult to show that there is a commutative diagram of rings
\begin{equation*}
\xymatrix{
& \CC[\tau^\vee \cap M_0] \ar@{=}[r]^{\qquad \mathrm{Cox}_\tau} \ar[d] & (S_C)_0 \ar[d] \ar@{^{(}->}[r] & S_C \ar[d] \\
\CC[\tau^\vee \cap M_0]_{(\chi^{\bar{u}_\sigma})} \ar@{=}_{\eqref{eq:anello_affine_sigma_isom_localizzazione _omogenea}}[d] \ar@{^{(}->}[r] & \CC[\tau^\vee \cap M_0]_{\chi^{\bar{u}_\sigma}} \ar@{=}[r]^{\ \ \ \ \mathrm{Cox}_{F_\sigma}} & (S_C)_{(x^{\bar{u}_\sigma})} \ar@{^{(}->}[r] & (S_C)_{x^{\bar{u}_\sigma}} \\
\CC[\sigma^\vee \cap M] \ar@{=}[rr]^{\mathrm{Cox}_\sigma} & & (S_X)_{(x^{\hat{\sigma}})} \ar@{^{(}->}[r] & (S_X)_{x^{\hat{\sigma}}} \ar[u]^{\eqref{eq:localised_S_X_to_S_C}} & S_X \ar[l] \ar@/_/[luu]
}
\end{equation*}
where the equality symbols stand for isomorphisms. 
Now consider the ideal $K = J_X (S_C)_{x^{\bar{u}_\sigma}} \subseteq (S_C)_{x^{\bar{u}_\sigma}}$. 

Since $S_C$ is a finite free $S_X$-module, $S_C$ is faithfully flat over $S_X$. Therefore, also the localised homomorphism \eqref{eq:localised_S_X_to_S_C} is faithfully flat. By \cite[Theorem 7.5(ii)]{matsumura} the contraction of $K$ to $(S_X)_{x^{\hat{\sigma}}}$ is the extension of $J_X$. This implies that $(J_X)_{(x^{\hat{\sigma}})}$ is the contraction of $K$ to $(S_X)_{(x^{\hat{\sigma}})}$ along the homomorphisms in the diagram above.

On the other hand, it is clear that $K$ is the extension of $J_X S_C$ to $(S_C)_{x^{\bar{u}_\sigma }}$. Since $x^{\bar{u}_\sigma}$ has degree zero with respect to the $G_\tau$-grading of $S_C$, it is not difficult to check that the extension of $H = (J_X S_C) \cap (S_C)_0$ to $(S_C)_{(x^{\bar{u}_\sigma})} \simeq \CC[\tau^\vee \cap M_0]_{\chi^{\bar{u}_\sigma}}$ is the contraction of $K$. It follows that the ideal $H_{(\chi^{\bar{u}_\sigma})}$ is the contraction of $K$ to $\CC[\tau^\vee \cap M_0]_{(\chi^{\bar{u}_\sigma})}$.

Since the two ideals that must be checked to coincide are both contractions of the same ideal $K$, we are done.
\end{proof}

\section{Deformations of affine toric varieties after A.~Mavlyutov} \label{sec:deformations_affine_mavlyutov}

In this section we recall the work \cite{mavlyutov} by Anvar Mavlyutov on the deformations of affine toric varieties. We have rewritten a detailed proof, as it will be useful for our generalisations, and we have taken this opportunity to fill in details missing from Mavlyutov's original paper. In so doing we have reformulated many of his statements in terms of deformation datum.

\begin{definition} \label{def:deformation_datum}
Let $N$ be a lattice and $\sigma \subseteq N_\RR$ be a strongly convex rational polyhedral cone with dimension $\rank N$. A \emph{deformation datum for $(N,\sigma)$} is a tuple $(Q, Q_0, Q_1, \dots, Q_k, w)$ where $w \in M$ and $Q$, $Q_0$, $Q_1$, \dots, $Q_k$ are non-empty rational polyhedra in $N_\RR$ such that the following conditions are satisfied:
\begin{enumerate}
\item[(i)] $Q \subseteq \sigma$;
\item[(ii)] $0 \notin Q$;
\item[(iii)] $Q = Q_0 + Q_1 + \cdots + Q_k$;
\item[(iv')] for every vertex $v \in \vertices(Q)$, there exist vertices $v_0 \in \vertices(Q_0)$, $v_1 \in \vertices(Q_1)$, \dots, $v_k \in \vertices(Q_k)$ such that $v = v_0 + v_1 + \cdots + v_k$ and
\begin{equation*}
\# \left\{ i \in \{0,1,\dots,k \} \mid v_i \notin N \right\} \leq 1;
\end{equation*}
\item[(v)] the minimum of $w$ on $Q$ exists and is not smaller than $-1$;
\item[(vi)] every vertex of the polyhedron $\sigma \cap \{ n \in N_\RR \vert \langle w, n \rangle = -1 \}$ is contained in $\RR^+ \cdot Q$.
\end{enumerate}
A \emph{$\partial$-deformation datum for $(N,\sigma)$} is a deformation datum $(Q, Q_0, Q_1, \dots, Q_k, w)$ for $(N,\sigma)$ such that the following further condition is satisfied:
\begin{enumerate}
\item[(iv)] $Q_1$, \dots, $Q_k$ are lattice polyhedra.
\end{enumerate}
\end{definition}

It is immediate to see that (iv) implies (iv').

\begin{notation} \label{not:tilde_N_tilde_sigma_tilde_w}
If $(Q, Q_0, Q_1, \dots, Q_k, w)$ is a deformation datum for $(N,\sigma)$, then we set
\begin{align*}
\tilde{N} &:= N \oplus \ZZ e_1 \oplus \cdots \oplus \ZZ e_k \\
\tilde{\sigma} &:= \cone{\sigma, Q_0-e_1-\cdots-e_k, Q_1 + e_1, \dots, Q_k + e_k} \subseteq \tilde{N}_\RR \\
\tilde{w} &:= w - \sum_{i=1}^k \left\lfloor \min_{Q_i} w \right\rfloor e_i^* \in \tilde{M}
\end{align*}
\end{notation}

\begin{remark}
If $N$ is a lattice, $\sigma \subseteq N_\RR$ is a $(\rank N)$-dimensional strongly convex rational polyhedral cone,  $w \in M$, and $Q = \conv{ \vertices( \sigma \cap \{v \in N_\RR \vert \langle w, v \rangle = -1 \})} = Q_0 + Q_1 + \cdots + Q_k$ where $Q_0$ is a rational polytope and $Q_1, \dots, Q_k$ are lattice polytopes, then $(Q, Q_0, Q_1, \dots, Q_k,w)$ is a $\partial$-deformation datum for $(N,\sigma)$. Moreover, if in addition $Q_i \subseteq  \{v \in N_\RR \vert \langle w, v \rangle = 0 \} =: w^\perp$ for $i=1, \dots, k$, then $\tilde{w} = w$.
\end{remark}

\begin{lemma} \label{lemma:sigma_tilde_is_full_dimensional}
Let $N$ be a lattice of rank $n$, let $\sigma \subseteq N_\RR$ be a strongly convex rational polyhedral cone of dimension $n$, let $(Q,Q_0,Q_1, \dots, Q_k,w)$ be a deformation datum for $(N,\sigma)$, and let $\tilde{N}$ and $\tilde{\sigma}$ be as in Notation~\ref{not:tilde_N_tilde_sigma_tilde_w}. Then $\tilde{\sigma}$ is a strongly convex rational polyhedral cone in $\tilde{N}$ of dimension $n+k$ such that $\sigma = \tilde{\sigma} \cap N_\RR$.
\end{lemma}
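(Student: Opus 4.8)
The plan is to verify, in order, that $\tilde\sigma$ is a rational polyhedral cone of dimension $n+k$, that $\sigma=\tilde\sigma\cap N_\RR$, and that $\tilde\sigma$ is strongly convex; only the last assertion requires real work. A convenient preliminary observation is that $\rec(Q_i)\subseteq\sigma$ for every $i$: by conditions~(i) and~(iii) of Definition~\ref{def:deformation_datum} we have $\rec(Q_0)+\cdots+\rec(Q_k)=\rec(Q)\subseteq\rec(\sigma)=\sigma$, and since $0$ belongs to each $\rec(Q_j)$ we may place a given $r\in\rec(Q_i)$ in the $i$-th slot and $0$ elsewhere to conclude $r\in\sigma$. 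In particular each $\rec(Q_i)$ is strongly convex, so $Q_i=\conv{\vertices(Q_i)}+\rec(Q_i)$ with $\vertices(Q_i)$ finite and $\rec(Q_i)$ a finitely generated rational cone; hence $\cone{Q_i+e_i}$ is the conical hull of the finite rational set $\{v+e_i\mid v\in\vertices(Q_i)\}$ together with finitely many generators of $\rec(Q_i)$, and likewise for $\cone{Q_0-e_1-\cdots-e_k}$, so $\tilde\sigma$ is the conical hull of finitely many rational vectors. For the dimension, $\sigma\subseteq\tilde\sigma$ forces $N_\RR=\spanR{\sigma}\subseteq\spanR{\tilde\sigma}$, while for each $i$ and any $q_i\in Q_i$ the generator $q_i+e_i\in\tilde\sigma$ is congruent to $e_i$ modulo $N_\RR$; therefore $\spanR{\tilde\sigma}$ maps onto $\tilde N_\RR/N_\RR$ and thus equals $\tilde N_\RR$, giving $\dim\tilde\sigma=n+k$.

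The inclusion $\sigma\subseteq\tilde\sigma\cap N_\RR$ is clear. For the converse I would write an arbitrary element of $\tilde\sigma$ as $s+w_0+w_1+\cdots+w_k$ with $s\in\sigma$, $w_i\in\cone{Q_i+e_i}$ for $i\ge1$, and $w_0\in\cone{Q_0-e_1-\cdots-e_k}$; using the generators above, $w_i=\beta_i\mu_i+\rho_i$ for $i\ge1$ with $\beta_i\ge0$, $\mu_i\in Q_i$, $\rho_i\in\rec(Q_i)$ and $e_i$-coordinate of $w_i$ equal to $\beta_i$, while $w_0=\beta_0\mu_0+\rho_0$ with $\mu_0\in Q_0$, $\rho_0\in\rec(Q_0)$ and $e_i$-coordinate of $w_0$ equal to $-\beta_0$ for each $i=1,\dots,k$. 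If the element lies in $N_\RR$ then all its $e_i$-coordinates vanish, which forces $\beta_0=\beta_1=\cdots=\beta_k=:\beta$; the element then equals $s+\rho_0+\cdots+\rho_k+\beta(\mu_0+\mu_1+\cdots+\mu_k)$, and each summand lies in $\sigma$ --- the $s$ and the $\rho_i$ because $\rec(Q_i)\subseteq\sigma$, and $\beta(\mu_0+\cdots+\mu_k)$ because $\mu_0+\cdots+\mu_k\in Q_0+\cdots+Q_k=Q\subseteq\sigma$ by~(iii) and~(i). Hence $\sigma=\tilde\sigma\cap N_\RR$.

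For strong convexity it suffices to exhibit $\tilde\ell\in\tilde M_\RR$ that is strictly positive on every generator of $\tilde\sigma$, for then $\tilde\ell>0$ on $\tilde\sigma\setminus\{0\}$ and so $\tilde\sigma$ contains no line. Since $\sigma$ is strongly convex, fix $\ell\in M_\RR$ with $\ell>0$ on $\sigma\setminus\{0\}$. By~(i) and~(ii), $Q\subseteq\sigma\setminus\{0\}$, so $\ell>0$ on $Q$; and since $\rec(Q)\subseteq\sigma$ the minimum $\min_Q\ell$ is attained at a vertex of $Q$, whence $\min_Q\ell>0$. Because the minimum of a linear functional over a Minkowski sum is the sum of the minima, $\sum_{i=0}^{k}\min_{Q_i}\ell=\min_Q\ell>0$, so I may choose reals $c_1,\dots,c_k$ with $c_i>-\min_{Q_i}\ell$ for every $i$ and $\sum_{i=1}^{k}c_i<\min_{Q_0}\ell$. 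Setting $\tilde\ell:=\ell+\sum_{i=1}^{k}c_i\,e_i^*$, one checks the positivity on each kind of generator: on a ray of $\sigma$, $\tilde\ell=\ell>0$; on $v+e_i$ with $v\in\vertices(Q_i)$, $\tilde\ell=\ell(v)+c_i\ge\min_{Q_i}\ell+c_i>0$; on a ray of $\rec(Q_i)\subseteq\sigma$, $\tilde\ell=\ell>0$; on $v_0-e_1-\cdots-e_k$ with $v_0\in\vertices(Q_0)$, $\tilde\ell=\ell(v_0)-\sum_{i=1}^{k}c_i\ge\min_{Q_0}\ell-\sum_{i=1}^{k}c_i>0$; and on a ray of $\rec(Q_0)\subseteq\sigma$, $\tilde\ell=\ell>0$. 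This step is the crux: the generators coming from $Q_1+e_1,\dots,Q_k+e_k$ push the $c_i$ to be large whereas the generator $Q_0-e_1-\cdots-e_k$ pushes $\sum_i c_i$ to be small, and the two requirements can be met simultaneously only because of the strict inequality $\min_Q\ell>0$, which is exactly where conditions~(i), (ii), (iii) of a deformation datum enter. Conditions~(iv'), (v), (vi) are not needed for this lemma.
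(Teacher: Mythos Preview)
Your proof is correct and follows the same strategy as the paper, which makes the explicit choice $c_i=-\min_{Q_i}\ell+\tfrac{1}{k+1}\min_Q\ell$ for the strong-convexity functional and establishes full-dimensionality via the dual statement that only the zero functional vanishes on $\tilde\sigma$. One small notational slip: your expression $w_i=\beta_i\mu_i+\rho_i$ with $\mu_i\in Q_i\subseteq N_\RR$ and $\rho_i\in\rec(Q_i)\subseteq N_\RR$ omits the $e_i$-component $\beta_i e_i$, though you correctly track it in the next clause.
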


We will postpone the proof of Lemma~\ref{lemma:sigma_tilde_is_full_dimensional} to page \pageref{proof:lemma_sigma_tilde_full_dimensional}.

\begin{theorem}[Mavlyutov \cite{mavlyutov}] \label{thm:mavlyutov}
Let $N$ be a lattice, let $\sigma \subseteq N_\RR$ be a strongly convex rational polyhedral cone with dimension $\rank N$, let $(Q,Q_0,Q_1, \dots, Q_k,w)$ be a deformation datum for $(N,\sigma)$, and let $\tilde{N}$, $\tilde{\sigma}$ and $\tilde{w}$ be as in Notation~\ref{not:tilde_N_tilde_sigma_tilde_w}.
Let $X$ be the affine toric variety associated to $\sigma$ and let $\tilde{X}$ be the affine toric variety associated to $\tilde{\sigma}$.

\smallskip

{\rm (A)} Then the toric morphism $X \to \tilde{X}$, induced by the inclusion $N \into \tilde{N}$, is a closed embedding and identifies $X$ with the closed subscheme of $\tilde{X}$ associated to the homogeneous ideal generated by the following binomials in the Cox coordinates of $\tilde{X}$:
\begin{equation*}
\prod_{\substack{\xi \in \tilde{\sigma}(1) \colon \\ \langle e_i^*, \xi \rangle > 0}} x_\xi^{\langle e_i^*, \xi \rangle} - 
\prod_{\substack{\xi \in \tilde{\sigma}(1) \colon \\ \langle e_i^*, \xi \rangle < 0}} x_\xi^{-\langle e_i^*, \xi \rangle}
\end{equation*}
for $i = 1, \dots, k$. Moreover, these binomials form a regular sequence.

\smallskip

{\rm (B)} 
Let $t_1, \dots, t_k$ be the standard coordinates on $\AA^k_\CC$. Consider the closed subscheme $\cX$ of $\tilde{X} \times_{\Spec \CC} \AA^k_\CC = \toric{\CC[t_1, \dots, t_k]}{\tilde{\sigma}}$ defined by the homogeneous ideal generated by the following trinomials in Cox coordinates:
\begin{equation} \label{eq:trinomials}
\prod_{\substack{\xi \in \tilde{\sigma}(1) \colon \\ \langle e_i^*, \xi \rangle > 0}} x_\xi^{\langle e_i^*, \xi \rangle} - 
\prod_{\substack{\xi \in \tilde{\sigma}(1) \colon \\ \langle e_i^*, \xi \rangle < 0}} x_\xi^{-\langle e_i^*, \xi \rangle} -
t_i \prod_{\xi \in \tilde{\sigma}(1)} x_\xi^{\langle \tilde{w}, \xi \rangle} \prod_{\substack{\xi \in \tilde{\sigma}(1) \colon \\ \langle e_i^*, \xi \rangle < 0}} x_\xi^{-\langle e_i^*, \xi \rangle}
\end{equation}
for $i=1, \dots, k$. Then the morphism $\cX \to \AA^k_\CC$ induces a formal deformation of $X$ over $\CC [ \![ t_1, \dots, t_k ] \! ]$.
\end{theorem}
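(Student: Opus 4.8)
The plan is to reduce everything to the affine chart level, where the deformation is visibly free over $\CC[t_1,\dots,t_k]$. First I would recall from Remark~\ref{rmk:toric_varieties_general_basis} that, for each cone $\tilde\sigma$ itself (we only have one maximal cone since $\tilde X$ is affine), the total coordinate ring of $\toric{\CC[t_1,\dots,t_k]}{\tilde\sigma}$ is the polynomial ring $\tilde S = \CC[t_1,\dots,t_k][x_\xi \mid \xi \in \tilde\sigma(1)]$ with its $G_{\tilde\sigma}$-grading, and that the coordinate ring of the affine scheme $\toric{\CC[t_1,\dots,t_k]}{\tilde\sigma}$ is the degree-zero part $\tilde S_{(x^{\hat{\tilde\sigma}})}$. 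Since by Lemma~\ref{lemma:sigma_tilde_is_full_dimensional} $\tilde\sigma$ is full-dimensional, $\hat{\tilde\sigma}$ is empty, so in fact $\toric{\CC[t_1,\dots,t_k]}{\tilde\sigma} = \Spec \left( \tilde S_0 \right)$ where $\tilde S_0 = \CC[t_1,\dots,t_k][\tilde\sigma^\vee \cap \tilde M]$. So $\cX$ is cut out, inside the affine scheme $\Spec \CC[t_1,\dots,t_k][\tilde\sigma^\vee \cap \tilde M]$, by the degree-zero parts of the trinomials~\eqref{eq:trinomials} under the Cox isomorphism; call the resulting elements $g_1,\dots,g_k$. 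By part (A), the degree-zero parts of the leading binomials are a regular sequence $f_1,\dots,f_k$ in $\CC[\tilde\sigma^\vee \cap \tilde M]$ cutting out $X$, and $g_i = f_i - t_i h_i$ for suitable monomials $h_i$ (the verification that the trinomial has degree zero, hence descends, and that the Cox-monomial $\prod x_\xi^{\langle\tilde w,\xi\rangle}\prod_{\langle e_i^*,\xi\rangle<0} x_\xi^{-\langle e_i^*,\xi\rangle}$ pulls back to an honest monomial $\chi^{m_i}$ with $m_i \in \tilde\sigma^\vee \cap \tilde M$, uses condition (v) and the definition of $\tilde w$, and is already implicit in Mavlyutov's setup — I would cite the relevant computation in the proof of Theorem~\ref{thm:mavlyutov}(A) and the lemmas preceding it rather than redo it).

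Next I would prove the key flatness statement: the quotient ring $R := \CC[t_1,\dots,t_k][\tilde\sigma^\vee \cap \tilde M]/(g_1,\dots,g_k)$ is flat — indeed free — as a $\CC[t_1,\dots,t_k]$-module. Write $P = \CC[\tilde\sigma^\vee \cap \tilde M]$, so $P$ is a free $\CC$-module with monomial basis $\{\chi^m\}_{m \in \tilde\sigma^\vee \cap \tilde M}$. I want to show $g_1,\dots,g_k$ is a regular sequence on $P[t_1,\dots,t_k]$ and that the quotient is $t$-flat. The cleanest route: introduce the $\ZZ^k$-grading (or an $\NN^k$-filtration) in which $\deg t_i$ is chosen so that $g_i$ becomes homogeneous — concretely, weight $\chi^m$ by $\langle e_i^*, \ell\rangle$-data so that the two binomial terms $f_i$ and the term $t_i h_i$ all have the same multidegree; the point is that each $g_i$ is a difference/sum of three monomials (in the $x$-coordinates) and one checks the exponents match after assigning $t_i$ the appropriate weight. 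Under such a grading, the associated graded of $(g_1,\dots,g_k)$ contains $(f_1,\dots,f_k)$, a regular sequence by (A); since a sequence whose initial forms are regular is itself regular, $g_1,\dots,g_k$ is regular on $P[t_1,\dots,t_k]$, and the Koszul complex gives a free resolution. Then $R = P[t]/(g)$ has, as a $\CC[t]$-module, the same Hilbert function in the monomial grading as $P[t]/(f)[t\text{-shifted}]$, which is manifestly $\CC[t]$-free because $P/(f) = \CC[X]$ is a $\CC$-vector space and tensoring up is free. Alternatively — and this is the argument I'd actually write — observe directly that $g_i = f_i - t_i h_i$ expresses the "new" monomial $t_i h_i$ in terms of lower ones, so by Mavlyutov's combinatorial choice the monomials $\{\chi^m \otimes t^a : m \text{ not in the initial ideal of } (f_1,\dots,f_k),\ a \in \NN^k\}$ form a $\CC$-basis of $R$, exhibiting $R$ as $\CC[t]$-free on the classes of $\{\chi^m : m \notin \mathrm{in}(f_\bullet)\}$.

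Then flatness of $\cX \to \AA^k_\CC$ is immediate, and it restricts correctly: base-changing along $\CC[t]\onto \CC[t]/(t_1,\dots,t_k)$ kills the $t_i h_i$ terms and recovers exactly $\CC[\tilde\sigma^\vee \cap \tilde M]/(f_1,\dots,f_k) = \CC[X]$ by part (A), so the central fibre is $X$. To package this as a \emph{formal} deformation of $X$ over $\CC[\![t_1,\dots,t_k]\!]$ in the sense defined in the introduction, I would set $A = \CC[\![t_1,\dots,t_k]\!]$ with maximal ideal $\frakm_A$, and for each $n$ let $X_n := \cX \times_{\AA^k_\CC} \Spec A/\frakm_A^{n+1}$; flatness of $X_n$ over $A/\frakm_A^{n+1}$ follows from the flatness of $\cX$ over $\AA^k_\CC$ by base change, the compatibility diagrams $X_{n+1}\times_{A/\frakm_A^{n+2}} A/\frakm_A^{n+1} \cong X_n$ are tautological from associativity of fibre products, and the $0$th term is $X_0 = X$ by the fibre computation above. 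This is exactly statement (B).

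The main obstacle is the regular-sequence/freeness step for $g_1,\dots,g_k$: part (A) only gives regularity of the \emph{binomials} $f_i$, and one must transfer this to the \emph{trinomials} $g_i$ over the larger base $\CC[t_1,\dots,t_k]$. The right tool is the initial-form principle (a sequence in a graded ring whose initial forms form a regular sequence is itself regular), together with the observation that the $t_i$-linear term $t_i h_i$ can be placed in the ideal generated by the initial ideal of $(f_\bullet)$ — equivalently, Mavlyutov's conditions (v) and (vi) are precisely what guarantee that the exponent vector of $h_i$ is strictly dominated, in the relevant term order, by one of the two binomial monomials in $f_i$, so that $\mathrm{in}(g_i) = f_i$ (or one of its monomials) up to units. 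Once the bookkeeping of these exponent vectors — which is the heart of Mavlyutov's computation and which I would quote from the proof of part (A) — is in place, the rest is formal.
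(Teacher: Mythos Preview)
Your proposal has a genuine gap right at the first step, the passage to the degree-zero affine chart. The trinomials \eqref{eq:trinomials} are homogeneous of \emph{non-zero} degree in the Cox ring $\tilde S$ (their degree is $\deg y_i = \deg z_i \in G_{\tilde\sigma}$, which is nonzero because $e_i^* \notin \tilde\sigma^\vee$: it is negative on $Q_0 - e_1 - \cdots - e_k$). So the trinomials do not themselves lie in $\tilde S_0 \simeq \CC[t][\tilde\sigma^\vee \cap \tilde M]$, and there is no such thing as ``the degree-zero part of the $i$th trinomial under the Cox isomorphism''. What cuts out $\cX$ inside $\Spec \tilde S_0$ is the degree-zero part of the \emph{ideal} generated by the trinomials, and this is not generated by $k$ elements: already at $t=0$ the ideal $I = \ker\bigl(\CC[\tilde\sigma^\vee \cap \tilde M] \to \CC[\sigma^\vee \cap M]\bigr)$ is generated by all the binomials $\chi^r - \chi^s$ with $r - s \in \ZZ e_1^* + \cdots + \ZZ e_k^*$ (see the proof of Lemma~\ref{lemma:cox_equations_slicing_cone}), typically far more than $k$. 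Your elements $g_1, \dots, g_k$ therefore do not exist, and the regular-sequence/freeness argument that follows has no foundation. Relatedly, part~(A) asserts that the binomials form a regular sequence \emph{in the polynomial Cox ring} $S$, not in the semigroup ring $S_0$; your sentence ``by part~(A), the degree-zero parts of the leading binomials are a regular sequence $f_1,\dots,f_k$ in $\CC[\tilde\sigma^\vee \cap \tilde M]$'' is unsupported.

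The paper sidesteps all of this by never leaving the polynomial Cox ring. One first checks, via the floor-sum identity $\sum_i \lfloor \min_{Q_i} w \rfloor = \lfloor \min_Q w \rfloor$ together with conditions (v) and (vi), that every exponent in the third monomial of \eqref{eq:trinomials} is non-negative, so the trinomials honestly lie in $B = A[x_\xi \mid \xi \in \tilde\sigma(1)]$ for any artinian quotient $A = \CC[t_1,\dots,t_k]/\frakq$. The key step is then Lemma~\ref{lemma:complete_intersection_flatness_neighbourhood} (a consequence of the local flatness criterion, \cite[Theorem~22.5]{matsumura}): since the images of the trinomials in $B \otimes_A \CC = \CC[x_\xi]$ are the binomials of part~(A), already known to be a regular sequence, the quotient $B/J$ is $A$-flat. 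Finally Lemma~\ref{lemma:flatness_sheafification_graded_module} transports this flatness through sheafification to the closed subscheme $\cX \times_{\AA^k_\CC} \Spec A$. The regularity and flatness arguments thus live entirely in a polynomial ring, and the descent to the actual affine coordinate ring is handled functorially by sheafification rather than by any explicit description of generators in $\tilde S_0$.
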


\begin{remark} \label{rmk:formally_flat}
We will clarify what we mean when we say that the aforementioned closed subscheme induces a \emph{formal deformation} of $\toric{\CC}{\sigma}$ over $\CC [ \![ t_1, \dots, t_k ] \! ]$. By (A) the fibre of $\cX \to \AA^k_\CC$ over the origin of $\AA^k_\CC$ is $X$. We do not know if $\cX \to \AA^k_\CC$ is a flat morphism, but it is ``formally flat'' over the origin in the following sense: for every $(t_1, \dots, t_k)$-primary ideal $\frakq$ of $\CC[t_1, \dots, t_k]$, the fibre product $\cX \times_{\AA^k_\CC} \Spec \CC[t_1, \dots, t_k] / \frakq$ is flat over $\Spec \CC[t_1, \dots, t_k] / \frakq$. Since the inverse limit of the rings $\CC[t_1, \dots, t_k] / \frakq$ is $\CC[ \! [t_1, \dots, t_k ] \! ]$, we say that we have a formal deformation over $\CC[ \! [t_1, \dots, t_k ] \! ]$ by using  \`a la Schlessinger terminology.

As we will see in \S\ref{sec:deformations_projective_toric}, if we had been dealing with deformations of complete varieties there would have been no need to specify the adverb ``formally'' thanks to Lemma~\ref{lemma:flatness_from_formal_to_local_proper}
\end{remark}

\begin{remark} \label{rmk:homogeneous}
Here we explain the meaning of the adjective \emph{homogeneous} in the title of this paper.
Let us assume that we are using notation from Theorem~\ref{thm:mavlyutov}. The torus $T_N = \Spec \CC[M]$ acts on the affine toric variety $X$ and consequently on $\rT^1_X = \Ext^1(\Omega_X, \cO_X)$, which is the tangent space of the deformation functor of $X$. Therefore $\rT^1_X$ is an $M$-graded vector space over the field $\CC$. Every formal deformation of $X$ over a complete local noetherian $\CC$-algebra $(A,\frakm_A)$ with residue field $\CC$ induces a $\CC$-linear map
\[
\left( \frakm_A / \frakm_A^2 \right)^\vee \to \rT^1_X
\]
which is called the Kodaira--Spencer map of the deformation (see \cite[\S6.1.2]{talpo_vistoli_deformation}). By \cite[Theorem~2.14]{mavlyutov} the image of the Kodaira--Spencer map of the deformation of $X$ constructed in Theorem~\ref{thm:mavlyutov} is contained in $\rT^1_X(w)$, which is the homogeneous component of $\rT^1_X$ with degree $w \in M$.
\end{remark}

The rest of this section is devoted to the proof of Theorem \ref{thm:mavlyutov} and relies entirely on \cite{mavlyutov}.

The following lemma is a very particular case of a result by K.~G.~Fischer and J.~Shapiro \cite{fischer_shapiro} that gives a necessary and sufficient criterion for a sequence of binomials to be a regular sequence. For every $a \in \ZZ$, define $a^+ := \max \{ a ,0 \}$ and $a^- := \max \{ -a, 0 \}$.

\begin{lemma}[Fischer--Shapiro~\cite{fischer_shapiro}] \label{lemma:fischer_shapiro}
Let $\cM = \left( a_{ij} \right)_{1 \leq i \leq k, 1 \leq j \leq n}$ be a $k \times n$ matrix with entries in $\ZZ$. For every $i = 1, \dots, k$, consider the binomial
\begin{equation*}
f_i = \prod_{j=1}^n x_j^{a_{ij}^+} - \prod_{j=1}^n x_j^{a_{ij}^-} \in \CC[x_1, \dots, x_n].
\end{equation*}
If the rank of $\cM$ is $k$ and every column of $\cM$ has at most one positive entry, then $f_1, \dots, f_k$ is a regular sequence in $\CC[x_1, \dots, x_n]$.
\end{lemma}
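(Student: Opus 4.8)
My plan is to deduce the lemma from a dimension estimate for the affine scheme cut out by the $f_i$, proved by stratifying affine space into coordinate subtori.

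\emph{Reduction.} Since $\CC[x_1, \dots, x_n]$ is Cohen--Macaulay, any sequence of $k$ elements generating a proper ideal of height $k$ is automatically a regular sequence (a standard fact; one reduces to localisations at maximal ideals and applies, e.g., \cite{matsumura}). All the $f_i$ vanish at $(1, \dots, 1)$, so $(f_1, \dots, f_k)$ is proper, and $\mathrm{ht}(f_1, \dots, f_k) \le k$ by Krull's height theorem; hence it suffices to prove $\mathrm{ht}(f_1, \dots, f_k) \ge k$, i.e.\ that every irreducible component of the affine variety $Y := V(f_1, \dots, f_k) \subseteq \AA^n_\CC$ has dimension at most $n-k$.

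\emph{Stratification.} Write $m_i^+ = \prod_j x_j^{a_{ij}^+}$ and $m_i^- = \prod_j x_j^{a_{ij}^-}$, so $f_i = m_i^+ - m_i^-$ with $m_i^+, m_i^-$ of disjoint support, and let $r_i = (a_{ij})_j$ be the $i$-th row of $\mathcal M$. For $\tau \subseteq \{1, \dots, n\}$ set $T_\tau := \{x \in \AA^n_\CC \mid x_j \ne 0 \iff j \in \tau\}$, a torus of dimension $|\tau|$; the $T_\tau$ partition $\AA^n_\CC$. Fix an irreducible component $Z$ of $Y$ and let $\tau$ be the set of coordinates not identically zero on $Z$; then $Z \cap T_\tau$ is a nonempty open, hence dense, subset of $Z$, and $Z \cap T_\tau \subseteq Y \cap T_\tau$, so $\dim Z \le \dim(Y \cap T_\tau)$. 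I compute $Y \cap T_\tau$ by setting $x_j = 0$ for $j \notin \tau$ in each $f_i$: if both $m_i^+$ and $m_i^-$ are supported on $\tau$, then $f_i|_{T_\tau} = m_i^+ - m_i^-$ cuts out the subgroup $\{x^{r_i} = 1\}$ of $T_\tau$; if exactly one of them is supported on $\tau$, then $f_i|_{T_\tau}$ is a unit on $T_\tau$ and $Y \cap T_\tau = \emptyset$; if neither is, then $f_i|_{T_\tau} \equiv 0$. Since $Z \cap T_\tau \ne \emptyset$, the middle case occurs for no $i$. Letting $I_1$ be the set of $i$ for which both $m_i^+$ and $m_i^-$ are supported on $\tau$ and $I_0$ its complement, we get $Y \cap T_\tau = \{x \in T_\tau \mid x^{r_i} = 1 \text{ for all } i \in I_1\}$; the rows $r_i$ with $i \in I_1$ are linearly independent (as $\rank \mathcal M = k$) and supported on $\tau$, so this is a subgroup of $T_\tau$ of dimension $|\tau| - |I_1|$, whence $\dim Z \le |\tau| - |I_1|$.

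\emph{The counting step --- the crux.} It remains to verify $|\tau| - |I_1| \le n - k$, equivalently $|I_0| \le n - |\tau|$. For $i \in I_0$ we are in the third case above, so in particular $m_i^+$ involves some variable $x_{j(i)}$ with $j(i) \notin \tau$, i.e.\ $a_{i, j(i)} > 0$. The assignment $i \mapsto j(i)$ is injective on $I_0$: if $j(i) = j(i')$ with $i \ne i'$, then column $j(i)$ of $\mathcal M$ would contain two positive entries, contradicting the hypothesis. Hence $|I_0| \le |\{1, \dots, n\} \setminus \tau| = n - |\tau|$, and the proof is complete.

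Each hypothesis is used exactly once: linear independence of the rows fixes the dimension of the subgroup $Y \cap T_\tau$, and the column condition supplies the injectivity in the last step; the remainder is bookkeeping. (That $m_i^+$ or $m_i^-$ may be the empty monomial $1$, when the $i$-th row has constant sign, is harmless: such an index lies in $I_1$ as soon as $Y \cap T_\tau \ne \emptyset$.) I expect the only genuine difficulty is to recognise that naive strategies fail --- no term order need make $f_1, \dots, f_k$ a Gröbner basis with monomial leading terms, and inducting by passing to $\CC[x]/(f_1)$ leaves the polynomial-ring setting --- so that the torus stratification is the right tool.
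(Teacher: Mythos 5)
Your proof is correct, and it takes a genuinely different route from the paper. The paper does not reprove this statement at all: it is quoted as a special case of the Fischer--Shapiro criterion for binomial regular sequences, the (suppressed) verification amounting to checking the hypotheses of their Corollary~2.4 via the ``derived submatrix'' of $\cM$; separately, Remark~\ref{rmk:fischer_shapiro_new_proof} notes that for the particular binomials needed in Theorem~\ref{thm:mavlyutov}(A) one can instead adapt Lemma~\ref{lemma:regular_sequence_binomials_monomial}, i.e.\ pull back an evident regular sequence along the flat inclusion of a monomial subring. You give instead a self-contained argument: Cohen--Macaulayness (unmixedness) reduces regularity of the sequence to the height bound $\mathrm{ht}(f_1,\dots,f_k)\geq k$, properness being clear since every $f_i$ vanishes at $(1,\dots,1)$, and the height bound is proved by stratifying $\AA^n_\CC$ into the coordinate tori $T_\tau$: on a stratum meeting a given component, the rows whose two monomials both survive cut out a subgroup of codimension equal to their number (this is where $\rank\cM=k$ enters), while the remaining rows inject into the coordinates outside $\tau$ via their unique positive entry (this is where the column condition enters). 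The bookkeeping, including the degenerate case $m_i^{\pm}=1$ and the passage from local closure $Z\cap T_\tau$ to $\dim Z$, is handled correctly. What each approach buys: the paper's citation is shortest and its Remark~\ref{rmk:fischer_shapiro_new_proof} alternative is purely algebraic but covers only the shape of binomials arising there (all monomials with pairwise disjoint supports and a common negative part), whereas your torus-stratification proof establishes the lemma in the stated generality without invoking \cite{fischer_shapiro}, at the modest cost of using the Cohen--Macaulay/unmixedness machinery and a geometric dimension count instead of explicit manipulations.
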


When we have a cone in a lattice $\tilde{N}$, it is possible to intersect it with a saturated sublattice $N$ of $\tilde{N}$ and get a toric morphism. The following lemma describes the scheme-theoretic image of this toric morphism under some hypotheses. 

\begin{lemma} \label{lemma:cox_equations_slicing_cone}
Let $N$ be a lattice and let $\tilde{N} = N \oplus \ZZ^k$. Denote by $e_1, \dots, e_k$ the standard basis of $\ZZ^k$. Let $\tilde{\sigma} \subseteq \tilde{N}_\RR$ be a $(\rank \tilde{N})$-dimensional strongly convex rational polyhedral cone that satisfies the following condition: the $\ZZ^k$-component of every ray of $\tilde{\sigma}$ has at most one positive entry, i.e.\
\begin{equation} \label{eq:condition_rays_sigma_tilde}
\tilde{\sigma}(1) \subseteq N \times \left(( - \NN)^k \cup \NN^+ e_1 \cup \cdots \cup \NN^+ e_k \right)
\end{equation}

If $\sigma$ is the cone $\tilde{\sigma} \cap N_\RR$ inside $N_\RR$, then the scheme-theoretic image of the toric morphism
$
\toric{\CC}{\sigma} \to \toric{\CC}{\tilde{\sigma}}
$
is the closed subscheme of $\toric{\CC}{\tilde{\sigma}}$ defined by the homogeneous ideal generated by the following binomials in the Cox coordinates of $\toric{\CC}{\tilde{\sigma}}$:
\begin{equation*}
\prod_{\substack{\xi \in \tilde{\sigma}(1) \colon \\ \langle e_i^*, \xi \rangle > 0}} x_\xi^{\langle e_i^*, \xi \rangle} - 
\prod_{\substack{\xi \in \tilde{\sigma}(1) \colon \\ \langle e_i^*, \xi \rangle < 0}} x_\xi^{-\langle e_i^*, \xi \rangle}
\end{equation*}
for $i = 1, \dots, k$. Moreover, these binomials form a regular sequence.
\end{lemma}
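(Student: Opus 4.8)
The plan is to pass to affine coordinate rings, recognise the scheme-theoretic image as the spectrum of a kernel, use Cox coordinates to rewrite the binomial ideal as the degree-zero part of a lattice-type ideal, and then reduce everything to a combinatorial statement about a monoid congruence on $\NN^{\tilde{\sigma}(1)}$.

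First I would fix notation. Since $\tilde{\sigma}$ is full-dimensional, $\toric{\CC}{\tilde{\sigma}}$ has no torus factors; let $S=\CC[x_\xi\mid\xi\in\tilde{\sigma}(1)]$ be its total coordinate ring, graded via $\iota\colon\tilde{M}\to\ZZ^{\tilde{\sigma}(1)}$, $u\mapsto(\langle u,\xi\rangle)_\xi$, which is injective because the rays of $\tilde{\sigma}$ span $\tilde{N}_\RR$. Write $d_i:=\iota(e_i^*)$, so that the $i$-th binomial in the statement is $f_i=x^{d_i^+}-x^{d_i^-}$, and set $J:=(f_1,\dots,f_k)\subseteq S$. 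By Remark~\ref{rmk:toric_varieties_general_basis}, the Cox isomorphism identifies the coordinate ring $R:=\CC[\tilde{\sigma}^\vee\cap\tilde{M}]$ of the affine variety $\toric{\CC}{\tilde{\sigma}}$ with the degree-zero subring $S_0\subseteq S$, and (by exactness of the degree-zero functor) identifies the closed subscheme cut out by $J$ with the closed subscheme of $\Spec R=\Spec S_0$ defined by the ideal $J\cap S_0$. On the other hand $g\colon\toric{\CC}{\sigma}\to\toric{\CC}{\tilde{\sigma}}$ is a morphism of affine schemes given on rings by the homomorphism $\psi\colon R\to R':=\CC[\sigma^\vee\cap M]$ induced by the projection $p\colon\tilde{M}\to M$ dual to $N\into\tilde{N}$, i.e. the map killing $e_1^*,\dots,e_k^*$ (this is well defined because $\sigma\subseteq\tilde{\sigma}$ forces $p(\tilde{\sigma}^\vee\cap\tilde{M})\subseteq\sigma^\vee\cap M$), so the scheme-theoretic image of $g$ is $\Spec(R/\ker\psi)$. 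Thus the lemma amounts to the equality of ideals $J\cap S_0=\ker\psi$ inside $R=S_0$.

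To prove this I would use the standard description $S/J\cong\CC[\NN^{\tilde{\sigma}(1)}/{\equiv}]$, where $\equiv$ is the monoid congruence on $\NN^{\tilde{\sigma}(1)}$ generated by the pairs $(d_i^+,d_i^-)$. Since the moves generating $\equiv$ preserve the $G_{\tilde{\sigma}}$-degree, taking degree-zero parts and using $\iota(\tilde{M})\cap\NN^{\tilde{\sigma}(1)}=\iota(\tilde{\sigma}^\vee\cap\tilde{M})$ gives $(S/J)_0\cong\CC[\,\iota(\tilde{\sigma}^\vee\cap\tilde{M})/{\equiv}\,]$; comparing with the image of $\psi$, which is $\CC[p(\tilde{\sigma}^\vee\cap\tilde{M})]$, the claim $J\cap S_0=\ker\psi$ reduces to the assertion that, for $u,u'\in\tilde{\sigma}^\vee\cap\tilde{M}$, one has $\iota u\equiv\iota u'$ if and only if $p(u)=p(u')$ (the resulting bijection $[\iota u]\mapsto p(u)$ is then visibly a monoid isomorphism compatible with $\psi$). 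The ``only if'' direction is immediate: each elementary move changes a vector by $\pm d_i=\pm\iota(e_i^*)\in\iota(\ker p)$, so $\iota u\equiv\iota u'$ forces $\iota u-\iota u'\in\iota(\ker p)$, hence $u-u'\in\ker p$ by injectivity of $\iota$.

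The ``if'' direction is the main obstacle, and it is exactly here that hypothesis \eqref{eq:condition_rays_sigma_tilde} enters. That hypothesis says that a ray $\xi$ with $\langle e_i^*,\xi\rangle>0$ has $\langle e_j^*,\xi\rangle=0$ for all $j\neq i$; consequently the positive supports $\mathrm{supp}(d_i^+)=\{\xi\mid\langle e_i^*,\xi\rangle>0\}$ are pairwise disjoint, and each $\mathrm{supp}(d_i^+)$ is disjoint from $\mathrm{supp}(d_j^-)$ for every $j\neq i$ and from $\mathrm{supp}(d_i^-)$. Given $u,u'\in\tilde{\sigma}^\vee\cap\tilde{M}$ with $u-u'=\sum_i c_i e_i^*$, I would first reduce to the case $c_i\geq 0$ for all $i$ by passing through the character $v:=u-\sum_{c_i\geq 0}c_i e_i^*=u'-\sum_{c_i<0}|c_i|\,e_i^*$: pairing $v$ against each ray and separating the two cases permitted by \eqref{eq:condition_rays_sigma_tilde} shows $\langle v,\xi\rangle\geq 0$ for every $\xi$, so $v\in\tilde{\sigma}^\vee\cap\tilde{M}$, while $u-v$ and $u'-v$ are non-negative integer combinations of the $e_i^*$. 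In the case $u-u'=\sum_i m_i e_i^*$ with $m_i\in\NN$, one builds the required chain of moves by subtracting $d_i^+-d_i^-$ exactly $m_i$ times, for each $i$: disjointness of the positive supports guarantees that the blocks for different indices affect different coordinates in a way that does not interfere, that the relevant coordinates never become negative --- on $\mathrm{supp}(d_i^+)$ the starting coordinate $\langle u,\xi\rangle$ equals $\langle u',\xi\rangle+m_i\langle e_i^*,\xi\rangle\geq m_i\langle e_i^*,\xi\rangle$ and is untouched by the other blocks --- and a direct bookkeeping shows the terminal vector is $\iota u'$. This gives $\iota u\equiv\iota u'$ and hence identifies the closed subscheme cut out by $J$ with the scheme-theoretic image of $g$.

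For the final assertion I would apply Lemma~\ref{lemma:fischer_shapiro} to the $k\times\#\tilde{\sigma}(1)$ integer matrix $\cM$ whose $i$-th row is $d_i$: hypothesis \eqref{eq:condition_rays_sigma_tilde} says exactly that every column of $\cM$ has at most one positive entry, and $\rank\cM=k$ because $\iota$ is injective while $e_1^*,\dots,e_k^*$ are linearly independent in $\tilde{M}$. Hence $f_1,\dots,f_k$ is a regular sequence in $S$.
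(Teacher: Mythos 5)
Your proposal is correct, and its skeleton matches the paper's proof: both reduce the statement to an equality of ideals in the degree-zero part of the Cox ring $S$, both construct the same intermediate character (your $v$, the paper's $q$) and verify it lies in $\tilde{\sigma}^\vee$ by the case analysis on rays permitted by \eqref{eq:condition_rays_sigma_tilde}, and both obtain the regular-sequence claim from Lemma~\ref{lemma:fischer_shapiro} after checking the same two conditions on the matrix $\left( \langle e_i^*, \xi \rangle \right)_{i, \xi}$. The genuine difference is how the equality $J \cap S_0 = \mathrm{Cox}(\ker\psi)$ is organised. The paper works directly with monomials: for one containment it shows that suitable products of the $y_i$'s divide $\mathrm{Cox}(\chi^r)$ and exhibits the relevant binomials as monomial multiples of the generators, and for the other it decomposes an element of $J \cap S_0$ into $G_{\tilde{\sigma}}$-homogeneous pieces, reduces to monomial multiples $p(y_i - z_i)$, and uses full-dimensionality of $\tilde{\sigma}$ to deduce $r - s = e_i^*$. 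You instead invoke the presentation $S/J \cong \CC[\NN^{\tilde{\sigma}(1)}/{\equiv}]$ of a pure-difference binomial quotient and reduce everything to the combinatorial assertion that, on $\tilde{\sigma}^\vee \cap \tilde{M}$, the congruence coincides with equality of images under $\tilde{M} \to M$; this makes one containment immediate from injectivity of $\iota$ (your ``only if'' direction) and replaces the paper's divisibility bookkeeping by an explicit chain of congruence moves, whose validity again rests on the disjointness of the positive supports coming from \eqref{eq:condition_rays_sigma_tilde}. The trade-off is that your route isolates the combinatorics more cleanly, but it leans on the unproved (though standard) fact that the ideal generated by the $x^{d_i^+} - x^{d_i^-}$ is spanned by all differences $x^a - x^b$ with $a \equiv b$; you should either cite this (it is in the binomial-ideals literature, e.g. the reference \cite{miller_sturmfels} already used in the paper) or include its short telescoping proof. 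With that reference supplied, your argument is complete and proves both the scheme-theoretic image description and the regular-sequence statement.
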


\begin{proof}
The toric morphism $
\toric{\CC}{\sigma} \to \toric{\CC}{\tilde{\sigma}}
$ is associated to the ring homomorphism
\begin{equation} \label{eq:ring_homomorphism_sigma_tilde_sigma}
\CC[\tilde{\sigma}^\vee \cap \tilde{M}] \to \CC[\sigma^\vee \cap M]
\end{equation}
that maps $\chi^{\tilde{u}}$ to $\chi^{\phi(\tilde{u})}$, where $\phi \colon \tilde{\sigma}^\vee \cap \tilde{M} \to \sigma^\vee \cap M$ is the semigroup homomorphism given by $u + a_1 e_1^* + \cdots + a_k e_k^* \mapsto u$. Let $I \subseteq \CC[\tilde{\sigma}^\vee \cap \tilde{M}]$ be the kernel of \eqref{eq:ring_homomorphism_sigma_tilde_sigma}. The scheme-theoretic image of $
\toric{\CC}{\sigma} \to \toric{\CC}{\tilde{\sigma}}
$ is the closed subscheme  of $\toric{\CC}{\tilde{\sigma}}$ defined by the ideal $I$.

We consider the Cox ring of $\toric{\CC}{\tilde{\sigma}}$: $S = \CC[x_\xi \mid \xi \in \tilde{\sigma}(1)]$, with its $G_{\tilde{\sigma}}$-grading. Consider the following monomials in Cox coordinates:
\begin{align*}
y_i = \prod_{\substack{\xi \in \tilde{\sigma}(1) \colon \\ \langle e_i^*, \xi \rangle > 0}} x_\xi^{\langle e_i^*, \xi \rangle}
\qquad \text{and} \qquad
z_i = \prod_{\substack{\xi \in \tilde{\sigma}(1) \colon \\ \langle e_i^*, \xi \rangle < 0}} x_\xi^{-\langle e_i^*, \xi \rangle},
\end{align*}
for $i = 1, \dots, k$. Let $J \subseteq S$ be the ideal generated by $y_1 - z_1, \dots, y_k - z_k$. It is obviously homogeneous.
In order to prove the thesis, we need to show that, 
under the Cox isomorphism between $\CC[\tilde{\sigma}^\vee \cap M]$ and $S_0$,
 the ideal $I$ equals the degree zero part of the ideal $J$, i.e.\
\begin{equation}\label{eq:equality_ideals_cox_isomorphism}
\mathrm{Cox}(I) = J \cap S_0.
\end{equation}

\medskip

We now prove the containment $\subseteq$ in \eqref{eq:equality_ideals_cox_isomorphism}.
Since $I$ is the kernel of \eqref{eq:ring_homomorphism_sigma_tilde_sigma}, it is not difficult to show that $I$ is generated by the elements $\chi^r - \chi^s$ whenever $r, s \in \tilde{\sigma}^\vee \cap \tilde{M}$ are such that $\phi(r) = \phi(s)$. So $r - s = \sum_{i=1}^k a_i e_i^*$, for some $a_i \in \ZZ$. Now, for each $i = 1, \dots, k$, consider $a_i^+ \in \NN$ and $a_i^- \in \NN$: we have $a_i^+ a_i^- = 0$ and $a_i = a_i^+ - a_i^-$. Consider the element
\begin{equation*}
q = r - \sum_{i=1}^k a_i^+ e_i^* = s - \sum_{i=1}^k a_i^- e_i^* \in \tilde{M}.
\end{equation*}
Let us show that $q \in \tilde{\sigma}^\vee$. We need to show that $q$ is non-negative on the rays of $\tilde{\sigma}$. By \eqref{eq:condition_rays_sigma_tilde}, we distinguish two cases:
\begin{itemize}
\item $v = n - b_1 e_1 - \cdots - b_k e_k \in \tilde{\sigma}(1)$, for some $n \in N$ and $b_1, \dots, b_k \in \NN$; then $\langle q, v \rangle = \langle r, v \rangle +  \sum_{i=1}^k a_i^+ b_i \geq \langle r, v \rangle \geq 0$.
\item $v = n + b e_i \in \tilde{\sigma}(1)$, for some $n \in N$, $1 \leq i \leq k$ and $b \in \NN^+$; then  $\langle q, v \rangle = \langle r , v \rangle - a_i^+ b = \langle s, v \rangle - a_i^- b$. Since either $a_i^+ = 0$ or $a_i^- = 0$, we have either $\langle q, v \rangle = \langle r, v \rangle \geq 0$ or $\langle q, v \rangle = \langle s, v \rangle \geq 0$.
\end{itemize}
Therefore $\chi^{q} \in \CC[\tilde{\sigma}^\vee \cap \tilde{M}]$.

For each $\xi \in \tilde{\sigma}(1)$ we have
\begin{equation*}
\langle r, \xi \rangle - \sum_{i : \langle e_i^*, \xi \rangle < 0} a_i^+ \langle e_i^*, \xi \rangle = 
\langle q, \xi \rangle + \sum_{i : \langle e_i^*, \xi \rangle > 0} a_i^+ \langle e_i^*, \xi \rangle.
\end{equation*}
Therefore, in the ring $S$ we have the equality
\begin{equation} \label{eq:cox(chi_r)}
\mathrm{Cox}(\chi^r) \cdot \prod_{i=1}^k z_i^{a_i^+} = \mathrm{Cox}(\chi^q) \cdot \prod_{i=1}^k y_i^{a_i^+}.
\end{equation}
By \eqref{eq:condition_rays_sigma_tilde} every Cox variable appearing in $y_1 \cdots y_k$ does not appear in $z_1 \cdots z_k$.
Therefore the monomials $\prod_{i=1}^k y_i^{a_i^+}$ and $\prod_{i=1}^k z_i^{a_i^+}$ are coprime.
From \eqref{eq:cox(chi_r)} we obtain that the monomial $\prod_{i=1}^k y_i^{a_i^+}$ divides $\mathrm{Cox}(\chi^r)$. Therefore there exists a monomial $p \in S$ such that
\begin{align*}
\mathrm{Cox}(\chi^r) = p \cdot \prod_{i=1}^k y_i^{a_i^+} \qquad \text{and} \qquad
\mathrm{Cox}(\chi^q) = p \cdot \prod_{i=1}^k z_i^{a_i^+};
\end{align*}
thus the binomial
\begin{equation*}
\mathrm{Cox}(\chi^r - \chi^q) = p \cdot \left( \prod_{i=1}^k  y_i^{a_i^+} - \prod_{i=1}^k z_i^{a_i^+} \right)
\end{equation*}
is clearly in the ideal $J$. In a completely analogous way we prove that $\mathrm{Cox}(\chi^s - \chi^q)$ is in $J$. Therefore, by taking the difference, we have that $\mathrm{Cox}(\chi^r - \chi^s)$ is in $J$.

\medskip

We now prove the containment $\supseteq$ in \eqref{eq:equality_ideals_cox_isomorphism}. Let $f \in J \cap S_0$. We may write
\begin{equation*}
f = \sum_{i=1}^k f_i (y_i - z_i)
\end{equation*}
for some $f_i \in S$. Let $\beta_i \in G_{\tilde{\sigma}}$ be the degree of $y_i - z_i$. By taking the homogeneous components with respect to the $G_{\tilde{\sigma}}$-grading, we may assume that $f_i$ is homogeneous of degree $- \beta_i$. By decomposing $f_i$ into the sum of its monomials, in order to show the containment $\supseteq$ in \eqref{eq:equality_ideals_cox_isomorphism}, it is enough to show that $p (y_i - z_i) \in \mathrm{Cox}(I)$, whenever $i \in \{ 1, \dots, k \}$ and $p \in S$ is a monomial of degree $- \beta_i$.

Since $p y_i$ and $p z_i$ are monomials of degree $0$ in $S$, there exist $r, s \in \tilde{\sigma}^\vee \cap \tilde{M}$ such that $ p y_i = \mathrm{Cox}(\chi^r)$ and $ p z_i = \mathrm{Cox}(\chi^s)$. Since $p (y_i - z_i) = \mathrm{Cox}(\chi^r - \chi^s)$, we must show that $\phi(r) = \phi(s)$. Let us assume that
\[
p = \prod_{\xi \in \tilde{\sigma}(1)} x_\xi^{b_\xi}.
\]
For each $\xi \in \tilde{\sigma}(1)$, we have that
\begin{align*}
b_\xi + \langle e_i^*, \xi \rangle^+ &= \langle r, \xi \rangle \\
b_\xi + \langle e_i^*, \xi \rangle^- &= \langle s, \xi \rangle,
\end{align*}
therefore $\langle e_i^*, \xi \rangle = \langle r-s, \xi \rangle$. Since $\tilde{\sigma}$ is full dimensional, we have $r-s = e_i^*$; this proves that $\phi(r) = \phi(s)$ and $\chi^r - \chi^s \in I$.

\medskip

Now we prove that $y_1 - z_1, \dots, y_k - z_k$ is a regular sequence. By Lemma \ref{lemma:fischer_shapiro} it is enough to show that the matrix $\cM = \left( \langle e_i^*, \xi \rangle \right)_{1 \leq i \leq k, \ \xi \in \tilde{\sigma}(1)}$ has rank $k$ and every column of $\cM$ has at most one positive entry. The latter condition is satisfied by \eqref{eq:condition_rays_sigma_tilde}.

The linear map associated to the matrix $\cM$ is the composite of the ray map $\rho \colon \ZZ^{\vert \tilde{\sigma}(1) \vert} \to \tilde{N} = N \oplus \ZZ^k$ of $\toric{\CC}{\tilde{\sigma}}$ and the projection $\pi \colon \tilde{N} = N \oplus \ZZ^k \to \ZZ^k$. Since $\tilde{\sigma}$ is full-dimensional, $\rho \otimes_\ZZ \mathrm{id}_\RR$ is surjective. This implies that $(\pi \circ \rho) \otimes_\ZZ \mathrm{id}_\RR$ is surjective and that $\cM$ has rank $k$.
\end{proof}

\begin{proof}[Proof of Lemma~\ref{lemma:sigma_tilde_is_full_dimensional}] \label{proof:lemma_sigma_tilde_full_dimensional}
By (iii) and (i) in Definition~\ref{def:deformation_datum} we see that $\rec(Q_i) \subseteq \rec(Q) \subseteq \sigma$ for every $i = 0, 1, \dots, k$. In particular, $\rec(Q_i)$ is strongly convex; so $Q_i = \conv{\vertices(Q_i)} + \rec(Q_i)$. We have that
\begin{equation*}
\tilde{\sigma} = \cone{ \sigma, \vertices(Q_0) - e_1 - \cdots - e_k, \vertices(Q_1) + e_1, \dots, \vertices(Q_k) + e_k }.
\end{equation*}
This implies that the cone $\tilde{\sigma}$ is a rational convex polyhedral cone in $\tilde{N}$. Moreover, the rays of $\tilde{\sigma}$ are among the following rays:
\begin{itemize}
\item rays passing through the vertices of $Q_0 - e_1 - \cdots -e_k$;
\item rays passing through the vertices of $Q_i + e_i$, as $ i = 1, \dots, k$;
\item rays of $\sigma$ that are not in the cone generated by the previous rays.
\end{itemize}

Now we prove that $\sigma = \tilde{\sigma} \cap N_\RR$. The containment $\subseteq$ is obvious. We need to show the containment $\supseteq$. Let $\tilde{v} \in \tilde{\sigma} \cap N_\RR$. By the convexity of $Q_0, Q_1, \dots, Q_k$, which implies that $\cone{Q_i + e_i} = \RR_{\geq 0} (Q_i + e_i)$ and an analogous statement for $Q_0$, we may assume that
\begin{align*}
\tilde{v} &= v + \lambda_0 (q_0 - e_1 - \cdots - e_k) + \lambda_1 (q_1 + e_1) + \cdots + \lambda_k (q_k + e_k) \\
&= v + \lambda_0 q_0 + \lambda_1 q_1 + \cdots + \lambda_k q_k + (\lambda_1 - \lambda_0) e_1 + \cdots + (\lambda_k - \lambda_0) e_k
\end{align*}
for some $v \in \sigma$, $q_i \in Q_i$ and $\lambda_i \geq 0$. Since $\tilde{v} \in N_\RR$, $\lambda_0 = \lambda_i$ for every $i$. Therefore $\tilde{v} = v + \lambda_0 (q_0 + q_1 + \cdots + q_k)$. By (iii) and (i), $q_0 + q_1 + \cdots + q_k \in Q \subseteq \sigma$ and we conclude that $\tilde{v} \in \sigma$.

Now we show that $\tilde{\sigma}$ is strongly convex.
Since $\sigma$ is strongly convex and $0 \notin Q$ by (ii), we may find $u \in \interior(\sigma^\vee)$ such that $\min_Q u > 0$.
Since the recession cones of $Q, Q_0, Q_1, \dots, Q_k$ are contained in $\sigma$, the minimum of $u$ on each of these polyhedra exists. Consider
\begin{equation*}
\tilde{u} = u - \sum_{i=1}^k \min_{Q_i} u   \ e_i^* + \frac{1}{k+1} \min_Q u \ \sum_{i=1}^k e_i^* \in \tilde{M}_\RR
\end{equation*}
In order to show that $\tilde{\sigma}$ is strictly convex, we prove that $\tilde{u}$ is positive on the rays of $\tilde{\sigma}$. We may distinguish three cases as follows:
\begin{itemize}
\item the ray passes through $v - e_1 - \cdots - e_k$, for some $v \in \vertices(Q_0)$; then
\begin{align*}
\langle \tilde{u}, v - e_1 - \cdots - e_k \rangle &= \langle u, v \rangle + \sum_{i=1}^k \min_{Q_i} u - \frac{k}{k+1} \min_Q u \\
&\geq \min_{Q_0} u + \min_{Q_1 + \cdots + Q_k} u - \frac{k}{k+1} \min_Q u \\
&= \min_Q u - \frac{k}{k+1} \min_Q u \\
&= \frac{1}{k+1} \min_Q u > 0;
\end{align*} 
\item the ray passes through $v + e_i$, for some $v \in \vertices(Q_i)$ and $1 \leq i \leq k$; then
\begin{equation*}
\langle \tilde{u}, v + e_i \rangle = \langle u, v \rangle - \min_{Q_i} u + \frac{1}{k+1} \min_Q u \geq \frac{1}{k+1} \min_Q u > 0.
\end{equation*}
\item the ray is a ray of $\sigma$ through $v \in N \setminus \{ 0 \}$; then $\langle \tilde{u}, v \rangle = \langle u, v \rangle > 0$, because $u \in \interior(\sigma^\vee)$;
\end{itemize}
This concludes the proof of the strong convexity of $\tilde{\sigma}$.

We now show that $\tilde{\sigma}$ has dimension $\rank \tilde{N}$. Equivalently we see that zero is the unique linear functional on $\tilde{N}$ that vanishes over $\tilde{\sigma}$. Let $\tilde{u} = u + \sum_{i=1}^k a_i e_i^* \in \tilde{M}$ be such that it vanishes over $\tilde{\sigma}$. In particular it vanishes over $\sigma$, hence $u = 0$ because $\sigma$ is full-dimensional. By evaluating $\tilde{u}$ on $Q_i + e_i$ we see that $a_i$ must be zero. This implies that $\tilde{u} = 0$.

This concludes the proof of Lemma~\ref{lemma:sigma_tilde_is_full_dimensional}.
\end{proof}

\begin{proof}[Proof of Theorem \ref{thm:mavlyutov}(A)]
By Lemma~\ref{lemma:sigma_tilde_is_full_dimensional} and Lemma~\ref{lemma:cox_equations_slicing_cone} it is enough to show that the toric morphism $\toric{\CC}{\sigma} \to \toric{\CC}{\tilde{\sigma}}$ is a closed embedding. 

Before proving this, we shall prove the following claim:
\begin{equation} \label{eq:equality_minima_floor}
\forall u \in \sigma^\vee \cap M, \qquad \sum_{i=0}^k \left\lfloor \min_{Q_i} u \right\rfloor = \left\lfloor \min_Q u \right\rfloor.
\end{equation}
Firstly we show that the minimum of $u$ on $Q$ is attained on a vertex of $Q$; this comes from the strong convexity of $\sigma$ as follows.
By (i) $\rec(Q)$ is contained in $\sigma$ and so is a strongly convex cone. We have
\begin{equation} \label{eq:Q_vertQ_recQ}
Q = \conv{\vertices(Q)} + \rec(Q).
\end{equation}
Since $u \in \sigma^\vee$, $u$ is non-negative on $\rec(Q)$. Therefore there exists a vertex $v$ of $Q$ such that $\min_Q u = \langle u, v \rangle$. Now we prove the claim \eqref{eq:equality_minima_floor}. By (iv') we may find vertices $v_i \in \vertices(Q_i)$, $i=0,1,\dots, k$, such that $v = v_0 + v_1 + \cdots + v_k$ and they are all integral with at most one exception. This implies that the numbers $\langle u, v_0 \rangle$, $\langle u, v_1 \rangle$, \dots, $\langle u, v_k \rangle$ are all integral with at most one exception. Therefore
\begin{equation*}
\sum_{i=0}^k \left\lfloor \langle u , v_i \rangle \right\rfloor = \left\lfloor \langle u, v \rangle \right\rfloor.
\end{equation*}
But $\min_Q u = \langle u, v \rangle$ and it is clear that $\min_{Q_i} u = \langle u, v_i \rangle$ for $i = 0,1, \dots, k$. Therefore we have proved \eqref{eq:equality_minima_floor}.

Now we prove that the toric morphism $\toric{\CC}{\sigma} \to \toric{\CC}{\tilde{\sigma}}$ is a closed embedding. Equivalently, we have to show that the semigroup homomorphism
\[
\phi \colon \tilde{\sigma}^\vee \cap \tilde{M} \to \sigma^\vee \cap M
\]
is surjective. Let $u \in \sigma^\vee \cap M$ and consider
\begin{equation*}
\tilde{u} = u - \sum_{i=1}^k \left\lfloor \min_{Q_i} u \right\rfloor e_i^* \in \tilde{M};
\end{equation*}
if we prove that $\tilde{u} \in \tilde{\sigma}^\vee$ we have finished because the equality $\phi(\tilde{u}) = u$ obviously holds true. It is clear that $\tilde{u}$ is non-negative on $\sigma$ and it is very easy to show that $\tilde{u}$ is non-negative on $Q_i +e_i$, for each $i=1, \dots, k$. So it remains to show that $\tilde{u}$ is non-negative on $Q_0 - e_1 - \dots - e_k$. If $q \in Q_0$, then
\begin{align*}
\langle \tilde{u}, q - e_1 - \cdots - e_k \rangle = \langle u, q \rangle + \sum_{i=1}^k \left\lfloor \min_{Q_i} u \right\rfloor \geq \\
\geq \left\lfloor \min_{Q_0} u \right\rfloor + \sum_{i=1}^k \left\lfloor \min_{Q_i} u \right\rfloor
= \left\lfloor \min_{Q} u \right\rfloor \geq 0,
\end{align*}
where the last equality is \eqref{eq:equality_minima_floor} and the last inequality holds because of (i).

This concludes the proof of Theorem \ref{thm:mavlyutov}(A).
\end{proof}

\begin{lemma} \label{lemma:complete_intersection_flatness_neighbourhood}
Let $(A, \frakm, \kappa)$ be an artinian local ring and $B$ be a flat $A$-algebra of finite type. Let $b_1, \dots, b_k \in B$ generate the ideal $J$ of $B$. 
If $b_1, \dots, b_k$ is a $(B \otimes_A \kappa)$-regular sequence, then $B/J$ is flat over $A$.
\end{lemma}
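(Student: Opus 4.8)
The plan is to prove this by induction on the length of the artinian local ring $A$, using the local criterion for flatness together with the hypothesis that $b_1, \dots, b_k$ forms a regular sequence on the special fibre $\bar{B} := B \otimes_A \kappa$. The base case $A = \kappa$ is trivial, since then $B/J$ is automatically flat over the field $\kappa$. For the inductive step, since $A$ is artinian local, we may choose a nonzero element $a \in A$ annihilated by $\frakm$, so that $\kappa \simeq aA \subseteq A$ and we have a short exact sequence $0 \to \kappa \to A \to A' \to 0$ of $A$-modules, where $A' = A/aA$ has strictly smaller length. By induction, $B'/J' := (B/J) \otimes_A A'$ is flat over $A'$, where $B' = B \otimes_A A'$ and $J' = JB'$; note $B'$ is flat over $A'$ and $b_1, \dots, b_k$ still generate $J'$ and still form a regular sequence on $B' \otimes_{A'} \kappa = \bar{B}$.

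The key step is then to run the local criterion for flatness in the form: $B/J$ is flat over $A$ if and only if $B'/J'$ is flat over $A'$ and the natural map $(B/J) \otimes_A \kappa \to $ (appropriate Tor-vanishing) holds; concretely, using the sequence $0 \to \kappa \to A \to A' \to 0$, flatness of $B/J$ over $A$ follows once we know $\mathrm{Tor}_1^A(B/J, \kappa) = 0$, i.e. that tensoring $0 \to aA \to A \to A' \to 0$ with $B/J$ stays exact on the left, equivalently that multiplication by $a$ on $B/J$ has kernel exactly $\frakm(B/J)$ — or, more cleanly, that $\bar{B}/\bar{J} \xrightarrow{\ \cdot a\ } a(B/J)$ is injective, where $\bar{J}$ is the image of $J$ in $\bar{B}$. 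Since $B$ is flat over $A$, multiplication by $a$ identifies $\bar{B} = B \otimes_A \kappa$ with $aB \subseteq B$. The crux is to show that under this identification $a(B/J) \simeq \bar{B}/\bar{J}$, i.e. that $aB \cap J = aJ$. This is where the regular sequence hypothesis enters: because $b_1, \dots, b_k$ is a regular sequence on $\bar{B}$, the Koszul complex $K_\bullet(b_1, \dots, b_k; B)$ has homology concentrated in degree $0$ after applying $\otimes_A \kappa$, and since $B$ is $A$-flat one can use the rigidity of Koszul homology (or simply induct on $k$, splitting off one $b_i$ at a time and using that $b_i$ is a nonzerodivisor on $B/(b_1, \dots, b_{i-1})$ modulo $\frakm$) to deduce that $K_\bullet(b_1, \dots, b_k; B)$ is a resolution of $B/J$, hence $\mathrm{Tor}_i^?(\dots)$ controlling the intersection $aB \cap J$ vanishes.

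The main obstacle I expect is precisely the verification that a regular sequence on the special fibre lifts to a ``$B$-quasiregular'' sequence whose Koszul complex resolves $B/J$ over $A$, i.e. controlling how the regular-sequence property spreads out from the closed fibre to the whole artinian family; the clean way around this is to prove, by a separate induction on $k$, that if $b_1, \dots, b_k$ generate $J$ in an $A$-flat algebra $B$ and form a regular sequence on $B \otimes_A \kappa$, then for each $i$ the element $b_i$ is a nonzerodivisor on $(B/(b_1, \dots, b_{i-1}))$ — which itself follows from the length-induction on $A$ applied to the algebra $B/(b_1, \dots, b_{i-1})$ — and simultaneously that $B/(b_1, \dots, b_i)$ is $A$-flat. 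This bootstraps the statement: once $B/(b_1,\dots,b_{k-1})$ is known $A$-flat with $b_k$ a nonzerodivisor on its special fibre, the case $k = 1$ (which is just the standard fact that a relative nonzerodivisor modulo nilpotents cuts out a flat quotient, provable directly with the snake lemma on $0 \to aA \to A \to A' \to 0$) finishes the argument. I would present the $k=1$ case carefully and then phrase the reduction to it as the induction on $k$.
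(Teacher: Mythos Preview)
Your proof outline is correct, but it takes a markedly different route from the paper. The paper's proof is two lines: for each prime $P$ of $B$ one has $P \cap A = \frakm$ (since $A$ is artinian local), so $A \to B_P$ is a local homomorphism of Noetherian local rings; if $J \subseteq P$ then the images of $b_1,\dots,b_k$ lie in the maximal ideal of $B_P$ and remain a regular sequence on $(B_P)\otimes_A\kappa$, whence \cite[Corollary to Theorem~22.5]{matsumura} gives that $(B/J)_P$ is $A$-flat directly.

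What you are doing is essentially reproving that corollary of Matsumura from scratch in this artinian setting, via a double induction on the length of $A$ and on $k$. This is perfectly valid and more self-contained: the $k=1$ step (a fibrewise nonzerodivisor on an $A$-flat module over an artinian local base is a genuine nonzerodivisor, and the quotient stays flat) follows cleanly from the $\frakm$-adic filtration and the local criterion $\mathrm{Tor}_1^A(-,\kappa)=0$, and the induction on $k$ then reduces everything to $k=1$. The trade-off is length versus black-boxing: the paper buys brevity by citing a standard reference, while your argument would make the note independent of that particular citation. If you present it, I would streamline by dropping the Koszul-complex discussion in the middle paragraph (it is not needed once you commit to the induction on $k$) and going straight to the clean two-step induction you describe at the end.
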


\begin{proof}
Let $P$ be a prime ideal of $B$. Since $\frakm$ is the unique prime ideal of $A$, we have $\frakm = P \cap A$ and $A \to B_P$ is a local homomorphism. We need to show that $(B/J)_P = B_P / J B_P$ is flat over $A$. If $J \nsubseteq P$, then $(B/J)_P = 0$ and we are done. If $J \subseteq P$, then we conclude by \cite[Corollary to Theorem 22.5]{matsumura}.
\end{proof}

\begin{proof}[{Proof of Theorem~\ref{thm:mavlyutov}(B)}]
From \eqref{eq:Q_vertQ_recQ} and (v), we have that $w$ is non-negative on $\rec(Q)$ and $\min_Q w = \langle w, v \rangle$ for some vertex $v$ of $Q$. By (iv') we may find vertices $v_i \in \vertices(Q_i)$, $i=0,1,\dots, k$, such that $v = v_0 + v_1 + \cdots + v_k$ and they are all integral with at most one exception. This implies that the numbers $\langle w, v_0 \rangle$, $\langle w, v_1 \rangle$, \dots, $\langle w, v_k \rangle$ are all integral with at most one exception. Therefore
\begin{equation*}
\sum_{i=0}^k \left\lfloor \langle w , v_i \rangle \right\rfloor = \left\lfloor \langle w, v \rangle \right\rfloor.
\end{equation*}
But $\min_Q w = \langle w, v \rangle$ and it is clear that $\min_{Q_i} w = \langle w, v_i \rangle$ for $i = 0,1, \dots, k$. Therefore we have proved the equality
\begin{equation} \label{eq:equality_minima_floor_h}
\sum_{i=0}^k \left\lfloor \min_{Q_i} w \right\rfloor = \left\lfloor \min_Q w \right\rfloor.
\end{equation}

Now we show that the trinomials \eqref{eq:trinomials} are elements of the polynomial ring
\[
\CC[t_1, \dots, t_k][x_\xi \mid \xi \in \tilde{\sigma}(1)],
\]
 which is the homogeneous coordinate ring of $\toric{\CC[t_1, \dots, t_k]}{\tilde{\sigma}}$. It is enough to show that every Cox coordinate appearing in the third monomial in \eqref{eq:trinomials} has a non-negative exponent. Fix a ray $\xi$ of $\tilde{\sigma}$. We may distinguish three cases as follows.
\begin{itemize}

\item $\xi$ passes through a vertex of $Q_0 - e_1 - \dots - e_k$. Then $\xi = \lambda (v_0 - e_1 - \cdots - e_k)$, for some $\lambda \in \NN^+$ and $v_0 \in \vertices(Q_0)$. Then
\begin{gather*}
\langle \tilde{w}, \xi \rangle = \lambda \langle w, v_0 \rangle + \lambda \sum_{i=1}^k \left\lfloor \min_{Q_i} w \right\rfloor 
\geq \lambda \sum_{i=0}^k \left\lfloor \min_{Q_i} w \right\rfloor 
= \lambda \left\lfloor \min_{Q} w \right\rfloor 
\geq - \lambda,
\end{gather*}
where the last equality holds by \eqref{eq:equality_minima_floor_h} and the last inequality holds by (v). Therefore the exponent of $x_\xi$ in the third trinomial in \eqref{eq:trinomials}, which is $\langle \tilde{w}, \xi \rangle + \lambda$, is non-negative.
\item $\xi$ passes through a vertex of $Q_i + e_i$, for some $1 \leq i \leq k$. Then $\xi = \lambda (v + e_i)$, for some $\lambda \in \NN^+$ and $v \in \vertices(Q_i)$. Then $\langle \tilde{w}, \xi \rangle = \lambda \langle w, v \rangle - \lambda \left\lfloor \min_{Q_i} w \right\rfloor \geq 0$.
\item $\xi$ is a ray of $\sigma$ too. We need to show that $\langle \tilde{w}, \xi \rangle = \langle w, \xi \rangle$ is non-negative. For a contradiction assume that $\langle w, \xi \rangle < 0$. Therefore a positive multiple of $\xi$ lies in the polyhedron $P := \sigma \cap \{ n \in N_\RR \mid \langle w, n \rangle = -1 \}$. Since $\rec(P)$ is strongly convex, $P = \conv{\vertices(P)} + \rec(P)$. By (vi) we obtain that $\xi = \lambda q + r$, for some $\lambda >0$, $q \in Q$, $r \in \rec(P)$. Since $\lambda q$ and $r$ are both in $\sigma$ and $\xi$ is a ray of $\sigma$, we have that
$\xi = \mu q$ for some $\mu \geq 0$.
 From (iii) we have that $\xi$ is in $\cone{Q_0-e_1-\cdots-e_k, Q_1+e_1, \dots, Q_k+e_k}$. This contradicts the fact that $\xi$ is a ray of both $\sigma$ and $\tilde{\sigma}$.
\end{itemize}

Now the closed subscheme $\cX$ is well defined.
We need to show that the restriction of $\cX \to \AA^k_\CC$ to any infinitesimal neighbourhood of $O \in \AA^k_\CC$ is flat.

Fix a $(t_1, \dots, t_k)$-primary ideal $\frakq$. Consider the local artinian $\CC$-algebra $A = \CC[t_1, \dots, t_k] / \frakq$. We need to show that $\cX \times_{\AA^k_\CC} \Spec A \to \Spec A$ is flat. The homogeneous coordinate ring of $\toric{A}{\tilde{\sigma}}$ is the polynomial $A$-algebra $B = A[ x_\xi \mid \xi \in \tilde{\sigma}(1) ]$. By (A) the trinomials \eqref{eq:trinomials} form a $(B \otimes_A \CC)$-regular sequence. By Lemma~\ref{lemma:complete_intersection_flatness_neighbourhood} 
the homogeneous ideal $J \subseteq B$ generated by the trinomials \eqref{eq:trinomials} is such that $B/J$ is flat over $A$. By Lemma~\ref{lemma:flatness_sheafification_graded_module} the sheafification of the $G_{\tilde{\sigma}}$-graded $B$-module $B/J$ is a coherent sheaf on $\toric{A}{\tilde{\sigma}}$ which is flat over $\Spec A$. This sheaf is the structure sheaf of the closed subscheme $\cX \times_{\AA^k_\CC} \Spec A$ of $\toric{A}{\tilde{\sigma}}$. Therefore we have proved that $\cX \times_{\AA^k_\CC} \Spec A$ is flat over $\Spec A$.

This concludes the proof of Theorem~\ref{thm:mavlyutov}(B).
\end{proof}

\section{Deformations of affine toric pairs} \label{sec:deformations of toric affine pairs}

If in Theorem~\ref{thm:mavlyutov} we assume that $(Q,Q_0,Q_1, \dots,Q_k,w)$ is a $\partial$-deformation datum, then Mavlyutov's construction of deformations of affine toric varieties, which appears in \cite{mavlyutov} and is rewritten in \S\ref{sec:deformations_affine_mavlyutov}, actually gives deformations of their toric boundary too. 

More precisely, in the setting of Theorem~\ref{thm:mavlyutov} with the additional hypothesis (iv), we construct a reduced divisor $\cD$ in the toric variety $\tilde{X} = \toric{\CC}{\tilde{\sigma}}$ such that $\cD \cap X$ is the toric boundary $\partial X$ of $X$.  Theorem~\ref{thm:mavlyutov} constructs a formal deformation $\cX \to \AA^k_\CC$ of $X$ as a closed subscheme in the trivial family $\tilde{X} \times_{ \CC} \AA^k_\CC$; then one can see that the closed subscheme $\cX \cap (\cD \times_{\CC} \AA^k_\CC)$ gives a deformation of $\partial X$. In other words, $\cX \cap (\cD \times_\CC \AA^k_\CC) \into \cX \to \AA^k_\CC$ induces a formal deformation of the toric pair $(X,  \partial X)$. This is the content of the following theorem, which is the precise formulation of Theorem~\ref{thm:deformations_affine_toric_pairs_easy}.

\begin{theorem} \label{thm:deformations_affine_toric_pairs}
Let $N$ be a lattice, let $\sigma \subseteq N_\RR$ be a strongly convex rational polyhedral cone with dimension $\rank N$, let $(Q,Q_0,Q_1, \dots, Q_k,w)$ be a $\partial$-deformation datum for $(N,\sigma)$, and let $\tilde{N}$, $\tilde{\sigma}$ and $\tilde{w}$ be as in Notation~\ref{not:tilde_N_tilde_sigma_tilde_w}.

Let $X$ be the affine toric variety associated to $\sigma$, let $\partial X$ be the toric boundary of $X$, and let $\tilde{X}$ be the affine toric variety associated to $\tilde{\sigma}$.
Consider the reduced effective divisor $\cD$ on $\tilde{X}$ defined by the homogeneous ideal generated by the following monomial in the Cox coordinates of $\tilde{X}$:
\begin{equation} \label{eq:monomial_theorem_deformations_affine_pairs}
\prod_{\substack{\xi \in \tilde{\sigma}(1) \colon \\ \forall i \in \{ 1, \dots, k \}, \langle e_i^*, \xi \rangle \leq 0}} x_\xi.
\end{equation}

Let $t_1, \dots, t_k$ be the standard coordinates on $\AA^k_\CC$. Consider the closed subscheme $\cX$ of $\tilde{X} \times_{\Spec \CC} \AA^k_\CC = \toric{\CC[t_1, \dots, t_k]}{\tilde{\sigma}}$ defined by the homogeneous ideal generated by the following trinomials in Cox coordinates:
\begin{equation} \label{eq:trinomials_theorem_deformations_affine_pairs}
\prod_{\substack{\xi \in \tilde{\sigma}(1) \colon \\ \langle e_i^*, \xi \rangle > 0}} x_\xi^{\langle e_i^*, \xi \rangle} - 
\prod_{\substack{\xi \in \tilde{\sigma}(1) \colon \\ \langle e_i^*, \xi \rangle < 0}} x_\xi^{-\langle e_i^*, \xi \rangle} -
t_i \prod_{\xi \in \tilde{\sigma}(1)} x_\xi^{\langle \tilde{w}, \xi \rangle} \prod_{\substack{\xi \in \tilde{\sigma}(1) \colon \\ \langle e_i^*, \xi \rangle < 0}} x_\xi^{-\langle e_i^*, \xi \rangle}
\end{equation}
for $i=1, \dots, k$.

Then the diagram
\[
\xymatrix{
\cX \cap (\cD \times_{\Spec \CC} \AA^k_\CC) \ar[rd] \ar@{^{(}->}[r] & \cX \ar[d] \\
& \AA^k_\CC
}
\]
induces a formal deformation of the toric pair $(X, \partial X)$ over $\CC [ \![ t_1, \dots, t_k ] \! ]$.
\end{theorem}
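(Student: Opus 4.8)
The plan is to build the formal deformation out of the single diagram $\cB \into \cX \to \AA^k_\CC$, where $\cB := \cX \cap (\cD \times_{\Spec \CC} \AA^k_\CC)$, by base change. Since $\cX \to \AA^k_\CC$ is already formally flat with central fibre $X$ by Theorem~\ref{thm:mavlyutov}, the two things to prove are that the central fibre of $\cB \to \AA^k_\CC$ is exactly the closed subscheme $\partial X \into X$, and that $\cB \to \AA^k_\CC$ is formally flat over the origin. First note that $\cD$ is a well-defined reduced effective divisor, because \eqref{eq:monomial_theorem_deformations_affine_pairs} is a squarefree monomial in the total coordinate ring of $\tilde X$. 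Throughout I would use the description of the rays of $\tilde\sigma$ from the proof of Lemma~\ref{lemma:sigma_tilde_is_full_dimensional}: each ray of $\tilde\sigma$ is either (a) a ray of $\sigma$, or (b) of the form $\lambda(v_0 - e_1 - \cdots - e_k)$ with $v_0 \in \vertices(Q_0)$, $\lambda > 0$, or (c) equal to $v + e_i$ with $v \in \vertices(Q_i)$ and $1 \le i \le k$ (here hypothesis (iv) is used, since $v \in N$ forces $v + e_i$ to be primitive). Hence the set $S$ of rays $\xi$ with $\langle e_i^*, \xi\rangle \le 0$ for all $i$ that appears in \eqref{eq:monomial_theorem_deformations_affine_pairs} consists precisely of the rays of types (a) and (b), whereas a type-(c) ray $v + e_i$ has $\langle e_i^*, \cdot\rangle = 1$ and $\langle e_j^*, \cdot\rangle = 0$ for $j \ne i$.

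For the central fibre: by Theorem~\ref{thm:mavlyutov}(A) the embedding $X \into \tilde X$ corresponds to the surjective semigroup homomorphism $\phi\colon \tilde\sigma^\vee \cap \tilde M \to \sigma^\vee \cap M$, $u + \sum a_i e_i^* \mapsto u$, so the ideal of $\cD \cap X$ in $\cO_X = \CC[\sigma^\vee \cap M]$ is $\phi(I_\cD)$, where $I_\cD \subseteq \CC[\tilde\sigma^\vee \cap \tilde M]$ is (under the Cox isomorphism) spanned by the characters $\chi^{\tilde u}$ with $\langle \tilde u, \xi\rangle \ge 1$ for all $\xi \in S$. The key point is to show $\phi(I_\cD) = I_{\partial X}$, the span of $\chi^u$ with $u$ in the relative interior of $\sigma^\vee$. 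For $\subseteq$: given such $\tilde u$ and a ray $\rho \in \sigma(1)$, if $\rho$ is a ray of $\tilde\sigma$ then $\rho \in S$ and $\langle \phi(\tilde u), \rho\rangle = \langle \tilde u, \rho\rangle \ge 1$; otherwise $\rho$ lies in the relative interior of a face $\gamma$ of $\tilde\sigma$ of dimension $\ge 2$, which must contain a ray in $S$ (a positive combination of type-(c) rays cannot have vanishing $\ZZ^k$-component), so $\langle \phi(\tilde u), \rho\rangle = \langle \tilde u, \rho\rangle > 0$; in either case $\langle \phi(\tilde u), \rho\rangle$ is a positive integer, hence $\ge 1$. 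For $\supseteq$: given $u$ in the relative interior of $\sigma^\vee$, the canonical lift $\tilde u = u - \sum_{i=1}^k \min_{Q_i} u \cdot e_i^*$ lies in $\tilde\sigma^\vee \cap \tilde M$ — here (iv) gives $\min_{Q_i} u \in \ZZ$, and $\min_Q u = \sum_{i=1}^k\min_{Q_i} u + \min_{Q_0} u > 0$ gives the remaining inequality defining $\tilde\sigma^\vee$ — and $\langle \tilde u, \xi\rangle \ge 1$ for every $\xi \in S$: immediate for type (a), while for $\xi = \lambda(v_0 - e_1 - \cdots - e_k)$ of type (b) one computes $\langle \tilde u, v_0 - e_1 - \cdots - e_k\rangle = \langle u, v_0\rangle + \sum_{i=1}^k \min_{Q_i} u \ge \min_Q u > 0$, so again $\langle \tilde u, \xi\rangle$ is a positive integer. (The equality $\sum_{i=0}^k \min_{Q_i} u = \min_Q u$ is the unfloored analogue of \eqref{eq:equality_minima_floor}.) This gives $\cD \cap X = \partial X$.

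For flatness, fix a $(t_1, \dots, t_k)$-primary ideal $\frakq$ and set $A = \CC[t_1, \dots, t_k]/\frakq$, an artinian local $\CC$-algebra. In the total coordinate ring $B = A[x_\xi \mid \xi \in \tilde\sigma(1)]$ of $\toric{A}{\tilde\sigma}$, the closed subscheme $\cB \times_{\AA^k_\CC} \Spec A$ is cut out by the $k$ trinomials \eqref{eq:trinomials_theorem_deformations_affine_pairs} together with the monomial \eqref{eq:monomial_theorem_deformations_affine_pairs}, so by Lemma~\ref{lemma:complete_intersection_flatness_neighbourhood} and Lemma~\ref{lemma:flatness_sheafification_graded_module} it is enough that these $k+1$ elements, in this order, form a regular sequence on $B \otimes_A \CC = \CC[x_\xi]$. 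Reducing modulo $\frakm_A$, the trinomials become the binomials $y_i - z_i$ of Theorem~\ref{thm:mavlyutov}(A), which already form a regular sequence, and the monomial becomes $m = \prod_{\xi \in S} x_\xi$; so it remains to show $m$ is a non-zerodivisor on $R = \CC[x_\xi]/(y_1 - z_1, \dots, y_k - z_k)$. As $R$ is a complete intersection, $\Spec R$ is equidimensional of codimension $k$. No component of $\Spec R$ can lie in $\{x_\rho = 0\}$ for $\rho$ of type (a), since $x_\rho$ occurs in none of the binomials and so is a free variable; and no component $V$ can lie in $\{x_\beta = 0\}$ for $\beta$ of type (b), because there $x_\beta$ divides every $z_i$, forcing $z_i|_V = y_i|_V = 0$ and hence $V \subseteq \{x_\beta = 0\} \cap V(y_1) \cap \cdots \cap V(y_k)$, a union of coordinate subspaces of codimension $\ge k+1$. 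Therefore $m$ avoids every minimal prime of $R$, and since $R$ has no embedded primes $m$ is a non-zerodivisor, so the $k+1$ elements form a regular sequence and $\cB \times_{\AA^k_\CC} \Spec A$ is flat over $\Spec A$.

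Finally, taking $\frakq = (t_1, \dots, t_k)^{n+1}$ for each $n \in \NN$ and setting $X_n = \cX \times_{\AA^k_\CC} \Spec A$, $B_n = \cB \times_{\AA^k_\CC} \Spec A$ yields the required compatible system of diagrams: flatness of $X_n$ over $A$ is Theorem~\ref{thm:mavlyutov}(B), flatness of $B_n$ is what was just proved, the zeroth diagram is $\partial X \into X$ over $\Spec\CC$ by Theorem~\ref{thm:mavlyutov}(A) and the central-fibre computation, and compatibility under $A/\frakm_A^{n+2} \onto A/\frakm_A^{n+1}$ is automatic because every diagram is a base change of $\cB \into \cX \to \AA^k_\CC$. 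The main obstacle is the scheme-theoretic identity $\cD \cap X = \partial X$: the set-theoretic statement is easy, but the equality of ideals requires the precise choice of lift in the semigroup computation and genuinely uses hypothesis (iv) together with the integrality of the pairings; the regular-sequence argument for flatness, which likewise relies on the explicit form of the binomials forced by (iv), is the other point that needs care.
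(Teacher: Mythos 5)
Your proposal is correct, and although its skeleton (identify the central fibre, then deduce formal flatness from a regular sequence via Lemma~\ref{lemma:complete_intersection_flatness_neighbourhood} and Lemma~\ref{lemma:flatness_sheafification_graded_module}, exactly as in Theorem~\ref{thm:mavlyutov}(B)) coincides with the paper's, both key computations are carried out by genuinely different arguments. For the central fibre, the paper proves the equality $\mathrm{Cox}(\overline{I})=\overline{J}\cap S_0$ inside the Cox ring of $\tilde{X}$; there the containment $\subseteq$ must handle an \emph{arbitrary} lift $\tilde{u}$ of an interior character, which is what forces the rewriting $x^{\tilde{u}}=p(y_i-z_0)+pz_0$ when $\tilde{u}$ vanishes somewhere on $Q_0-e_1-\cdots-e_k$. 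You instead compute the image $\psi(I_{\cD})$ in $\cO_X$, so a single well-chosen lift per $u\in\interior(\sigma^\vee)\cap M$ suffices (your canonical lift, where hypothesis (iv) enters), and the remaining containment --- the paper's $\supseteq$, that divisibility by the monomial forces $\phi(\tilde{u})\in\interior(\sigma^\vee)$ --- you prove by decomposing a ray of $\sigma$ inside the relative interior of its minimal face of $\tilde{\sigma}$, rather than by the paper's direct evaluation on a general element $v=\lambda(q_0+q_1+\cdots+q_k)+v_\sigma$ of $\sigma$. Be aware that your reduction silently uses that the subscheme of $\tilde{X}$ cut out by $\overline{J}=J+Sz$ is the scheme-theoretic intersection $X\cap\cD$, i.e. $\overline{J}\cap S_0=(J\cap S_0)+(Sz\cap S_0)$; this is true (taking the degree-zero graded piece is an exact functor, or argue via base change of fibre products) and of the same routine nature as compatibilities the paper leaves implicit, but it is precisely what the paper's direct computation of $\overline{J}\cap S_0$ avoids having to say. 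For flatness, where the paper shows that $y_1-z_0,\dots,y_k-z_0,z$ is a regular sequence by the elementary free-basis trick of Lemma~\ref{lemma:regular_sequence_binomials_monomial}, you invoke unmixedness of complete intersections and a codimension count on the components of $V(y_1-z_0,\dots,y_k-z_0)$; this is shorter but leans on Cohen--Macaulay machinery, whereas the paper's lemma is self-contained and is reused verbatim in the projective setting (Theorem~\ref{thm:deformations_projective_toric_pairs} and Theorem~\ref{thm:mutations_induce_deformations}).
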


\begin{example} \label{ex:cA1} 
	In the lattice $N = \ZZ^3$ consider the cone
	\[
	\sigma = \cone{ \begin{pmatrix}
		1 \\ 1 \\ 0
		\end{pmatrix},
		\begin{pmatrix}
		-1 \\ 1 \\ 0
		\end{pmatrix},
		\begin{pmatrix}
		0 \\ 0 \\ 1
		\end{pmatrix}  } \subseteq N_\RR.
	\]
	The corresponding affine toric variety $X = \toric{\CC}{\sigma}$ is the $\mathrm{cA}_1$-singularity:
	\[
	X = \Spec \CC[x,y,z,u]/(xy-u^2)
	\]
	where the variables $x,y,z,u$ correspond to the following generators of the monoid $\sigma^\vee \cap M$: $(1,1,0)$, $(-1,1,0)$, $(0,0,1)$, $(0,1,0)$. The toric boundary is
	\[
	\partial X = \Spec \CC[x,y,z,u]/(xy-u^2, zu).
	\]
	
	Fix $p \in \NN$ and consider the $\partial$-deformation datum $(Q_0+Q_1, Q_0, Q_1,w)$ for $(N,\sigma)$ (see Definition~\ref{def:deformation_datum}) given by the polytopes 
	\begin{align*}
	Q_0 = \left\{ \begin{pmatrix}
	-1/2 \\ 1/2 \\ 0
	\end{pmatrix} \right\}
	\qquad \text{and} \qquad
	Q_1 = \conv{\begin{pmatrix}
		0 \\ 0 \\ 0
		\end{pmatrix}
		,
		\begin{pmatrix}
		1 \\ 0 \\ 0
		\end{pmatrix}  },
	\end{align*}
	and by the character $w = (0,-2,p) \in M$.
	Following Notation~\ref{not:tilde_N_tilde_sigma_tilde_w}, inside the bigger lattice $\tilde{N} = \ZZ^4$ we construct the cone
\begin{align*}
\tilde{\sigma} &= \cone{
\begin{pmatrix}
1 \\ 1 \\ 0 \\ 0
\end{pmatrix},
\begin{pmatrix}
-1 \\ 1 \\ 0 \\ 0
\end{pmatrix},\begin{pmatrix}
0 \\ 0 \\ 1 \\ 0
\end{pmatrix},\begin{pmatrix}
-1/2 \\ 1/2 \\ 0 \\ -1
\end{pmatrix},\begin{pmatrix}
0 \\ 0 \\ 0 \\ 1
\end{pmatrix},\begin{pmatrix}
1 \\ 0 \\ 0 \\ 1
\end{pmatrix}
} \\
&= \cone{
\begin{pmatrix}
0 \\ 0 \\ 1 \\ 0
\end{pmatrix},\begin{pmatrix}
-1 \\ 1 \\ 0 \\ -2
\end{pmatrix},\begin{pmatrix}
0 \\ 0 \\ 0 \\ 1
\end{pmatrix},\begin{pmatrix}
1 \\ 0 \\ 0 \\ 1
\end{pmatrix}
}.
\end{align*}	
One can see that these last four vectors are a basis of $\tilde{N} = \ZZ^4$, therefore the affine toric variety $\tilde{X}$ associated to $\tilde{\sigma}$ is the affine space $\AA^4_\CC = \Spec \CC[x,y,z,u]$.
Now the variables $x,y,z,u$ correspond to the following elements of $\tilde{M} = \ZZ^4$: $(1,1,0,0)$, $(-1,1,0,1)$, $(0,0,1,0)$, $(0,1,0,0)$, which form the dual basis to the primitive generators of the rays of $\tilde{\sigma}$. In this particular case the Cox coordinates of $\tilde{X}$ coincide with $x,y,z,u$.

The trinomial \eqref{eq:trinomials_theorem_deformations_affine_pairs} is $xy-u^2 -t z^p$ and the monomial 
\eqref{eq:monomial_theorem_deformations_affine_pairs} is $zu$.
 By Theorem~\ref{thm:deformations_affine_toric_pairs} we consider the following closed subschemes of $\tilde{X} \times_\CC \Spec \CC[t]$:
	\begin{align*}
	\cX &= \Spec \CC[x,y,z,u,t]/(xy - u^2 - t z^p), \\
	\cB &= \Spec \CC[x,y,z,u,t]/(xy - u^2 - t z^p, zu).
	\end{align*}
	The diagram
	\[
	\xymatrix{
		\cB \ \ar@{^(->}[r] \ar[rd] & \cX \ar[d] \\
		& \Spec \CC[t]
	}
	\]
	induces a formal deformation of the pair $(X, \partial X)$ over $\CC [ \! [ t ] \! ]$, by base changing to $\Spec \CC[t]/(t^{n+1})$ for each $n \in \NN$.
\end{example}

The rest of this section is devoted to the proof of Theorem~\ref{thm:deformations_affine_toric_pairs}.

\begin{lemma} \label{lemma:monomials_free_module}
Let $S$ be a polynomial ring over $\CC$ in finitely many indeterminates. Let $m_1, \dots, m_t \in S \setminus \{ 1 \}$ be some monomials such that the $t$ sets of indeterminates appearing in these monomials have empty pairwise intersections. Let $R$ be the $\CC$-subalgebra of $S$ generated by $m_1, \dots, m_t$. Then $m_1, \dots, m_t$ are algebraically independent over $\CC$ and $S$ is a free $R$-module.
\end{lemma}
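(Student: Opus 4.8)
The plan is to reduce the statement to an elementary combinatorial fact about monomials by exploiting that the variable sets appearing in $m_1, \dots, m_t$ are pairwise disjoint. First I would partition the indeterminates of $S$: let $V_j$ be the (finite) set of indeterminates occurring in $m_j$, and let $V_0$ be the set of indeterminates occurring in no $m_j$. By hypothesis the $V_j$ for $j = 1, \dots, t$ are pairwise disjoint, so $S = S_0 \otimes_\CC S_1 \otimes_\CC \cdots \otimes_\CC S_t$ where $S_0 = \CC[V_0]$ and $S_j = \CC[V_j]$. Since $R$ is generated by $m_1, \dots, m_t$ with $m_j \in S_j$, we have $R = R_1 \otimes_\CC \cdots \otimes_\CC R_t$ where $R_j = \CC[m_j] \subseteq S_j$. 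Algebraic independence of $m_1, \dots, m_t$ over $\CC$ then follows from the fact that each single monomial $m_j \neq 1$ is transcendental over $\CC$ (it is a non-constant element of the domain $S_j$, hence not algebraic over the field $\CC$) together with the standard fact that a tensor product over $\CC$ of polynomial rings in one algebraically independent element each is again a polynomial ring; concretely, $R \cong \CC[m_1, \dots, m_t]$ is a polynomial ring in $t$ variables.

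For the freeness statement, since $S = S_0 \otimes_\CC (S_1 \otimes_\CC \cdots \otimes_\CC S_t)$ and $S_0$ is a free $\CC$-module, it suffices to show that $S_1 \otimes_\CC \cdots \otimes_\CC S_t$ is free over $R = R_1 \otimes_\CC \cdots \otimes_\CC R_t$; indeed, a basis of $S_1 \otimes \cdots \otimes S_t$ over $R$ tensored with a $\CC$-basis of $S_0$ gives a basis of $S$ over $R$. Moreover, a tensor product over $\CC$ of free module maps is free with basis the product of bases, so it is enough to treat each factor separately: I would show that $S_j = \CC[V_j]$ is a free module over $R_j = \CC[m_j]$ for a single non-constant monomial $m_j$ in the variables $V_j$.

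So the crux — and really the only nontrivial point — is the following: if $m = x_1^{a_1} \cdots x_r^{a_r}$ is a monomial with each $a_i \geq 0$ and at least one $a_i \geq 1$, then $\CC[x_1, \dots, x_r]$ is a free $\CC[m]$-module. To see this, pick an index $\ell$ with $a_\ell \geq 1$. Every monomial $x_1^{c_1} \cdots x_r^{c_r}$ can be written uniquely as $m^q \cdot x_1^{c_1} \cdots x_\ell^{s} \cdots x_r^{c_r}$ where $q = \lfloor c_\ell / a_\ell \rfloor$ and $s = c_\ell - q a_\ell \in \{0, 1, \dots, a_\ell - 1\}$: here I use that reducing the exponent of $x_\ell$ modulo $a_\ell$ forces the quotient $q$, and then the remaining exponents of the other variables are whatever is left over (which stays non-negative because $q a_\ell \le c_\ell$ only involves the $x_\ell$-exponent). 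Hence the monomials in $\CC[x_1, \dots, x_r]$ whose $x_\ell$-exponent is strictly less than $a_\ell$ form a $\CC[m]$-module basis of $\CC[x_1, \dots, x_r]$: they span because of the decomposition just given, and they are $\CC[m]$-linearly independent because distinct basis monomials multiplied by distinct powers of $m$ yield distinct monomials of $\CC[x_1, \dots, x_r]$ (the $x_\ell$-exponent $s$ and the power $q$ are recovered by division with remainder, and then the other exponents are determined). This establishes that each $S_j$ is $R_j$-free, and assembling the tensor factors as above completes the proof. The main obstacle is purely bookkeeping: making the tensor-product reduction and the uniqueness of the division-with-remainder decomposition precise; there is no conceptual difficulty.
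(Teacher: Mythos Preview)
Your tensor-product reduction to the single-monomial case is fine and matches the paper exactly. The problem is the single-monomial step: your proposed basis does not span. Take $S_j = \CC[x_1,x_2]$ and $m = x_1 x_2$, pick $\ell = 1$ so $a_\ell = 1$. Your basis is the set of monomials with $x_1$-exponent $0$, i.e.\ $\{x_2^j : j \ge 0\}$. But $x_1$ is not in the $\CC[m]$-span of this set, since $m^q x_2^j = x_1^{q} x_2^{q+j}$ never equals $x_1$. The mistake is in the line ``the remaining exponents of the other variables are whatever is left over (which stays non-negative \dots)'': multiplying by $m^q$ raises \emph{every} exponent, not just the $\ell$th, so subtracting $m^q$ forces $c_i - q a_i$ for all $i$, and these can be negative when $q$ is determined solely by $c_\ell$.

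The fix the paper uses is to take as basis the set
\[
\bigcup_{i=1}^{r} \{\, x_1^{b_1}\cdots x_r^{b_r} : b_i < a_i \,\},
\]
i.e.\ monomials for which \emph{some} exponent falls below the corresponding exponent in $m$. Spanning then comes from taking $q = \min_i \lfloor c_i / a_i \rfloor$ (rather than fixing a single index in advance), which guarantees all $c_i - q a_i \ge 0$ and at least one is $< a_i$; linear independence follows because $q$ is recovered from any product $m^q \cdot (\text{basis element})$ as that same minimum.
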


\begin{proof}
It is clear that the monomials $m_1, \dots, m_t$ are algebraically independent over $\CC$. Another way to see this is to notice that they form a regular sequence in $S$ and then use \cite[Exercise~16.6]{matsumura}.

Now we want to prove that $S$ is a free $R$-module. For each $i=1, \dots, t$, let $S_i$ be the polynomial ring over $\CC$ in the indeterminates that appear in $m_i$ and let $R_i \subseteq S_i$ be the $\CC$-subalgebra generated by $m_i$.
Let $S_0$ be the polynomial ring over $\CC$ in the indeterminates of $S$ that do not appear in any $m_i$'s. If we prove that $S_i$ is a free $R_i$-module for each $i=1, \dots, t$, then $S = S_0 \otimes_\CC S_1 \otimes_\CC \cdots \otimes_\CC S_t$ will be free over $R = R_1 \otimes_\CC \cdots \otimes_\CC R_t$.

Therefore we may assume that $t = 1$ and that all the indeterminates of $S$ appear in $m := m_1$, i.e.\ $S = \CC[x_1, \dots, x_n]$ and $m = x_1^{a_1}\cdots x_n^{a_n}$ with $a_1, \dots, a_n \in \NN^+$. The set
\[
\bigcup_{i=1}^n \{ x_1^{b_1} \cdots x_n^{b_n} \in S \mid b_i < a_i \}
\]
is a free basis of $S$ as $R$-module.
\end{proof}

\begin{lemma} \label{lemma:regular_sequence_binomials_monomial}
Let $S$ be a polynomial ring over $\CC$ in finitely many indeterminates. Let $y_1, \dots, y_k, z_1, \dots, z_r\in S \setminus \{ 1 \}$ be some monomials such that the $k+r$ sets of indeterminates appearing in these monomials have empty pairwise intersections. Fix $c_1, \dots, c_r \in \NN$ and consider the monomial $z_0 = z_1^{c_1} \cdots z_r^{c_r}$. Then
\begin{equation} \label{eq:lemma_regular_sequence_binomials_monomial}
y_1 - z_0, \dots, y_k - z_0, z_1 \cdots z_r
\end{equation}
is a regular sequence.
\end{lemma}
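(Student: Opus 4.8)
The plan is to descend the problem to a polynomial subring of $S$ on which all three kinds of generators become completely transparent, to verify the regularity there, and then to transport it back to $S$ by faithful flatness.

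First I would apply Lemma~\ref{lemma:monomials_free_module} to the monomials $y_1, \dots, y_k, z_1, \dots, z_r$, whose supports are pairwise disjoint by hypothesis: the $\CC$-subalgebra $R \subseteq S$ that they generate is a polynomial ring $R = \CC[y_1, \dots, y_k, z_1, \dots, z_r]$ in the $k+r$ algebraically independent elements $y_i, z_j$, and $S$ is a free, hence faithfully flat, $R$-module. Since $z_0 = z_1^{c_1} \cdots z_r^{c_r}$ lies in $R$, the whole sequence \eqref{eq:lemma_regular_sequence_binomials_monomial} consists of elements of $R$. Because a regular sequence in a ring stays a regular sequence under a faithfully flat ring extension (one way to see this is a short induction on the length of the sequence: tensoring with a flat module preserves injectivity of the multiplication maps and commutes with passage to quotients, while faithful flatness keeps the generated ideal proper; cf.\ \cite{matsumura}), it suffices to prove that \eqref{eq:lemma_regular_sequence_binomials_monomial} is a regular sequence in the polynomial ring $R$.

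This last point I would check by direct elimination of variables. Renaming the algebraically independent generators as indeterminates $Y_1, \dots, Y_k, Z_1, \dots, Z_r$ and setting $Z_0 := Z_1^{c_1} \cdots Z_r^{c_r}$, the relations $Y_1 - Z_0, \dots, Y_i - Z_0$ simply substitute fixed polynomials in $Z_1, \dots, Z_r$ for $Y_1, \dots, Y_i$, so for every $i$ with $0 \le i \le k$ there is a natural isomorphism
\[
R / (Y_1 - Z_0, \dots, Y_i - Z_0) \;\cong\; \CC[Y_{i+1}, \dots, Y_k, Z_1, \dots, Z_r].
\]
In particular each such quotient is an integral domain, and for $0 \le i < k$ the element $Y_{i+1} - Z_0$ is a nonzero element of it, hence a nonzerodivisor modulo $Y_1 - Z_0, \dots, Y_i - Z_0$. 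Dividing out all $k$ binomials leaves $\CC[Z_1, \dots, Z_r]$, in which the monomial $Z_1 \cdots Z_r$ is visibly a nonzerodivisor, and the final quotient $\CC[Z_1, \dots, Z_r] / (Z_1 \cdots Z_r)$ is nonzero (this is where one uses $r \ge 1$, so that $Z_1 \cdots Z_r$ is not a unit). Hence \eqref{eq:lemma_regular_sequence_binomials_monomial} is a regular sequence in $R$, and the lemma follows.

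The argument is essentially bookkeeping; the only genuine step is the reduction to the polynomial subring $R$, which is exactly what Lemma~\ref{lemma:monomials_free_module} was designed to supply, together with the standard stability of regular sequences under faithfully flat extensions.
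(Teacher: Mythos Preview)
Your proof is correct and follows essentially the same route as the paper's: both reduce to the polynomial subring $R = \CC[Y_1,\dots,Y_k,Z_1,\dots,Z_r]$ via Lemma~\ref{lemma:monomials_free_module} and then transport regularity back to $S$ by flatness. The only cosmetic difference is in verifying regularity inside $R$: the paper invokes the $\CC$-algebra automorphism $Y_i \mapsto Y_i - Z_0$, $Z_j \mapsto Z_j$ to transform the obvious regular sequence $Y_1,\dots,Y_k,Z_1\cdots Z_r$ into the desired one, whereas you argue by successive elimination of the $Y_i$ --- these are two descriptions of the same fact.
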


\begin{proof}
Consider the polynomial ring $R = \CC[Y_1, \dots, Y_k, Z_1, \dots, Z_r]$ where the $Y_i$'s and the $Z_j$'s are indeterminates. Consider the $\CC$-algebra homomorphism $\phiv \colon R \to S$ defined by $Y_i \mapsto y_i$ and $Z_j \mapsto z_j$. By Lemma~\ref{lemma:monomials_free_module}, $\phiv$ is injective and flat.

Consider the monomial $Z_0 = Z_1^{c_1} \cdots Z_r^{c_r}$. Consider the $\CC$-algebra automorphism $\theta$ of $R$ that fixes the $Z_j$'s and maps $Y_i$ to $Y_i - Z_0$. By applying $\theta$ to the regular sequence $Y_1, \dots, Y_k, Z_1\cdots Z_r$ we get the regular sequence
\begin{equation*}
Y_1 - Z_0, \dots, Y_k - Z_0, Z_1 \cdots Z_r.
\end{equation*}
Now, by applying the flat map $\phiv$ to this regular sequence, we get that the sequence \eqref{eq:lemma_regular_sequence_binomials_monomial} is regular.
\end{proof}

\begin{remark} \label{rmk:fischer_shapiro_new_proof}
A slight generalisation of Lemma~\ref{lemma:regular_sequence_binomials_monomial} can be used to prove that the binomials in Theorem~\ref{thm:mavlyutov}(A) form a regular sequence, without using \cite{fischer_shapiro} and Lemma~\ref{lemma:fischer_shapiro}.
\end{remark}

\begin{proof}[Proof of Theorem~\ref{thm:deformations_affine_toric_pairs}]
By Theorem~\ref{thm:mavlyutov}, it is enough to deal with the toric boundary. Here we adopt some notations used in the proof of Lemma~\ref{lemma:cox_equations_slicing_cone}. Let $I$ be the kernel of the surjective ring homomorphism $\psi \colon \CC[\tilde{\sigma}^\vee \cap \tilde{M}] \to \CC[\sigma^\vee \cap M]$ that is associated to the surjective semigroup homomorphism $\phi \colon \tilde{\sigma}^\vee \cap \tilde{M} \to \sigma^\vee \cap M$ given by $u + a_1 e_1^* + \cdots + a_k e_k^* \mapsto u$. The ideal of the toric boundary $\partial X$ in $X$ is
\begin{equation*}
\bigoplus_{u \in \interior(\sigma^\vee) \cap M} \CC \chi^u.
\end{equation*}
Therefore the ideal of $\partial X$ in $\tilde{X}$ is
\begin{align*}
\overline{I} &:= \psi \inv \left( \bigoplus_{u \in \interior(\sigma^\vee) \cap M} \CC \chi^u \right) \\ &= I + \sum_{u + a_1 e_1^* + \cdots + a_k e_k^* \in  ((\interior(\sigma^\vee) \cap M) \times \ZZ^k) \cap \tilde{\sigma}^\vee}  \CC \chi^{u + a_1 e_1^* + \cdots + a_k e_k^*}.
\end{align*}

Now we consider the Cox ring of $\tilde{X}$: $S = \CC[x_\xi \mid \xi \in \tilde{\sigma}(1)]$ with its $G_{\tilde{\sigma}}$-grading. In the proof of Theorem~\ref{thm:mavlyutov}(A) we had the following description of the rays of $\tilde{\sigma}$.
\begin{itemize}
\item Rays passing through the vertices of $Q_0 - e_1 - \cdots -e_k$. We denote by $z_{0,1}, \dots, z_{0,s_0}$ the corresponding Cox coordinates.
\item Rays passing through the vertices of $Q_i + e_i$, as $ i = 1, \dots, k$. We denote by $y_{i,1}, \dots, y_{i, s_i}$ the corresponding Cox coordinates.
\item Rays of $\sigma$ that are not in the cone generated by the previous rays. We denote by $z_{\sigma, 1}, \dots, z_{\sigma, s_\sigma}$ the corresponding Cox coordinates.
\end{itemize}
Consider the following monomials in the Cox coordinates of $\tilde{X}$:
\begin{align*}
y_i &= \prod_{\substack{\xi \in \tilde{\sigma}(1) \colon \\ \langle e_i^*, \xi \rangle > 0}} x_\xi^{\langle e_i^*, \xi \rangle} = y_{i,1} \cdots y_{i, s_i} & & \qquad \text{for each } i \in \{ 1, \dots, k \}, \\
z_0 &= \prod_{\substack{\xi \in \tilde{\sigma}(1) \colon \\ \langle e_i^*, \xi \rangle < 0}} x_\xi^{-\langle e_i^*, \xi \rangle} = z_{0,1}^{c_1} \cdots z_{0, s_0}^{c_{s_0}} & & \qquad \text{for any } i \in \{ 1, \dots, k \}, \\
z_0^\mathrm{red} &= \prod_{\substack{\xi \in \tilde{\sigma}(1) \colon \\ \langle e_i^*, \xi \rangle < 0}} x_\xi = z_{0,1} \cdots z_{0, s_0} & & \qquad \text{for any } i \in \{ 1, \dots, k \}, \\
z_\sigma &= \prod_{\substack{\xi \in \tilde{\sigma}(1) \colon \\ \langle e_i^*, \xi \rangle = 0}} x_\xi = z_{\sigma, 1} \cdots z_{\sigma, s_\sigma} & & \qquad \text{for any } i \in \{ 1, \dots, k \}, \\
z &= \prod_{\substack{\xi \in \tilde{\sigma}(1) \colon \\ \langle e_i^*, \xi \rangle \leq 0}} x_\xi = z_0^\mathrm{red} z_\sigma  & & \qquad \text{for any } i \in \{ 1, \dots, k \}.
\end{align*}
The exponents $c_0, \dots, c_{s_0}$ are the minimal positive integers by which we have to multiply the vertices of $Q_0$ to get lattice points.
Here we have used (iv) in Definition~\ref{def:deformation_datum} to deduce that $y_i$ are reduced monomials.
We see that $y_i$ are exactly the ones used in the proof of Lemma~\ref{lemma:cox_equations_slicing_cone}, whereas the monomials $z_1, \dots, z_k$ there coincides with $z_0$ in our case. We see that $y_i - z_0$ is the binomial obtained from the trinomial \eqref{eq:trinomials_theorem_deformations_affine_pairs} by setting $t_i=0$ and $z$ is the monomial in \eqref{eq:monomial_theorem_deformations_affine_pairs}.
Let $J \subseteq S$ be the ideal generated by $y_1 - z_0, \dots, y_k - z_0$ and let $\overline{J} = J + S z$. We already know, from Lemma~\ref{lemma:cox_equations_slicing_cone} or Theorem~\ref{thm:mavlyutov}, that the Cox isomorphism between $\CC[\tilde{\sigma}^\vee \cap M]$ and $S_0 \subseteq S$ maps
 the ideal $I$ onto the degree zero part of the ideal $J$, i.e.\ $\mathrm{Cox}(I) = J \cap S_0$. We have to prove that
\begin{equation} \label{eq:equality_cox_ideals_toric_boundary}
\mathrm{Cox}(\overline{I}) = \overline{J} \cap S_0.
\end{equation}
This equality will imply that the scheme-theoretic intersection $X \cap \cD$ coincides with $\partial X$.

We now prove the containment $\subseteq$ in \eqref{eq:equality_cox_ideals_toric_boundary}.  Since $\mathrm{Cox}(I) \subseteq J  \subseteq \overline{J} $, it is enough to show that $\mathrm{Cox}(\chi^{\tilde{u}}) = x^{\tilde{u}} \in \overline{J}$ for all $\tilde{u} = u + a_1 e_1^* + \cdots + a_k e_k^* \in \tilde{\sigma}^\vee \cap \tilde{M}$  such that $u \in \interior(\sigma^\vee)$. We have that $z_\sigma$ divides $x^{\tilde{u}}$ because $u$ is in the strict interior of $\sigma^\vee$. Since $\tilde{u} \in \tilde{\sigma}^\vee$, $\tilde{u}$ cannot take negative values on $Q_0 - e_1 - \cdots - e_k$, $Q_1 + e_1, \dots, Q_k + e_k$. If $\tilde{u}$ is strictly positive on $Q_0 - e_1 - \cdots - e_k$, then $z_0^\mathrm{red}$ divides $x^{\tilde{u}}$, and hence $z = z_0^\mathrm{red} z_\sigma$ divides $x^{\tilde{u}}$, which implies that $x^{\tilde{u}}$ lies in $\overline{J}$ and we are done. So we may assume that $0 = \min_{Q_0 - e_1 - \cdots - e_k} \tilde{u} = \min_{Q_0} u - a_1 - \cdots - a_k$. Therefore, since $u \in \interior(\sigma^\vee)$ and $0 \notin Q$, we have
\begin{align*}
0 < \min_Q u 
= \min_{Q_0} u + \min_{Q_1} u + \cdots + \min_{Q_k} u 
= \sum_{i=1}^k \left( a_i + \min_{Q_i} u \right) 
= \sum_{i=1}^k \min_{Q_i + e_i} \tilde{u}.
\end{align*}
So, there exists $i \in \{ 1, \dots, k \}$ such that $\min_{Q_i + e_i} \tilde{u} > 0$. This implies that $y_i$ divides $x^{\tilde{u}}$, i.e.\ there exists a monomial $p$ such that $x^{\tilde{u}} = p y_i$. Since $z_\sigma \vert x^{\tilde{u}}$, we know that $z_\sigma \vert p$. By writing $x^{\tilde{u}} = p (y_i - z_0) + p z_0$  and by noting that $z$ divides $p z_0$, we conclude that $x^{\tilde{u}}$ lies in $\overline{J}$.

We now prove the containment $\supseteq$ in \eqref{eq:equality_cox_ideals_toric_boundary}. By using the same argument as in the second part of the proof of Lemma~\ref{lemma:cox_equations_slicing_cone}, it is enough to show that if $\tilde{u} = u + a_1 e_1^* + \cdots + a_k e_k^* \in \tilde{\sigma}^\vee \cap \tilde{M}$ is such that $x^{\tilde{u}} = p z$ for some monomial $p \in S$ then $u \in \interior(\sigma^\vee)$. Since $z$ divides $x^{\tilde{u}}$, we see that $\tilde{u}$ is strictly positive on $Q_0 - e_1 - \cdots - e_k$ and on the rays of $\sigma$ that are not in the cone generated by $Q_0 - e_1 - \cdots - e_k, Q_1 + e_1, \dots, Q_k + e_k$. Now we want to prove that $u$ is strictly positive on the non-zero elements of $\sigma$; if $v \in \sigma$  we can write $v = \lambda(q_0 - e_1 - \cdots - e_k) + \lambda(q_1 + e_1) + \cdots + \lambda (q_k + e_k) + v_\sigma = \lambda (q_0 + q_1 + \cdots + q_k) + v_\sigma$, for some $\lambda  \geq 0$, $q_i \in Q_i$, and $v_\sigma$ in the cone generated by the rays of $\sigma$ that are not in the cone generated by $Q_0 - e_1 - \cdots - e_k, Q_1 + e_1, \dots, Q_k + e_k$. We have
\begin{equation*}
\langle u, v \rangle = \lambda \left[ \langle \tilde{u}, q_0 - e_1 - \cdots - e_k \rangle + \langle \tilde{u}, q_1 + e_1 \rangle + \cdots + \langle \tilde{u}, q_k + e_k \rangle \right] + \langle \tilde{u}, v_\sigma \rangle.
\end{equation*}
Since $v \neq 0$, we have that either $\lambda > 0$ or $v_\sigma \neq 0$; this implies $\langle u, v \rangle > 0$.

This concludes the proof of the equality \eqref{eq:equality_cox_ideals_toric_boundary} and, consequently, of the fact that $X \cap \cD = \partial X$.

By Lemma~\ref{lemma:regular_sequence_binomials_monomial} we have that $y_1 - z_0, \dots, y_k - z_0, z$ is a regular sequence. Adapting the proof of Theorem~\ref{thm:mavlyutov}(B) we conclude.
\end{proof}

\section{Deformations of projective toric varieties}
\label{sec:deformations_projective_toric}

In this section we study deformations of polarised projective toric varieties. Our strategy is to deform the corresponding affine cones thanks to Mavlyutov's theorem (Theorem~\ref{thm:mavlyutov}) and then apply the Proj functor. We will use the lemmata in \S\ref{sec:polarised_projective_toric_varieties}.

\begin{theorem}\label{thm:deformations_projective_toric_varieties}
Let $N$ be a lattice of rank $n$, let $X$ be a projective $T_N$-toric variety, and let $D$ be an ample torus-invariant $\QQ$-Cartier $\QQ$-divisor on $X$. Let $\tau$ be the $(n+1)$-dimensional cone in the lattice $N_0 = N \oplus \ZZ e_0$ associated to the pair $(X,D)$ as in Lemma~\ref{lemma:polarised_projective_varieties}. Let $(Q,Q_0,Q_1, \dots, Q_k,w)$ be a deformation datum for $(N_0, \tau)$ with $w \in M \subseteq M_0$. Consider the lattices $\tilde{N} = N \oplus \ZZ e_1 \oplus \cdots \oplus \ZZ e_k$ and $\tilde{N}_0 = N \oplus \ZZ e_0 \oplus \ZZ e_1 \oplus \cdots \oplus \ZZ e_k$. Let $\tilde{\tau} \subseteq (\tilde{N}_0)_\RR$ and $\tilde{w} \in \tilde{M} \subseteq \tilde{M}_0$ be as in Notation~\ref{not:tilde_N_tilde_sigma_tilde_w}. Let $(\tilde{X}, \tilde{D})$ be the polarised projective toric variety associated to the cone $\tilde{\tau}$ via Lemma \ref{lemma:polarised_projective_varieties}.

\smallskip

{\rm (A)}  Then the inclusion $\tau \into \tilde{\tau}$ induces a toric closed embedding $X \into \tilde{X}$ which identifies $X$ with the closed subscheme of $\tilde{X}$ associated to the homogeneous ideal generated by the following binomials in the Cox coordinates of $\tilde{X}$:
\begin{equation} \label{eq:binomials_theorem_deformations_projective_varieties}
\prod_{\substack{\rho \in \tilde{\Sigma}(1) \colon \\ \langle e_i^*, \rho \rangle > 0}} x_\rho^{\langle e_i^*, \rho \rangle} - 
\prod_{\substack{\rho \in \tilde{\Sigma}(1) \colon \\ \langle e_i^*, \rho \rangle < 0}} x_\rho^{-\langle e_i^*, \rho \rangle}
\end{equation}
for $i = 1, \dots, k$, where $\tilde{\Sigma}$ is the fan of $\tilde{X}$ in $\tilde{N}$. Moreover, the $k$ binomials in \eqref{eq:binomials_theorem_deformations_projective_varieties} form a regular sequence.

\smallskip

{\rm (B)} 
Let $t_1, \dots, t_k$ be the standard coordinates on $\AA^k_\CC$. Consider the closed subscheme $\cX$ of $\tilde{X} \times_{\Spec \CC} \AA^k_\CC = \toric{\CC[t_1, \dots, t_k]}{\tilde{\Sigma}}$ defined by the homogeneous ideal generated by the following trinomials in Cox coordinates:
\begin{equation} \label{eq:trinomials_theorem_deformations_projective_varieties}
\prod_{\substack{\rho \in \tilde{\Sigma}(1) \colon \\ \langle e_i^*, \rho \rangle > 0}} x_\rho^{\langle e_i^*, \rho \rangle} - 
\prod_{\substack{\rho \in \tilde{\Sigma}(1) \colon \\ \langle e_i^*, \rho \rangle < 0}} x_\rho^{-\langle e_i^*, \rho \rangle} -
t_i \prod_{\rho \in \tilde{\Sigma}(1)} x_\rho^{\langle \tilde{w}, \rho \rangle} \prod_{\substack{\rho \in \tilde{\Sigma}(1) \colon \\ \langle e_i^*, \rho \rangle < 0}} x_\rho^{-\langle e_i^*, \rho \rangle}
\end{equation}
for $i=1, \dots, k$. Then the morphism $\cX \to \AA^k_\CC$ induces a deformation of $X$ over $\CC [ \![ t_1, \dots, t_k ] \! ]$ and over an open neighbourhood of the origin in $\AA^k_\CC$.
\end{theorem}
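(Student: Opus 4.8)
The plan is to deform the affine cone over $X$ by means of Mavlyutov's theorem and then apply $\Proj$, following the dictionary of \S\ref{sec:polarised_projective_toric_varieties}. Write $C = \Spec\CC[\tau^\vee\cap M_0]$ for the affine cone over $X$ (Lemma~\ref{lemma:polarised_toric_proj}) and set $\tilde C = \toric{\CC}{\tilde\tau}$. First I would check that $e_0\in\interior(\tilde\tau)$: since $\tilde\tau$ is full dimensional (Lemma~\ref{lemma:sigma_tilde_is_full_dimensional} applied to $(N_0,\tau)$, so $\dim\tilde\tau = \rank\tilde N_0 = \rank\tilde N + 1$), it is enough to show that every nonzero $\tilde u\in\tilde M_0$ which is non-negative on $\tilde\tau$ is strictly positive on $e_0$; its restriction to $(N_0)_\RR$ lies in $\tau^\vee$ and, unless it vanishes, is strictly positive on $e_0\in\interior(\tau)$, while if it vanishes then evaluating $\tilde u$ on the vertices of $Q_1+e_1,\dots,Q_k+e_k$ and of $Q_0-e_1-\cdots-e_k$ forces $\tilde u=0$. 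Hence, via Lemma~\ref{lemma:polarised_projective_varieties}, the cone $\tilde\tau$ determines the pair $(\tilde X,\tilde D)$, and by Lemma~\ref{lemma:polarised_toric_proj} the variety $\tilde C$ is the affine cone over $\tilde X$, i.e.\ $\tilde X = \Proj\CC[\tilde\tau^\vee\cap\tilde M_0]$ with the grading by $e_0$. Since $w\in M$ one also has $\langle\tilde w,e_0\rangle=0$, so $\tilde w\in\tilde M\subseteq\tilde M_0$.

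For part~(A): apply Theorem~\ref{thm:mavlyutov}(A) to the deformation datum $(Q,Q_0,\dots,Q_k,w)$ for $(N_0,\tau)$ to get the closed embedding $C\into\tilde C$, cut out in the Cox coordinates of $\tilde C$ by the $k$ binomials $\prod_{\langle e_i^*,\xi\rangle>0}x_\xi^{\langle e_i^*,\xi\rangle} - \prod_{\langle e_i^*,\xi\rangle<0}x_\xi^{-\langle e_i^*,\xi\rangle}$ (with $\xi$ ranging over $\tilde\tau(1)$), which form a regular sequence. Now use the ring homomorphism $S_{\tilde X}\to S_{\tilde C}$, $x_\rho\mapsto x_{\xi_\rho}^{b_\rho}$, of Lemma~\ref{lemma:cox_coordinates_proj_toric_and_affine_cone}, where $\xi_\rho = b_\rho\rho - a_\rho e_0$ is the ray of $\tilde\tau$ over $\rho\in\tilde\Sigma(1)$: since $\langle e_i^*,e_0\rangle=0$ we have $\langle e_i^*,\xi_\rho\rangle = b_\rho\langle e_i^*,\rho\rangle$, so this homomorphism carries the binomials \eqref{eq:binomials_theorem_deformations_projective_varieties} exactly to the above binomials on $\tilde C$. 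Consequently the ideal $J_{\tilde X}\subseteq S_{\tilde X}$ generated by \eqref{eq:binomials_theorem_deformations_projective_varieties} is $G_{\tilde\Sigma}$-homogeneous (for each $i$ the difference of the two exponent vectors of the $i$-th binomial is $(\langle e_i^*,\rho\rangle)_{\rho\in\tilde\Sigma(1)}$, the image of $e_i^*\in\tilde M$ in $\ZZ^{\tilde\Sigma(1)}$, hence trivial in $G_{\tilde\Sigma}$), and the degree-zero part $H$ of $J_{\tilde X}S_{\tilde C}$ is, under the Cox isomorphism, the ideal $I\subseteq\CC[\tilde\tau^\vee\cap\tilde M_0]$ of $C$ inside $\tilde C$, which is the kernel of the $\NN$-graded (with respect to $e_0$) surjection $\CC[\tilde\tau^\vee\cap\tilde M_0]\onto\CC[\tau^\vee\cap M_0]$ and therefore $\NN$-homogeneous. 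Lemma~\ref{lemma:cox_coordinates_proj_toric_and_affine_cone} then identifies the closed subscheme of $\tilde X$ cut out by $J_{\tilde X}$ with $\Proj\CC[\tilde\tau^\vee\cap\tilde M_0]/I = \Proj\CC[\tau^\vee\cap M_0] = X$, which is the image of the toric closed embedding $X\into\tilde X$ obtained by applying $\Proj$ to that surjection, i.e.\ the one induced by $\tau\into\tilde\tau$. Finally, by Lemma~\ref{lemma:monomials_free_module} the monomials $x_{\xi_\rho}^{b_\rho}$ (which involve pairwise disjoint Cox coordinates) make $S_{\tilde C}$ a free $S_{\tilde X}$-module of positive rank, so $S_{\tilde X}\to S_{\tilde C}$ is faithfully flat; since the images of the binomials \eqref{eq:binomials_theorem_deformations_projective_varieties} form a regular sequence in $S_{\tilde C}$, the binomials themselves form a regular sequence in $S_{\tilde X}$.

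For part~(B): the same computation, now with $\tilde w$ in place of $e_i^*$ (using $\langle\tilde w,e_0\rangle=0$, hence $\langle\tilde w,\xi_\rho\rangle = b_\rho\langle\tilde w,\rho\rangle$), shows that $S_{\tilde X}[t_1,\dots,t_k]\to S_{\tilde C}[t_1,\dots,t_k]$ sends the trinomials \eqref{eq:trinomials_theorem_deformations_projective_varieties} to the trinomials \eqref{eq:trinomials} of Theorem~\ref{thm:mavlyutov}(B) for $(N_0,\tau)$; since the latter are genuine polynomials and the homomorphism merely multiplies exponents by the positive integers $b_\rho$, the trinomials \eqref{eq:trinomials_theorem_deformations_projective_varieties} are genuine $G_{\tilde\Sigma}$-homogeneous polynomials, so $\cX$ is a well-defined closed subscheme of $\tilde X\times_{\Spec\CC}\AA^k_\CC$ and $\cX\to\AA^k_\CC$ is projective. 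Setting $t_1=\cdots=t_k=0$ turns these trinomials into the binomials \eqref{eq:binomials_theorem_deformations_projective_varieties}, so the fibre of $\cX$ over the origin is $X$ by part~(A). To establish flatness over infinitesimal neighbourhoods of the origin I would reprise the proof of Theorem~\ref{thm:mavlyutov}(B) verbatim with $\toric{A}{\tilde\Sigma}$ in place of $\toric{A}{\tilde\sigma}$, for $A=\CC[t_1,\dots,t_k]/\frakq$ with $\frakq$ a $(t_1,\dots,t_k)$-primary ideal: letting $B=A[x_\rho\mid\rho\in\tilde\Sigma(1)]$ be the total coordinate ring of $\toric{A}{\tilde\Sigma}$, the trinomials reduce modulo $\frakm_A$ to the binomials \eqref{eq:binomials_theorem_deformations_projective_varieties}, which form a $(B\otimes_A\CC)$-regular sequence by part~(A), so Lemma~\ref{lemma:complete_intersection_flatness_neighbourhood} makes $B/(\text{trinomials})$ flat over $A$ and Lemma~\ref{lemma:flatness_sheafification_graded_module} makes its sheafification, namely $\cO_{\cX\times_{\AA^k_\CC}\Spec A}$, flat over $\Spec A$. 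This yields a formal deformation of $X$ over $\CC[\![t_1,\dots,t_k]\!]$; because $\cX\to\AA^k_\CC$ is proper, Lemma~\ref{lemma:flatness_from_formal_to_local_proper} upgrades this to genuine flatness over a Zariski open neighbourhood of the origin.

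The main obstacle is the bookkeeping in part~(A): keeping the four lattices $N,N_0,\tilde N,\tilde N_0$ and their duals straight, verifying $e_0\in\interior(\tilde\tau)$ and $\langle\tilde w,e_0\rangle=0$, and — above all — correctly matching the binomial ideal $J_{\tilde X}$ written in the Cox coordinates of the \emph{projective} toric variety $\tilde X$ with the ideal of $C$ inside the \emph{affine} toric variety $\tilde C$, so that the hypotheses of Lemma~\ref{lemma:cox_coordinates_proj_toric_and_affine_cone} (in particular the $\NN$-homogeneity of $H$) hold. Once the affine picture of Theorem~\ref{thm:mavlyutov} has been transported through this dictionary, part~(B) is essentially a reprise of the proof of Theorem~\ref{thm:mavlyutov}(B); the only genuinely new ingredient is the passage from formal flatness to honest flatness over an open neighbourhood, supplied by Lemma~\ref{lemma:flatness_from_formal_to_local_proper}, for which properness of $\cX\to\AA^k_\CC$ is essential.
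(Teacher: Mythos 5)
Your proposal is correct and follows the paper's strategy almost step for step: verify $e_0\in\interior(\tilde{\tau})$, apply Theorem~\ref{thm:mavlyutov} to the $\partial$-less datum for $(N_0,\tau)$ on the affine cone $\tilde{C}$, transport the binomial/trinomial ideals through the homomorphism $S_{\tilde{X}}\to S_{\tilde{C}}$ of Lemma~\ref{lemma:cox_coordinates_proj_toric_and_affine_cone} (using $\langle e_i^*,\xi_\rho\rangle=b_\rho\langle e_i^*,\rho\rangle$ and $\langle\tilde{w},e_0\rangle=0$), and then get flatness from Lemmas~\ref{lemma:complete_intersection_flatness_neighbourhood}, \ref{lemma:flatness_sheafification_graded_module} and \ref{lemma:flatness_from_formal_to_local_proper}. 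One sub-step is genuinely different: for the claim that the binomials \eqref{eq:binomials_theorem_deformations_projective_varieties} form a regular sequence in $S_{\tilde{X}}$, the paper compares the matrices $(\langle e_i^*,\rho\rangle)$ and $(\langle e_i^*,\xi\rangle)$, which differ by positive column scalings, and invokes the Fischer--Shapiro criterion (Lemma~\ref{lemma:fischer_shapiro} or Remark~\ref{rmk:fischer_shapiro_new_proof}); you instead descend regularity along the faithfully flat inclusion $S_{\tilde{X}}\into S_{\tilde{C}}$ (free by Lemma~\ref{lemma:monomials_free_module}), using that the images are the regular sequence of Theorem~\ref{thm:mavlyutov}(A). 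This descent argument is valid (flatness preserves the relevant injectivities and faithfulness lets you descend them), and it also makes your nonnegativity check for the trinomial exponents -- dividing the affine exponents by $b_\rho$ -- explicit where the paper merely says "analogous and omitted".

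One small imprecision: Lemma~\ref{lemma:flatness_from_formal_to_local_proper} only upgrades formal flatness to flatness of $\cX\times_{\AA^k_\CC}\Spec\cO_{\AA^k_\CC,O}\to\Spec\cO_{\AA^k_\CC,O}$, which gives the deformation over $\CC[\![t_1,\dots,t_k]\!]$; to get a deformation over an actual Zariski open neighbourhood of the origin, as the statement asserts, you still need the spreading-out step, i.e.\ Lemma~\ref{lemma:flatness_neighbourhood} (openness of the flat locus), which your write-up never invokes. This is a citation slip rather than a mathematical gap, since the required lemma is available in the paper and plugs in immediately after your last step.
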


The rest of this section is devoted to the proof of Theorem \ref{thm:deformations_projective_toric_varieties}. 

\begin{proof}[Proof of Theorem \ref{thm:deformations_projective_toric_varieties}(A)]
By Lemma~\ref{lemma:sigma_tilde_is_full_dimensional} $\tilde{\tau}$ is a $(n+1+k)$-dimensional strongly convex rational polyhedral cone in $\tilde{N}_0$. It is clear that $e_0 \in \tilde{\tau}$.

Now we show that $e_0$ is in the interior of $\tilde{\tau}$: it is enough to show that, if $\tilde{u} = u + \sum_{i=0}^k h_i e_i^* \in \tilde{\tau}^\vee \cap \tilde{M}_0$ and $h_0 = 0$, then $\tilde{u} = 0$. Since $\tau \subseteq \tilde{\tau}$, we have that $u$ is non-negative on $\tau$; but $e_0$ is in the interior of $\tau$, so $u=0$. By evaluating $\tilde{u} = \sum_{i=1}^k h_i e_i^*$ on $Q_0 -e_1-\cdots-e_k$, $Q_1 + e_1$, \dots, $Q_k + e_k$, we see $h_1 = \cdots = h_k = 0$. This proves that $e_0$ lies in the interior of $\tilde{\tau}$.

Thanks to Lemma~\ref{lemma:polarised_toric_proj} we have $\tilde{X} = \Proj \CC[ \tilde{\tau}^\vee \cap \tilde{M}_0]$ and $X = \Proj \CC[ \tau^\vee \cap M_0]$. The ring homomorphism
\begin{equation} \label{eq:ring_surjection_CC[tau_vee]}
\CC [\tilde{\tau}^\vee \cap \tilde{M}_0] \longrightarrow \CC[\tau^\vee \cap M_0],
\end{equation}
which is induced by the inclusion $\tau \into \tilde{\tau}$ and is surjective by the proof of Theorem~\ref{thm:mavlyutov}, is homogeneous with respect to the $\NN$-grading and induces a closed embedding $\iota \colon X \into \tilde{X}$. 
Using the isomorphisms \eqref{eq:anello_affine_sigma_isom_localizzazione _omogenea} it is not difficult to write down the formulae for the actions of the tori $T_N$ and $T_{\tilde{N}}$ on the affine charts of $X$ and $\tilde{X}$, respectively. From these formulae it is possible to see that $\iota$ is a toric morphism.

We have to prove that $X$ coincides with the closed subscheme of $\tilde{X}$ defined by the ideal $J_{\tilde{X}} \subseteq S_{\tilde{X}}$ generated by the binomials \eqref{eq:binomials_theorem_deformations_projective_varieties}. Let $J_{\tilde{C}} = J_{\tilde{X}} S_{\tilde{C}}$ be the extension of $J_{\tilde{X}}$ to the total coordinate ring $S_{\tilde{C}}$ of the affine cone $\tilde{C} = \Spec \CC[\tilde{\tau}^\vee \cap \tilde{M}_0]$ via the ring homomorphism $S_{\tilde{X}} \to S_{\tilde{C}}$ defined in Lemma~\ref{lemma:cox_coordinates_proj_toric_and_affine_cone}.
The ideal $J_{\tilde{C}}$ is generated generated by the binomials
\begin{equation} \label{eq:binomials_C_tilde_in_projective_varieties}
\prod_{\substack{\xi \in \tilde{\tau}(1) \colon \\ \langle e_i^*, \xi \rangle > 0}} x_\xi^{\langle e_i^*, \xi \rangle} - 
\prod_{\substack{\xi \in \tilde{\tau}(1) \colon \\ \langle e_i^*, \xi \rangle < 0}} x_\xi^{-\langle e_i^*, \xi \rangle} \qquad \text{for } i = 1, \dots, k
\end{equation}
in the Cox coordinates of $\tilde{C}$. By Theorem~\ref{thm:mavlyutov} the part of degree zero of $J_{\tilde{C}}$ in the ring $(S_{\tilde{C}})_0 \simeq \CC[\tilde{\tau}^\vee \cap \tilde{M}_0]$ coincides with the kernel $H$ of the ring surjection \eqref{eq:ring_surjection_CC[tau_vee]}. By Lemma~\ref{lemma:cox_coordinates_proj_toric_and_affine_cone}, $X = \Proj \CC[\tilde{\tau}^\vee \cap \tilde{M}_0]/ H$ coincides with the closed subscheme of $\tilde{X}$ defined by the ideal $J_{\tilde{X}}$.

The matrices $\cM_{\tilde{\Sigma}} = (\langle e_i^*, \rho \rangle)_{1 \leq i \leq k, \rho \in \tilde{\Sigma}(1)}$ and $\cM_{\tilde{\tau}} = (\langle e_i^*, \xi \rangle)_{1 \leq i \leq k, \xi \in \tilde{\tau}(1)}$ differ just by multiplication by a positive integer on each column, namely the numbers $b_\rho$ defined in Lemma~\ref{lemma:cox_coordinates_proj_toric_and_affine_cone}. From the proof of Lemma~\ref{lemma:cox_equations_slicing_cone} we see that $\cM_{\tilde{\tau}}$ has rank $k$ and each of its columns has at most one positive entry. Therefore also the matrix $\cM_{\tilde{\Sigma}}$ has these two properties. By Lemma~\ref{lemma:fischer_shapiro} or Remark~\ref{rmk:fischer_shapiro_new_proof}, the binomials \eqref{eq:binomials_theorem_deformations_projective_varieties} form a regular sequence.

This concludes the proof of Theorem~\ref{thm:deformations_projective_toric_varieties}(A).
\end{proof}

The following two lemmata should be well known, but we have not been able to find an adequate reference for them. 

\begin{lemma} \label{lemma:flatness_from_formal_to_local_proper}
Let $(A, \frakm)$ be a noetherian local ring and let $\pi \colon Y \to \Spec A$ be a proper morphism of schemes such that $Y \times_{\Spec A} \Spec A/\frakm^n \to \Spec A/\frakm^n$ is flat for every $n \in \NN$. Then $\pi$ is flat.
\end{lemma}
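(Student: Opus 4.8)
The plan is to reduce flatness of $Y \to \Spec A$ to flatness at the points of the special fibre, and then to run the local criterion of flatness stalk by stalk. First I would observe that, since $A$ is noetherian, $Y$ is noetherian and $Y \to \Spec A$ is of finite presentation; hence the locus $U \subseteq Y$ where $Y \to \Spec A$ is flat is open (this is Grothendieck's openness of the flat locus for morphisms of finite presentation, applied to the coherent sheaf $\cO_Y$). Let $Z := Y \setminus U$, a closed subset. Because $Y \to \Spec A$ is proper, the image of $Z$ in $\Spec A$ is closed; if it were non-empty it would contain the unique closed point $\frakm$ of $\Spec A$ (every proper ideal of $A$ is contained in $\frakm$, so every non-empty closed subset of $\Spec A$ contains $\frakm$), and then $Z$ would meet the special fibre $Y_0 := Y \times_{\Spec A} \Spec A/\frakm$. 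So it suffices to prove that $Y \to \Spec A$ is flat at every point of $Y_0$.

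Next I would fix $y \in Y_0$ and set $B := \cO_{Y,y}$, a noetherian local ring. Since $y$ maps to $\frakm$, the structure map makes $A \to B$ a \emph{local} homomorphism, so $\frakm B$ is contained in the maximal ideal of $B$. For each $n \in \NN$ the closed subscheme $Y \times_{\Spec A} \Spec A/\frakm^n$ of $Y$ has structure sheaf $\cO_Y/\frakm^n \cO_Y$, hence stalk $B/\frakm^n B$ at $y$; by hypothesis this subscheme is flat over $\Spec A/\frakm^n$, and since $A/\frakm^n$ is local (in fact artinian) this says precisely that $B/\frakm^n B$ is flat over $A/\frakm^n$ for every $n$. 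Applying the local criterion of flatness (e.g.\ \cite[Theorem~22.3]{matsumura}) to the local homomorphism $A \to B$ with $I = \frakm$ and $M = B$ finitely generated over $B$ then gives that $B$ is flat over $A$. As this holds for every $y \in Y_0$, the morphism $Y \to \Spec A$ is flat along $Y_0$, hence flat by the first paragraph.

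The only points requiring care are the two external inputs: the openness of the flat locus (which needs finite presentation, automatic over the noetherian base $A$), and the ``$M/I^nM$ flat over $A/I^n$ for all $n$ $\Rightarrow$ $M$ flat'' form of the local criterion, which crucially requires $\frakm B$ to lie in the Jacobson radical of $B$ — this is exactly why one must work at a point of the special fibre, where $A \to B$ is local. Properness is used precisely once, to guarantee that a non-empty non-flat locus would already be visible on $Y_0$; without it the statement is false, so I expect that to be the conceptual heart of the argument, the rest being routine bookkeeping with stalks and base change.
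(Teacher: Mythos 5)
Your proposal is correct and follows essentially the same route as the paper's proof: openness of the flat locus, properness to force a non-empty non-flat locus to meet the special fibre, and then Matsumura's local flatness criterion (Theorem~22.3) applied to the local homomorphism $A \to \cO_{Y,y}$ at a point over $\frakm$. The only cosmetic difference is that you argue directly that flatness along $Y_0$ suffices, whereas the paper phrases the same argument as a proof by contradiction.
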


\begin{proof} This proof relies on an argument that appears in the proof of \cite[Proposition~6.51]{talpo_vistoli_deformation}.
We want to show that the set
$
Z = \{ y \in Y \mid \cO_{Y,y} \text{ is not flat over } A \}
$
is empty.
By covering $Y$ with open affine subschemes and by using \cite[Theorem 24.3]{matsumura}, one can see that $Z$ is closed in $Y$.

Assume by contradiction that $Z$ is non-empty. Since $\pi$ is closed, the set $\pi(Z)$ is a closed non-empty subset of $\Spec A$. Therefore $\frakm \in \pi(Z)$. Hence there exists $y_0 \in Z$ such that $\pi(y_0) = \frakm$. Let $\Spec R$ be an affine open neighbourhood of $y_0$ in $Y$ and let $B = \cO_{Y, y_0}$ be the local ring of $Y$ at $y_0$. We know that $A / \frakm^n \to R / \frakm^n R$ is flat for every $n \in \NN$. Therefore the local homomorphism $A \to B$ is such that $A / \frakm^n \to B / \frakm^n B$ is flat for every $n \in \NN$. By the local flatness criterion  \cite[Theorem~22.3]{matsumura} $A \to B$ is flat. But this is absurd because $y_0 \in Z$.
\end{proof}

\begin{lemma} \label{lemma:flatness_neighbourhood}
Let $S$ be a noetherian scheme and let $Y \to S$ be a scheme morphism of finite type such that $Y \times_S \Spec \cO_{S,s} \to \Spec \cO_{S,s}$ is flat for some point $s \in S$.
Then there exists an open neighbourhood $U$ of $s$ in $S$ such that $Y \times_S U \to U$ is flat.
\end{lemma}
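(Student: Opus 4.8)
The plan is to imitate the proof of Lemma~\ref{lemma:flatness_from_formal_to_local_proper}: consider the locus $Z \subseteq Y$ on which the structure morphism $f \colon Y \to S$ fails to be flat, show that $Z$ is closed, and then produce an open neighbourhood $U$ of $s$ in $S$ that is disjoint from $f(Z)$. The difference from Lemma~\ref{lemma:flatness_from_formal_to_local_proper} is that $f$ is no longer assumed to be proper, so I cannot conclude that $f(Z)$ is closed; instead I will use noetherianness of $Y$ to control the closure of $f(Z)$.

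First I would set
\[
Z = \{ y \in Y \mid \cO_{Y,y} \text{ is not flat over } \cO_{S,f(y)} \}.
\]
Since $S$ is noetherian and $f$ is of finite type, $Y$ is noetherian and $f$ is of finite presentation; covering $Y$ by affine open subschemes lying over affine open subschemes of $S$ and invoking the openness of the flat locus \cite[Theorem~24.3]{matsumura} shows that $Z$ is closed in $Y$. A minor point to check here is that, for a finitely generated algebra $B$ over a noetherian ring $A$ and a prime $\frakq$ of $B$, flatness of $B_\frakq$ over $A$ is equivalent to flatness of $B_\frakq$ over $A_{\frakq \cap A}$ (by the usual descent of flatness along the localization $A \to A_{\frakq \cap A}$); this reconciles the two possible meanings of the non-flat locus on affine charts.

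Next I would show that $Z$ meets no fibre of $f$ over a generization of $s$ --- in particular, not the fibre over $s$ itself. Given $y \in Y$ such that $s' := f(y)$ lies in the image of the canonical morphism $\Spec \cO_{S,s} \to S$, there is a unique point $y'$ of $Y \times_S \Spec \cO_{S,s}$ over $y$, and a routine computation on affine charts identifies $\cO_{Y \times_S \Spec \cO_{S,s}, y'}$ with $\cO_{Y,y}$ as $\cO_{S,s}$-algebras. By hypothesis $Y \times_S \Spec \cO_{S,s} \to \Spec \cO_{S,s}$ is flat, so $\cO_{Y,y}$ is flat over $\cO_{S,s}$; since $\cO_{S,s} \to \cO_{S,s'} = \cO_{S,f(y)}$ is a localization through which the structure map of $\cO_{Y,y}$ factors, flatness over $\cO_{S,s}$ descends to flatness over $\cO_{S,f(y)}$, whence $y \notin Z$.

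Finally, since $Z$ is closed in the noetherian space $Y$, it has finitely many irreducible components, with generic points $\zeta_1, \dots, \zeta_m$ say, so that $\overline{f(Z)} = \bigcup_{i=1}^m \overline{\{ f(\zeta_i) \}}$. By the previous step no $f(\zeta_i)$ is a generization of $s$, hence $s \notin \overline{\{ f(\zeta_i) \}}$ for every $i$, and therefore $s \notin \overline{f(Z)}$. I would then take $U := S \setminus \overline{f(Z)}$, an open neighbourhood of $s$: then $f^{-1}(U) \cap Z = \varnothing$, so every point of $f^{-1}(U) = Y \times_S U$ has local ring flat over the corresponding local ring of $U$, i.e. $Y \times_S U \to U$ is flat. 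The only genuinely substantive input is the openness of the flat locus; the rest is bookkeeping with generizations, and the one thing to be careful about is the translation of the hypothesis through the identification of stalks on $Y \times_S \Spec \cO_{S,s}$.
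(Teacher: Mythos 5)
Your proof is correct, but it takes a genuinely different route from the paper. The paper reduces at once to the affine situation $S = \Spec A$, $Y = \Spec B$, $s = \frakm$, observes (using openness of the flat locus, \cite[Theorem~24.3]{matsumura}, and the same localization trick you use) that $\Spec(B \otimes_A A_\frakm)$ is contained in the open flat locus $V \subseteq \Spec B$, and then invokes the projective-limit machinery of \cite[\S{8}]{ega4_3}: since $\Spec(B \otimes_A A_\frakm)$ is the limit of the principal opens $\Spec B_f$ over $f \in A \setminus \frakm$, Corollaire~8.3.5 produces an $f_0$ with $\Spec B_{f_0} \subseteq V$, and $U = \Spec A_{f_0}$ works. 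You instead argue globally with the non-flat locus $Z$: after the same two local verifications (that $Z$ is closed, and that no point lying over a generization of $s$ belongs to $Z$ — your identification of stalks on $Y \times_S \Spec \cO_{S,s}$ and the passage from flatness over $\cO_{S,s}$ to flatness over its localization $\cO_{S,f(y)}$ are exactly the paper's use of \cite[Theorem~7.1]{matsumura}), you exploit noetherianness of $Y$ to write $Z$ as a finite union of irreducible closed sets with generic points $\zeta_i$, note that no $f(\zeta_i)$ generizes $s$, hence $s \notin \overline{f(Z)} = \bigcup_i \overline{\{f(\zeta_i)\}}$, and take $U = S \setminus \overline{f(Z)}$. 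Your version is more elementary and self-contained, needing only the openness of the flat locus plus point-set topology of noetherian spaces; the paper's limit argument is heavier but produces a principal affine neighbourhood and is the form of the statement that survives outside the noetherian setting (finite presentation plus EGA~IV.8 spreading out). Both arguments rest on the same essential input, \cite[Theorem~24.3]{matsumura}.
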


\begin{proof}
Since the problem is local and $Y \to S$ is quasi-compact, we may assume $S = \Spec A$, $Y = \Spec B$ and $s = \frakm$ for some noetherian ring $A$, some finitely generated $A$-algebra $B$ and some prime ideal $\frakm$ of $A$. We know that $B \otimes_A A_\frakm$ is flat over $A_\frakm$.
Let us consider the set
\begin{align*}
V = \{ P \in \Spec B \mid B_P \text{ is flat over } A_{P \cap A} \}
= \{ P \in \Spec B \mid B_P \text{ is flat over } A \},
\end{align*}
which is open in $\Spec B$ by \cite[Theorem 24.3]{matsumura}. The equality above holds by transitivity of flatness and \cite[Theorem 7.1]{matsumura}.

We identify $\Spec (B \otimes_A A_\frakm)$ with the set of primes $P \in \Spec B$ such that $P \cap A \subseteq \frakm$.
If $P \in \Spec B$ is such that $P \cap A \subseteq \frakm$, then by \cite[Theorem 7.1]{matsumura} from the flatness of $B \otimes_A A_\frakm$ over $A_\frakm$ we deduce that $B_P$ is flat over $(A_\frakm)_{(P \cap A)A_\frakm} = A_{P \cap A}$. This shows that $\Spec (B \otimes_A A_\frakm)$ is contained in $V$.

Consider the set $A \setminus \frakm$ endowed with the order relation $\leq$ such that $f \leq g$ if and only if $g \in \sqrt{Af}$. If $f \leq g$, there is the localisation map $A_f \to A_g$, given by the restriction of the structure sheaf of $\Spec A$ from the principal open subset defined by $f$ to the principal open subset defined by $g$.
As $f$ runs in $A \setminus \frakm$, the rings $A_f$  form a direct system and the local ring $A_\frakm$ is the direct limit of this system. Since tensor products and direct limits commute, $B \otimes_A A_\frakm$ is the limit of $B_f$ as $f \in A \setminus \frakm$. We are in the situation of inverse limits of affine schemes studied in \cite[\S{8}]{ega4_3}, i.e.\ $\Spec (B \otimes_A A_\frakm)$ is the projective limit of the affine schemes $\Spec B_f$ as $f$ runs in $A \setminus \frakm$.

For every $f \in A \setminus \frakm$, consider the set $E_f = V \cap \Spec B_f$, which is open in $\Spec B_f$ because $V$ is open in $\Spec B$. Since $\Spec (B \otimes_A A_\frakm)$ is contained in $V$, the set $E = V \cap \Spec (B \otimes_A A_\frakm)$ coincides with $\Spec (B \otimes_A A_\frakm)$. Since $E$ is the limit of the $E_f$'s, by \cite[Corollaire 8.3.5]{ega4_3} we have that there exists $f_0 \in A \setminus \frakm$ such that $E_{f_0} = \Spec B_{f_0}$. This implies that $B_{f_0}$ is flat over $A_{f_0}$. Therefore we may take $U = \Spec A_{f_0}$.
\end{proof}

\begin{proof}[Proof of Theorem \ref{thm:deformations_projective_toric_varieties}(B)]
The proof of the fact that the trinomials~\eqref{eq:trinomials_theorem_deformations_projective_varieties} are elements of $\CC[t_1, \dots, t_k][x_\rho \mid \rho \in \tilde{\Sigma}(1)]$ is completely analogous to what is done in the proof of Theorem~\ref{thm:mavlyutov}(B) and will be omitted.

Let $\cX$ be the closed subscheme of $\tilde{X} \times_{\Spec \CC} \AA^k_\CC$ defined by the homogeneous ideal generated by the trinomials~\eqref{eq:trinomials_theorem_deformations_projective_varieties}. 
By (A) the fibre of $\cX \to \AA^k_\CC$ over the origin is $X$.
The fibred product $\cX \times_{\AA^k_\CC} \Spec \CC[t_1, \dots, t_k] / \frakq$ is flat over $\CC[t_1, \dots, t_k] / \frakq$ for every $(t_1, \dots, t_k)$-primary ideal $\frakq$ of $\CC[t_1, \dots, t_k]$, thanks to Lemma~\ref{lemma:complete_intersection_flatness_neighbourhood} and Lemma~\ref{lemma:flatness_sheafification_graded_module}, as in the proof of Theorem~\ref{thm:mavlyutov}(B). If $A = \CC[t_1, \dots, t_k]_{(t_1, \dots, t_k)}$ is the local ring of $\AA^k_\CC$ at the origin $O$, by Lemma~\ref{lemma:flatness_from_formal_to_local_proper} the morphism $\cX \times_{\AA^k_\CC} \Spec A \to \Spec A$ is flat, and consequently it induces a deformation of $X$ over $\hat{A} = \CC [ \! [ t_1, \dots, t_k ] \! ]$. By Lemma~\ref{lemma:flatness_neighbourhood} we may find an open neighbourhood $U \subseteq \AA^k_\CC$ of $O$ such that $\cX \times_{\AA^k_\CC} U$ is flat over $U$.
\end{proof}

\section{Deformations of projective toric pairs}
\label{sec:deformations_projective_toric_pairs}

In Theorem~\ref{thm:deformations_projective_toric_varieties}, from a projective toric variety $X$ with an ample $\QQ$-Cartier $\QQ$-divisor $D$ and a Minkowski decomposition of a certain polyhedron with some properties we constructed a deformation of $X$. Here we show that if $D$ is a $\ZZ$-divisor then we can construct a deformation of the toric pair $(X,\partial X)$. This is the content of the following theorem.

\begin{theorem}\label{thm:deformations_projective_toric_pairs}
Let $N$ be a lattice of rank $n$, let $X$ be a projective $T_N$-toric variety, and let $D$ be an ample torus-invariant $\QQ$-Cartier $\ZZ$-divisor on $X$. Let $\tau$ be the $(n+1)$-dimensional cone in the lattice $N_0 = N \oplus \ZZ e_0$ associated to the polarised projective toric variety $(X,D)$ as in Lemma~\ref{lemma:polarised_projective_varieties}. Let $(Q, Q_0, Q_1, \dots, Q_k, w)$ be a $\partial$-deformation datum for $(N_0, \tau)$ with $w \in M \subseteq M_0$.
Consider the lattices $\tilde{N} = N \oplus \ZZ e_1 \oplus \cdots \oplus \ZZ e_k$ and $\tilde{N}_0 = N \oplus \ZZ e_0 \oplus \ZZ e_1 \oplus \cdots \oplus \ZZ e_k$. Let $\tilde{\tau} \subseteq (\tilde{N}_0)_\RR$ and $\tilde{w} \in \tilde{M} \subseteq \tilde{M}_0$ be as in Notation~\ref{not:tilde_N_tilde_sigma_tilde_w}. 

Let $\partial X$ be the toric boundary of $X$. Let $(\tilde{X}, \tilde{D})$ be the polarised projective toric variety associated to the cone $\tilde{\tau}$ via Lemma \ref{lemma:polarised_projective_varieties}.
Consider the reduced effective divisor $\cD$ on $\tilde{X}$ defined by the homogeneous ideal generated by the following monomial in the Cox coordinates of $\tilde{X}$:
\begin{equation} \label{eq:monomial_theorem_deformations_projective_pairs}
\prod_{\substack{\rho \in \tilde{\Sigma}(1) \colon \\ \forall i \in \{ 1, \dots, k \}, \langle e_i^*, \rho \rangle \leq 0}} x_\rho,
\end{equation}
where $\tilde{\Sigma}$ is the fan of $\tilde{X}$ in $\tilde{N}$.

Let $t_1, \dots, t_k$ be the standard coordinates on $\AA^k_\CC$. Consider the closed subscheme $\cX$ of $\tilde{X} \times_{\Spec \CC} \AA^k_\CC = \toric{\CC[t_1, \dots, t_k]}{\tilde{\Sigma}}$ defined by the homogeneous ideal generated by the following trinomials in Cox coordinates:
\begin{equation} \label{eq:trinomials_theorem_deformations_projective_pairs}
\prod_{\substack{\rho \in \tilde{\Sigma}(1) \colon \\ \langle e_i^*, \rho \rangle > 0}} x_\rho^{\langle e_i^*, \rho \rangle} - 
\prod_{\substack{\rho \in \tilde{\Sigma}(1) \colon \\ \langle e_i^*, \rho \rangle < 0}} x_\rho^{-\langle e_i^*, \rho \rangle} -
t_i \prod_{\rho \in \tilde{\Sigma}(1)} x_\rho^{\langle \tilde{w}, \rho \rangle} \prod_{\substack{\rho \in \tilde{\Sigma}(1) \colon \\ \langle e_i^*, \rho \rangle < 0}} x_\rho^{-\langle e_i^*, \rho \rangle}
\end{equation}
for $i=1, \dots, k$. 

Then the diagram
\[
\xymatrix{
\cX \cap (\cD \times_{\Spec \CC} \AA^k_\CC) \ar[rd] \ar@{^{(}->}[r] & \cX \ar[d] \\
& \AA^k_\CC
}
\]
induces a deformation of the pair $(X, \partial X)$ over $\CC [ \![ t_1, \dots, t_k ] \! ]$ and over some open neighbourhood of the origin in $\AA^k_\CC$.
\end{theorem}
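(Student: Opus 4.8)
The plan is to deduce this projective statement from the affine case for pairs, Theorem~\ref{thm:deformations_affine_toric_pairs} applied to $(N_0,\tau)$, by passing to $\Proj$ via the dictionary of \S\ref{sec:polarised_projective_toric_varieties}, exactly as Theorem~\ref{thm:deformations_projective_toric_varieties} was deduced from Theorem~\ref{thm:mavlyutov}. First I would invoke Theorem~\ref{thm:deformations_projective_toric_varieties}: the morphism $\cX\to\AA^k_\CC$ already induces a deformation of $X$ over $\CC[\![t_1,\dots,t_k]\!]$ and over an open neighbourhood of the origin, the binomials \eqref{eq:binomials_theorem_deformations_projective_varieties} cut out $X$ inside $\tilde X$, and the fibre of $\cX\to\AA^k_\CC$ over the origin is $X$. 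Since $\cX\cap(\cD\times_{\Spec\CC}\AA^k_\CC)\into\cX$ is a closed embedding (both being closed in $\tilde X\times_{\Spec\CC}\AA^k_\CC$), it then suffices to prove: (a) the scheme-theoretic equality $X\cap\cD=\partial X$ inside $\tilde X$; and (b) that $\cX\cap(\cD\times_{\Spec\CC}\AA^k_\CC)$ is flat over $\CC[\![t_1,\dots,t_k]\!]$ and over an open neighbourhood of the origin in $\AA^k_\CC$. Indeed, (a) identifies the restriction over the origin of the closed embedding above with $\partial X\into X$, and together with (b) and Theorem~\ref{thm:deformations_projective_toric_varieties} this is precisely a deformation of the pair $(X,\partial X)$.

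For (a) I would argue as in the proof of Theorem~\ref{thm:deformations_projective_toric_varieties}(A). Write $C=\Spec\CC[\tau^\vee\cap M_0]$ and $\tilde C=\Spec\CC[\tilde\tau^\vee\cap\tilde M_0]$ for the affine cones over $X$ and $\tilde X$. By Lemma~\ref{lemma:polarised_toric_proj} we have $X=\Proj\CC[\tau^\vee\cap M_0]$, $\tilde X=\Proj\CC[\tilde\tau^\vee\cap\tilde M_0]$, and $\partial X=\Proj\CC[\tilde\tau^\vee\cap\tilde M_0]/\overline L$, where $\overline L$ is the ideal of the toric boundary $\partial C$ of $C$ viewed inside $\CC[\tilde\tau^\vee\cap\tilde M_0]$ via the surjection $\CC[\tilde\tau^\vee\cap\tilde M_0]\onto\CC[\tau^\vee\cap M_0]$; note $\overline L$ is $\NN$-homogeneous, being the preimage of the $\NN$-homogeneous ideal $L$. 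Now $X\cap\cD$ is the closed subscheme of $\tilde X$ defined by the $G_{\tilde\Sigma}$-homogeneous ideal $J_{\tilde X}\subseteq S_{\tilde X}$ generated by the binomials \eqref{eq:binomials_theorem_deformations_projective_varieties} together with the monomial \eqref{eq:monomial_theorem_deformations_projective_pairs}. Pushing $J_{\tilde X}$ along the ring homomorphism $S_{\tilde X}\to S_{\tilde C}$, $x_\rho\mapsto x_{\xi_\rho}^{b_\rho}$, of Lemma~\ref{lemma:cox_coordinates_proj_toric_and_affine_cone}, the binomials \eqref{eq:binomials_theorem_deformations_projective_varieties} become the binomials \eqref{eq:binomials_C_tilde_in_projective_varieties} in the Cox coordinates of $\tilde C$ (because $\langle e_i^*,\xi_\rho\rangle=b_\rho\langle e_i^*,\rho\rangle$, as in the proof of Theorem~\ref{thm:deformations_projective_toric_varieties}(A)), while the monomial \eqref{eq:monomial_theorem_deformations_projective_pairs} becomes $\prod_{\xi\in\tilde\tau(1):\,\langle e_i^*,\xi\rangle\le0\ \forall i}x_\xi^{b_\xi}$. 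Using that $D$ is a $\ZZ$-divisor, $b_\rho=1$ on every ray of $\tilde\Sigma$ except those coming from $Q_0-e_1-\cdots-e_k$, so this monomial differs from the reduced monomial \eqref{eq:monomial_theorem_deformations_affine_pairs} of the datum on $(N_0,\tau)$ only in the exponents of the Cox coordinates attached to $Q_0$. Re-running the computation of degree-zero parts from the proof of Theorem~\ref{thm:deformations_affine_toric_pairs} (the two containments $\subseteq$ and $\supseteq$ of the analogue of \eqref{eq:equality_cox_ideals_toric_boundary}) with this altered monomial, one checks that the degree-zero part of $J_{\tilde X}S_{\tilde C}$ still coincides with $\mathrm{Cox}_{\tilde\tau}(\overline L)$; this degree-zero part is therefore $\NN$-homogeneous, so Lemma~\ref{lemma:cox_coordinates_proj_toric_and_affine_cone} yields $X\cap\cD=\Proj\CC[\tilde\tau^\vee\cap\tilde M_0]/\overline L=\partial X$.

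For (b) I would observe that setting $t_i=0$ in the trinomials \eqref{eq:trinomials_theorem_deformations_projective_pairs} produces the binomials \eqref{eq:binomials_theorem_deformations_projective_varieties}, each of the form $y_i-z_0$ with $y_i$ a reduced monomial (here hypothesis (iv) is used, as in the proof of Theorem~\ref{thm:deformations_affine_toric_pairs}), and that the monomial \eqref{eq:monomial_theorem_deformations_projective_pairs} is the corresponding reduced monomial; hence by Lemma~\ref{lemma:regular_sequence_binomials_monomial} the trinomials restricted to $t=0$ together with the monomial form a regular sequence in $S_{\tilde X}$. Then, for every $(t_1,\dots,t_k)$-primary ideal $\frakq$ of $\CC[t_1,\dots,t_k]$, Lemma~\ref{lemma:complete_intersection_flatness_neighbourhood} and Lemma~\ref{lemma:flatness_sheafification_graded_module} show that $\cX\cap(\cD\times_{\Spec\CC}\AA^k_\CC)$, base-changed to $\Spec\CC[t_1,\dots,t_k]/\frakq$, is flat over it. Since $\tilde X$ is complete, this closed subscheme is proper over $\AA^k_\CC$, so Lemma~\ref{lemma:flatness_from_formal_to_local_proper} upgrades this to flatness over $\cO_{\AA^k_\CC,O}$ (hence a deformation over $\CC[\![t_1,\dots,t_k]\!]$), and Lemma~\ref{lemma:flatness_neighbourhood} to flatness over some open neighbourhood of the origin, exactly as in the proof of Theorem~\ref{thm:deformations_projective_toric_varieties}(B).

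I expect the heart of the argument to be step (a), and specifically the verification that the degree-zero part of $J_{\tilde X}S_{\tilde C}$ is unaffected when the reduced monomial \eqref{eq:monomial_theorem_deformations_affine_pairs} is replaced by $\prod_\xi x_\xi^{b_\xi}$: one has to see that, modulo the binomial ideal, the extra powers of the $Q_0$-Cox-coordinates can be absorbed. This is where the hypothesis that $D$ be a $\QQ$-Cartier $\ZZ$-divisor is essential, since it forces $b_\rho=1$ on the remaining rays and makes the bookkeeping of the minima over $Q_0$ compatible with integrality. The remaining parts of the proof should be a routine adaptation of the proofs of Theorem~\ref{thm:deformations_affine_toric_pairs} and Theorem~\ref{thm:deformations_projective_toric_varieties}, and the compatibility of the family of pairs with the family $\cX\to\AA^k_\CC$ is immediate from the fact that $\cX\cap(\cD\times_{\Spec\CC}\AA^k_\CC)\into\cX$ is a closed embedding of schemes both flat over the base.
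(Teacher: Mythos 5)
Your overall architecture is the paper's: reduce to the statement about the boundary via Theorem~\ref{thm:deformations_projective_toric_varieties}, transfer the Cox-coordinate ideal to the affine cone $\tilde{C}$ via the homomorphism $S_{\tilde{X}} \to S_{\tilde{C}}$ of Lemma~\ref{lemma:cox_coordinates_proj_toric_and_affine_cone}, identify the degree-zero part of the extended ideal with the ideal $\tilde{L}$ of $\partial C$ by means of the affine result (Theorem~\ref{thm:deformations_affine_toric_pairs}), and then get flatness from Lemma~\ref{lemma:regular_sequence_binomials_monomial} combined with Lemmas~\ref{lemma:complete_intersection_flatness_neighbourhood}, \ref{lemma:flatness_sheafification_graded_module}, \ref{lemma:flatness_from_formal_to_local_proper} and \ref{lemma:flatness_neighbourhood}, exactly as in Theorem~\ref{thm:deformations_projective_toric_varieties}(B). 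Your part (b) is essentially the paper's argument and is fine.

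The gap is in your part (a), precisely at the point you yourself single out as the heart of the argument. You only record that $b_\rho = 1$ away from the rays coming from $Q_0 - e_1 - \cdots - e_k$, so the monomial \eqref{eq:monomial_theorem_deformations_projective_pairs} pushes forward to a possibly non-reduced monomial in $S_{\tilde{C}}$, and you then assert, without proof, that ``re-running'' the computation of the proof of Theorem~\ref{thm:deformations_affine_toric_pairs} with this altered monomial still produces $\tilde{L}$ as degree-zero part. That is not a routine adaptation: in the containment $\mathrm{Cox}(\overline{I}) \subseteq \overline{J}$ of \eqref{eq:equality_cox_ideals_toric_boundary}, in the case $\min_{Q_0 - e_1 - \cdots - e_k} \tilde{u} > 0$ the argument only yields $\langle \tilde{u}, \xi \rangle \geq 1$ on each $Q_0$-ray $\xi$ of $\tilde{\tau}$, whereas divisibility by your pushed-forward monomial would require $\langle \tilde{u}, \xi \rangle \geq b_\rho$, and nothing supplies this when $b_\rho > 1$; so the ``absorption of the extra powers modulo the binomial ideal'' is exactly the missing step, not a verification one can wave through. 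The paper avoids the problem by proving a stronger statement instead: since $D$ is a $\ZZ$-divisor, by Lemma~\ref{lemma:polarised_projective_varieties} every ray of $\tau$ has primitive generator $\rho - a e_0$ with $\rho \in N$ primitive and $a \in \ZZ$, and from Notation~\ref{not:tilde_N_tilde_sigma_tilde_w} the same holds for every ray of $\tilde{\tau}$ (with $\rho \in \tilde{N}$ primitive) -- including the rays over the vertices of $Q_0 - e_1 - \cdots - e_k$; hence all $b_\rho = 1$, the homomorphism $S_{\tilde{X}} \to S_{\tilde{C}}$ is the identity, the extended ideal is generated by the binomials \eqref{eq:binomials_C_tilde_in_projective_varieties} together with the reduced monomial \eqref{eq:monomial_theorem_deformations_affine_pairs}, and the degree-zero computation is literally Theorem~\ref{thm:deformations_affine_toric_pairs}, with no altered monomial to handle. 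To close your gap you should prove this assertion about the rays of $\tilde{\tau}$ (equivalently, that $\tilde{D}$ is again a $\ZZ$-divisor), rather than attempt the degree-zero computation with a non-reduced monomial.
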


We mean that the base change of the diagram above to the origin of $\AA^k_\CC$ is the closed embedding $\partial X \into X$ over $\Spec \CC$ and that we get flat families when we base change the morphisms $\cX \cap (\cD \times_{\Spec \CC} \AA^k_\CC) \to \AA^k_\CC$ and $\cX \to \AA^k_\CC$ to $\Spec \CC [ \! [ t_1, \dots, t_k ] \! ]$ and to some open neighbourhood of the origin in $\AA^k_\CC$.

\begin{proof}[Proof of Theorem~\ref{thm:deformations_projective_toric_pairs}]
By Theorem~\ref{thm:deformations_projective_toric_varieties} it is enough to deal with the toric boundary.
Let $C$ and $\tilde{C}$ be the affine cones over $X$ and $\tilde{X}$ as in the proof of Theorem~\ref{thm:deformations_projective_toric_varieties}.
Let $\overline{J}_{\tilde{X}}$ be the ideal in the Cox ring of $\tilde{X}$ generated by the binomials \eqref{eq:binomials_theorem_deformations_projective_varieties} and the monomial \eqref{eq:monomial_theorem_deformations_projective_pairs}. We need to show that the closed subscheme of $\tilde{X}$ defined by $\overline{J}_{\tilde{X}}$ coincides with $\partial X$.

Since $X$ is polarised by a $\ZZ$-divisor, by Lemma~\ref{lemma:polarised_projective_varieties} the primitive generator of every ray of the cone $\tau$ is of the form $\rho - a_\rho e_0$ for some $a_\rho \in \ZZ$ and $\rho \in N$ primitive. From the definition of $\tilde{\tau}$ in Notation~\ref{not:tilde_N_tilde_sigma_tilde_w} it is easy to see that also $\tilde{\tau}$ has the same property, i.e.\ the primitive generator of every ray of $\tilde{\tau}$ is of the form $\rho - a_\rho e_0$ for some $a_\rho \in \ZZ$ and $\rho \in \tilde{N}$ primitive. Therefore the homomorphism $S_{\tilde{X}} \to S_{\tilde{C}}$, defined in Lemma~\ref{lemma:cox_coordinates_proj_toric_and_affine_cone}, is the identity. In particular the extended ideal $\overline{J}_{\tilde{C}} := \overline{J}_{\tilde{X}} S_{\tilde{C}} \subseteq S_{\tilde{C}}$ is generated by the binomials \eqref{eq:binomials_C_tilde_in_projective_varieties} and the monomial
\begin{equation*} 
\prod_{\substack{\xi \in \tilde{\tau}(1) \colon \\ \forall i \in \{ 1, \dots, k \}, \langle e_i^*, \xi \rangle \leq 0}} x_\xi.
\end{equation*}
By Theorem~\ref{thm:deformations_affine_toric_pairs} the contraction of $\overline{J}_{\tilde{C}}$ to the degree zero part $(S_{\tilde{C}})_0 \simeq \CC[\tilde{\tau}^\vee \cap \tilde{M}_0]$ of $S_{\tilde{C}}$ coincides with the ideal $\tilde{L}$ of $\partial C$ in $\tilde{C}$, which is the preimage along the surjection \eqref{eq:ring_surjection_CC[tau_vee]} of the the ideal $L$ of $\partial C$ in $C$. By Lemma~\ref{lemma:cox_coordinates_proj_toric_and_affine_cone} the closed subscheme of $\tilde{X}$ defined by $\overline{J}_{\tilde{X}}$ coincides with
\[
\Proj \CC[\tilde{\tau}^\vee \cap \tilde{M}_0]/\tilde{L} = \Proj \CC[\tau^\vee \cap M_0]/L = \partial X,
\]
where the last equality follows from Lemma~\ref{lemma:polarised_toric_proj}.

\begin{equation*}
\xymatrix{
L \ar@{^{(}->}[d] &
 \tilde{L} \ar@{^{(}->}[d] \ar@{->>}[l] \ar@{=}[r] &
  \overline{J}_{\tilde{C}} \cap (S_{\tilde{C}})_0 \ar@{^{(}->}[d] \ar@{^{(}->}[r] &
   \overline{J}_{\tilde{C}} \ar@{^{(}->}[d] &
    \overline{J}_{\tilde{X}} \ar@{^{(}->}[d] \ar[l]_{\simeq} \\
\CC[\tau^\vee \cap M_0] &
 \CC[\tilde{\tau}^\vee \cap \tilde{M}_0] \ar@{->>}[l]_{\ \eqref{eq:ring_surjection_CC[tau_vee]}} \ar@{=}[r]^{\quad\ \mathrm{Cox}_{\tilde{\tau}}} &
  (S_{\tilde{C}})_0 \ar@{^{(}->}[r] &
   S_{\tilde{C}} &
    S_{\tilde{X}} \ar[l]_{\simeq}
}
\end{equation*}

One can show that the binomials \eqref{eq:binomials_theorem_deformations_projective_varieties} and the monomial \eqref{eq:monomial_theorem_deformations_projective_pairs} form a regular sequence by using Lemma~\ref{lemma:regular_sequence_binomials_monomial}. By adapting the proof Theorem~\ref{thm:deformations_projective_toric_varieties}(B) we can show the flatness of the families.
\end{proof}

\section{Mutations of Fano polytopes and deformations of Fano toric pairs} \label{sec:mutations_induce_deformations}

Here we recall the definition of mutations between Fano polytopes from \cite{sigma} and we prove Theorem~\ref{thm:mutations_induce_deformations}. The definition of Fano polytope has been given at the beginning of \S\ref{sec:intro_mutations_deformations}.

If $N$ is a lattice, $w \in M_\RR \setminus \{ 0 \}$ and $h \in \RR$, then we denote by $H_{w,h}$ the set of all points of $N_\RR$ lying at height $h$ with respect to $w$, i.e.\ the affine hyperplane $H_{w,h} := \{ v \in N_\RR \mid \langle w, v \rangle = h \}$.  In particular $w^\perp = H_{w,0}$.

\begin{definition} \label{def:factor}
Let $P \subseteq N_\RR$ be a Fano polytope. A \emph{mutation datum for $P$} is a pair $(w,F)$ where $w \in M$ is a primitive vector and $F \subseteq w^\perp \subseteq N_\RR$ is a lattice polytope satisfying the following condition:  for every $h \in \ZZ$ such that $\min_P  w \leq h < 0$, there exists a (possibly empty) lattice polytope $G_h \subseteq N_\RR$ such that
\begin{equation} \label{eq:conditions_factor}
H_{w,h} \cap \vertices(P) \subseteq G_h + (-h) F \subseteq \conv{ H_{w,h} \cap P \cap N}.
\end{equation}  
\end{definition}

Note that, for given Fano polytope $P \subseteq N_\RR$ and primitive vector $w \in M$, a polytope $F$ such that $(w,F)$ is a mutation datum for $P$ need not exist. From a mutation datum we make the following construction.

\begin{definition}[{\cite[Definition 5]{sigma}}] \label{def:mutation}
Let $P \subseteq N_\RR$ be a Fano polytope and let $(w,F)$ be a mutation datum for $P$. Assume that $\{ G_h \}_{\min_P w \leq h < 0}$ is a collection of lattice polytopes satisfying \eqref{eq:conditions_factor}. We define the corresponding \emph{mutation} to be the lattice polytope
\begin{equation*}
\mathrm{mut}_{w,F} (P) := \conv{ \bigcup_{h =  \min_P w}^{-1} G_h \cup \bigcup_{h=0}^{h_\mathrm{max}}  ((H_{w,h} \cap P \cap N) +hF) }.
\end{equation*}
\end{definition}

The polytope $\mathrm{mut}_{w,F}(P)$ does not depend on the choice of $\{ G_h \}$. Moreover, $\mathrm{mut}_{w,F}(P)$ is a Fano polytope. See \cite[\S{3}]{sigma} or \cite[\S{2.5}]{mohammad_thesis} for the proofs of these statements.

Roughly speaking, $\mut_{w,F}(P)$ is obtained from $P$ by adding $hF$ at height $h$ (with respect to $w$) for $h > 0$ and by removing $(-h) F$ at height $h$ for $h <0$.
The pair $(w,F)$ is a mutation datum precisely when it is possible to remove from $P$ multiples of $F$ at negative heights.
For an example of mutation of Fano polytopes see the beginning of Example~\ref{ex:P2_P114}.

The following theorem is the precise version of Theorem~\ref{thm:mutations_induce_deformations_easy}.

\begin{theorem} \label{thm:mutations_induce_deformations}
Let $P \subseteq N_\RR$ be a Fano polytope, let $(w, F)$ be a mutation datum for $P$, and let $P' = \mathrm{mut}_{w,F}(P)$ be the mutated polytope.
Let $X_P$ (resp.\ $X_{P'}$) be the Fano toric variety associated to the spanning fan of $P$ (resp.\ $P'$) and let $\partial X_P$ (resp.\ $\partial X_{P'}$) be the toric boundary of $X_P$ (resp.\ $X_{P'}$).
Set
\begin{align*}
\vertices(P)^{\geq 0} &= \vertices(P) \cap \{ v \in N \mid \langle w, v \rangle \geq 0 \}, \\
\vertices(P')^{< 0} &= \vertices(P') \cap \{ v \in N \mid \langle w, v \rangle < 0 \}.
\end{align*}
Consider the lattice $\tilde{N} = N \oplus \ZZ e_1$ and the polyhedron $\tilde{Q} \subseteq \tilde{M}_\RR$ defined by
\begin{equation*}
\tilde{Q} = \left\{ u + k e_1^* \in \tilde{M}_\RR \left\vert  \begin{matrix}
\ \forall p \in \vertices(P)^{\geq 0}, \quad \langle u, p \rangle + 1 \geq 0 \\
\ \forall p' \in \vertices(P')^{< 0}, \quad \langle u, p' \rangle + 1 + k \langle w, p' \rangle \geq 0 \\
\ \forall f \in \vertices(F), \quad \langle u, f \rangle + k \geq 0
\end{matrix}   \right.   \right\}.
\end{equation*}
Then $\tilde{Q}$ is a full dimensional rational polytope and the primitive generators of the rays of the normal fan $\tilde{\Sigma}$ of $\tilde{Q}$ are
\begin{itemize}
\item $p$ for $p \in \vertices(P)^{\geq 0}$,
\item $p' + \langle w , p' \rangle e_1$ for $p' \in \vertices(P')^{<0}$,
\item $f + e_1$ for $f \in \vertices(F)$.
\end{itemize}
Let $\tilde{X} = \toric{\CC}{\tilde{\Sigma}}$ be the toric variety associated to $\tilde{\Sigma}$. Consider the reduced divisor $\cD$ on $\tilde{X}$ defined by the following monomial in the Cox coordinates of $\tilde{X}$:
\begin{equation} \label{eq:monomial_theorem_mutation}
\prod_{p \in \vertices(P)^{\geq 0}} x_p
\prod_{p' \in \vertices(P')^{< 0}} x_{p'}.
\end{equation}
Set  $V = \PP^2_\CC \setminus \{ [1:0:0],[0:1:0]\}$. Consider the closed subscheme $\cX$ of $\tilde{X} \times_{\Spec \CC} V$ defined by the vanishing of the trinomial obtained by varying the three coefficients of
\begin{equation} \label{eq:trinomial_mutations}
\prod_{p \in \vertices(P)^{\geq 0}} x_p^{\langle w, p \rangle} +
\prod_{p' \in \vertices(P')^{< 0}} x_{p'}^{-\langle w, p' \rangle} +
\prod_{f \in \vertices(F)} x_f.
\end{equation}
Then in the diagram
\[
\xymatrix{
\cX \cap (\cD \times_{\Spec \CC} V) \ar[rd] \ar@{^{(}->}[r] & \cX \ar[d] \\
& V
}
\]
the two morphisms with target $V$ are flat and the base change of the diagram to the points $[0 : 1 :-1]$ and $[1:0:-1]$ of $V$ are the closed embeddings $\partial X_P \into X_P$ and $\partial X_{P'} \into X_{P'}$ over $\Spec \CC$, respectively.
\end{theorem}

When $\dim X_P = 2$, a slight variation of this construction was pursued in \cite[Lemma~7]{conjectures}.  The rays of the fan of $\tilde{X}$ in Theorem~\ref{thm:mutations_induce_deformations} have been suggested to us by Thomas Prince. An example of Theorem~\ref{thm:mutations_induce_deformations} is given below.

\begin{example} \label{ex:P2_P114}
	In the lattice $N = \ZZ^2$
	\begin{figure}
		\centering
		\includegraphics[width=7cm]{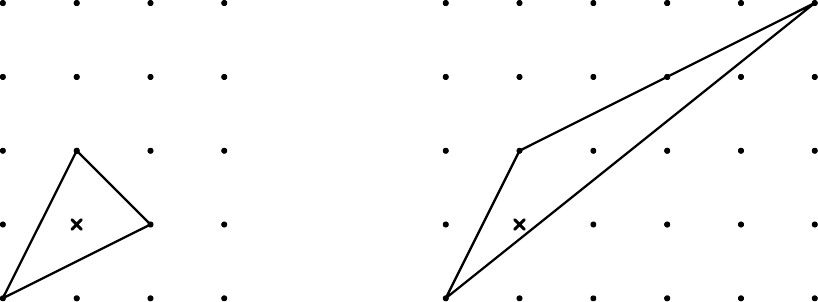}
		\caption{The polygons in Example~\ref{ex:P2_P114} \label{fig:polygons}}
	\end{figure}
	consider the polygons
	\begin{align*}
	P &= \conv{
        \begin{pmatrix}
        1 \\ 0
        \end{pmatrix},
        \begin{pmatrix}
        0 \\ 1
        \end{pmatrix},
        \begin{pmatrix}
        -1 \\ -1
        \end{pmatrix}
}  \\
	P' &= \conv{
                \begin{pmatrix}
        4 \\ 3
        \end{pmatrix},
        \begin{pmatrix}
        0 \\ 1
        \end{pmatrix},
        \begin{pmatrix}
        -1 \\ -1
        \end{pmatrix}
    }
	\end{align*}
	which appear in Figure~\ref{fig:polygons}.
	We have $P' = \mathrm{mut}_{w,F}(P)$ with $w = (-1,2) \in M$ and 
    \[
    F = \conv{
\begin{pmatrix}
0 \\ 0
\end{pmatrix}, 
\begin{pmatrix}
2 \\ 1
\end{pmatrix}
}.
\]
	We see that $X_P = \PP^2$ and $X_{P'} = \PP(1,1,4)$. Following Theorem~\ref{thm:mutations_induce_deformations} we consider the complete fan $\tilde{\Sigma}$ in $\tilde{N} = \ZZ^3$ whose rays are generated by
	\begin{equation*}
	x = \begin{pmatrix}
	0 \\ 1 \\ 0
	\end{pmatrix},
	y =\begin{pmatrix}
	-1 \\ -1 \\ -1
	\end{pmatrix},
	z_0 =\begin{pmatrix}
	0 \\ 0 \\ 1
	\end{pmatrix},
	z_1 = \begin{pmatrix}
	2 \\ 1 \\ 1
	\end{pmatrix}.
	\end{equation*}
	This implies that the toric variety associated to $\tilde{\Sigma}$ is $\tilde{X} = \PP(1,2,1,1)$ with Cox coordinates $x,y,z_0, z_1$. The trinomial \eqref{eq:trinomial_mutations} is $x^2 + y + z_0 z_1$ and the monomial \eqref{eq:monomial_theorem_mutation} is $xy$.
	
Consider $V = \PP^2_\CC \setminus \{[1:0:0],[0:1:0]\}$ with homogeneous coordinates $a,b,c$.
Consider the following closed subschemes of $\PP(1,2,1,1) \times_{\Spec \CC} V$:
	\begin{align*}
	\cB = \{ a x^2 + by + c z_0 z_1 = xy = 0 \} \into \cX = \{ a x^2 + by + c z_0 z_1 = 0 \}.
	\end{align*}
	By Theorem~\ref{thm:mutations_induce_deformations}, the projections $\cX \to V$ and $\cB \to V$ are flat and their fibres over $[0:1:-1]$ (resp.\ $[1:0:-1]$) are $X_P$ and $\partial X_P$ (resp.\ $X_{P'}$ and $\partial X_{P'}$). We notice that the fibres over the closed points of $V$ with $b \neq 0$ are $\PP^2_\CC$ with a reducible cubic.
\end{example}

\begin{lemma} \label{lemma:trinomial_monomial_regular_sequence}
Consider the polynomial ring $S = \CC[x_1, \dots, x_r, y_1, \dots, y_s, z_1, \dots, z_t]$ over $\CC$. Fix $\alpha_1, \dots, \alpha_r, \beta_1, \dots, \beta_s \in \NN$.
 If $a,b,c \in \CC$ are such that $ab \neq 0$ or $c \neq 0$, then
\[
a x_1^{\alpha_1} \cdots x_r^{\alpha_r} + b y_1^{\beta_1} \cdots y_s^{\beta_s} + c z_1 \cdots z_t, \ x_1 \cdots x_r y_1 \cdots y_s
\]
 is a regular sequence in $S$.
\end{lemma}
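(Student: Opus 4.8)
The plan is to exploit that $S=\CC[x_1,\dots,x_r,y_1,\dots,y_s,z_1,\dots,z_t]$ is a unique factorization domain. Write $m_x = x_1^{\alpha_1}\cdots x_r^{\alpha_r}$, $m_y = y_1^{\beta_1}\cdots y_s^{\beta_s}$, $m_z = z_1\cdots z_t$, so that the trinomial is $f := a\,m_x + b\,m_y + c\,m_z$ and the monomial is $g := x_1\cdots x_r\,y_1\cdots y_s$. Since $m_x$, $m_y$, $m_z$ involve pairwise disjoint sets of indeterminates and are nontrivial (as is the case when the lemma is applied, in Theorem~\ref{thm:mutations_induce_deformations}), they are pairwise distinct monomials of $S$; as $(a,b,c)\neq(0,0,0)$ this already forces $f\neq 0$. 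Being a nonzero element of a domain, $f$ is a nonzerodivisor in $S$, so to establish that $f,g$ is a regular sequence it remains only to show that $g$ is a nonzerodivisor on $S/(f)$ (and that $(f,g)\neq S$, which is clear because $g$ vanishes on $\{x_1=0\}$ while $f|_{x_1=0}$ is not a nonzero constant, by the same disjointness together with the hypothesis on $a,b,c$).

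First I would reduce the nonzerodivisor statement to a coprimality statement: if $gh\in(f)$ for some $h\in S$, then $f\mid gh$ in the UFD $S$, and provided $\gcd(f,g)=1$ this forces $f\mid h$, i.e. $\bar h=0$ in $S/(f)$. So it suffices to prove $\gcd(f,g)=1$. Since $g$ is, up to a unit, the product of the $r+s$ distinct indeterminates $x_1,\dots,x_r,y_1,\dots,y_s$, each to the first power, its irreducible factors are precisely these indeterminates, and therefore $\gcd(f,g)=1$ if and only if none of $x_1,\dots,x_r,y_1,\dots,y_s$ divides $f$.

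The only genuine computation is this divisibility check, and it is exactly where the hypothesis ``$ab\neq 0$ or $c\neq 0$'' is used. For an indeterminate $x_i$ one has $x_i\mid f$ iff $f|_{x_i=0}=0$; the substitution $x_i\mapsto 0$ fixes $m_y$ and $m_z$ and sends $m_x$ to $0$ (if $\alpha_i>0$) or to a monomial in the remaining $x$'s (if $\alpha_i=0$), so $f|_{x_i=0}$ is a sum of a polynomial in the $x$'s, a scalar times $m_y$, and a scalar times $m_z$, three pieces in pairwise disjoint indeterminates, among which no cancellation can occur. If $c\neq0$ the term $c\,m_z$ survives, so $f|_{x_i=0}\neq 0$; if $c=0$ then $b\neq 0$, the term $b\,m_y$ survives, and again $f|_{x_i=0}\neq 0$. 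Hence $x_i\nmid f$; exchanging the roles of the $x$'s and the $y$'s (and using $a\neq 0$ when $c=0$) gives $y_j\nmid f$ as well. This yields $\gcd(f,g)=1$ and completes the argument. There is no serious obstacle here: the proof is short once it is phrased through the unique-factorization structure of $S$, and the only point requiring attention is the implicit nondegeneracy (nontriviality, hence pairwise distinctness, of $m_x,m_y,m_z$), which is automatic in the situation of Theorem~\ref{thm:mutations_induce_deformations}.
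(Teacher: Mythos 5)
Your argument is correct and is essentially the paper's own proof, which simply states that the trinomial and the monomial are two coprime elements of a unique factorisation domain; you have just made the coprimality check (no $x_i$ or $y_j$ divides the trinomial, using $ab\neq 0$ or $c\neq 0$) and the standard UFD reduction explicit. Your caveat about nontriviality of the monomials $m_x, m_y, m_z$ is a fair observation — the degenerate case where all exponents vanish is implicitly excluded in the paper as well, and it cannot occur in the application in Theorem~\ref{thm:mutations_induce_deformations}.
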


\begin{proof} They are two coprime elements in a unique factorisation domain.
\end{proof}

\begin{proof}[Proof of Theorem~\ref{thm:mutations_induce_deformations}]
Consider the lattice $N_0 = N \oplus \ZZ e_0$ and the cone $\tau = \cone{P+e_0} \subseteq (N_0)_\RR$, which is associated to the ample $\QQ$-Cartier $\ZZ$-divisor $\partial X_P$ on $X_P$ via Lemma~\ref{lemma:polarised_projective_varieties}. Let $\{ G_h \}$ be the collection of lattice polytopes in Definition~\ref{def:factor}. Consider the polytope
\begin{equation*}
G = \conv{ \bigcup_{\min_P w \leq h < 0} \frac{G_h + e_0}{-h} }.
\end{equation*}
It is not difficult to see that $(G+F, G,F,w)$ is a $\partial$-deformation datum for $(N_0, \tau)$ with $w \in M \subseteq M_0$.

Let us prove only (vi). Each vertex of the polyhedron $\tau \cap \{ v + k e_0 \in (N_0)_\RR \mid \langle w, v \rangle = -1 \}$ is of the form $- \langle w, p \rangle\inv (p+e_0)$ for some $p \in \vertices(P)^{<0}$; by \eqref{eq:conditions_factor} there exist $g \in G_{\langle w, p \rangle}$ and $f \in F$ such that $p = g - \langle w, p \rangle f$; this implies that
\begin{equation*}
- \frac{p+e_0}{\langle w, p \rangle} = \frac{g + e_0}{- \langle w, p \rangle} + f \in G + F.
\end{equation*}

Now consider the cone $\tilde{\tau} = \cone{\tau, G-e_1, F+e_1}$ in $\tilde{N}_0 = N \oplus \ZZ e_0 \oplus \ZZ e_1$, as in Notation~\ref{not:tilde_N_tilde_sigma_tilde_w}. By using \eqref{eq:conditions_factor} it is not difficult to show that $\tilde{\tau}$ is generated by $p + e_0$ for $p \in \vertices(P)^{\geq 0}$,
$p' + e_0 + \langle w , p' \rangle e_1$ for $p' \in \vertices(P)^{<0}$, and 
$f + e_1$ for $f \in \vertices(F)$. It is not difficult to show that these are the rays of $\tilde{\tau}$. This implies that the polytope $\tilde{Q}$ defined in Theorem~\ref{thm:mutations_induce_deformations} coincides with $\tilde{\tau}^\vee \cap e_0\inv(1)$ and that the rays of $\tilde{\Sigma}$ are the ones written down in the statement of the theorem.

Consider $a,b,c$ homogeneous coordinates on $V \subseteq \PP^2_\CC$, consider the closed subscheme $\cX$ of $\tilde{X} \times_{\Spec \CC} V$ defined by the homogeneous ideal generated by the trinomial
\begin{equation} \label{eq:trinomial_abc_mutation}
a \prod_{p \in \vertices(P)^{\geq 0}} x_p^{\langle w, p \rangle} +
b \prod_{p' \in \vertices(P')^{< 0}} x_{p'}^{-\langle w, p' \rangle} +
c \prod_{f \in \vertices(F)} x_f.
\end{equation}
Let $\cD$ be reduced effective divisor of $\tilde{X}$ defined by the monomial \eqref{eq:monomial_theorem_mutation}. Consider $\cB = \cX \cap (\cD \times_{\Spec \CC} V) \into \tilde{X} \times_{\Spec \CC} V$. Therefore the diagram in Theorem~\ref{thm:mutations_induce_deformations} is $\cB \into \cX \to V$.

If we ignore the coefficients, the trinomial \eqref{eq:trinomials_theorem_deformations_projective_pairs} in this particular case becomes  the trinomial in Theorem~\ref{thm:mutations_induce_deformations} and the monomial \eqref{eq:monomial_theorem_deformations_projective_pairs} becomes the monomial \eqref{eq:monomial_theorem_mutation}. By Theorem~\ref{thm:deformations_projective_toric_pairs}, the fibres of $\cX \to V$ and $\cB \to V$ over the point $[0:1:-1]$ are $X_P$ and $\partial X_P$.

Now we prove that $\cX$ and $\cB$ are flat over $V$. We pick an arbitrary closed point $v_0 = [a:b:c]$ of $V$. By Lemma~\ref{lemma:trinomial_monomial_regular_sequence}, the trinomial~\eqref{eq:trinomial_abc_mutation} and the monomial~\eqref{eq:monomial_theorem_mutation} form a regular sequence
 in the Cox ring of $\tilde{X}$. By Lemma~\ref{lemma:complete_intersection_flatness_neighbourhood} and Lemma~\ref{lemma:flatness_sheafification_graded_module}, we have that the morphisms $\cX \to V$ and $\cB \to V$ become flat when we restrict them to every infinitesimal neighbourhood of the point $v_0$ in $V$. Since $\cX$ and $\cB$ are proper over $V$, by 
 Lemma~\ref{lemma:flatness_from_formal_to_local_proper} we have that the morphisms $\cX \to V$ and $\cB \to V$ become flat when we base change them to $\Spec \cO_{V,v_0}$. By Lemma~\ref{lemma:flatness_neighbourhood} there exists an open neighbourhood $U$ of $v_0$ in $V$ such that $\cX \vert_U$ and $\cB \vert_U$ are flat over $U$. Therefore $\cX$ and $\cB$ are flat over $V$.

It remains to show that $X_{P'}$ and $\partial X_{P'}$ are the fibres of $\cX \to V$ and $\cB \to V$ over $[1:0:-1]$. This can be seen by using the inverse mutation from $P'$ to $P$ and by applying the automorphism of $N \oplus \ZZ e_1$ given by $v + ke_1 \mapsto v + (k + \langle w, v \rangle) e_1$.
\end{proof}

\bibliography{Biblio}


\end{document}